\documentclass[preprint,3p, 12pt, times]{elsarticle}
\usepackage[T1]{fontenc}
\usepackage{lmodern}
\usepackage{microtype}
\usepackage{amsmath,amssymb,amsthm,mathtools}
\usepackage{tikz-cd}
\usepackage{enumitem}
\usepackage[unicode,colorlinks=true,linkcolor=blue!45!black,citecolor=blue!45!black,urlcolor=blue!45!black]{hyperref}
\hypersetup{pdftitle={The reflective hull of the two-element chain in DCPO: properness, maximal Gamma-faithfulness, and an internal reflection formula},pdfauthor={Xulong He, Zhenchao Lyu, Yuxu Chen, Hui Kou},pdfkeywords={directed-complete partial order, reflective subcategory, equability, Scott topology, Gamma-faithfulness, order sobrification}}
\usepackage[nameinlink,capitalise,noabbrev]{cleveref}

\journal{Topology and its Applications}
\biboptions{numbers,sort&compress}

\allowdisplaybreaks
\numberwithin{equation}{section}

\usepackage{aliascnt}

\newtheorem{theorem}{Theorem}[section]

\newaliascnt{lemma}{theorem}
\newtheorem{lemma}[lemma]{Lemma}
\aliascntresetthe{lemma}

\newaliascnt{proposition}{theorem}
\newtheorem{proposition}[proposition]{Proposition}
\aliascntresetthe{proposition}

\newaliascnt{corollary}{theorem}
\newtheorem{corollary}[corollary]{Corollary}
\aliascntresetthe{corollary}

\newaliascnt{claim}{theorem}

\aliascntresetthe{claim}

\newaliascnt{hypothesis}{theorem}

\aliascntresetthe{hypothesis}

\theoremstyle{definition}

\newaliascnt{definition}{theorem}
\newtheorem{definition}[definition]{Definition}
\aliascntresetthe{definition}

\newaliascnt{example}{theorem}

\aliascntresetthe{example}

\theoremstyle{remark}

\newaliascnt{remark}{theorem}
\newtheorem{remark}[remark]{Remark}
\aliascntresetthe{remark}

\usepackage[nameinlink,capitalise,noabbrev]{cleveref}

\newcommand{\DCPO}{\mathbf{DCPO}}
\newcommand{\Rtwo}{\mathcal R_2}

\newcommand{\Irr}{\operatorname{Irr}}
\newcommand{\ReflHull}{\operatorname{ReflHull}}

\newcommand{\Eq}{\operatorname{Eq}}

\newcommand{\cl}{\operatorname{cl}}
\newcommand{\id}
{\operatorname{id}}

\newcommand{\Supp}{\operatorname{Supp}}

\newcommand{\down}{\mathord{{\downarrow}}}
\newcommand{\WDomi}{\mathbf{WDomi}}
\newcommand{\DOMI}{\mathbf{DOMI}}
\newcommand{\two}{2}
\newcommand{\N}{\mathbb{N}}
\newcommand{\J}{\mathbb{J}}
\newcommand{\LJ}{L\mathbb{J}}

\newcommand{\Gm}{\Gamma}

\begin{document}

\begin{frontmatter}

\title{The reflective hull of the two-element chain in \(\DCPO\): properness, maximal \(\Gamma\)-faithfulness, and an internal reflection formula}

\author[scu]{Xulong He}
\ead{hexulong@scu.edu.cn}
\author[scu]{Zhenchao Lyu}
\ead{zhenchaolyu@scu.edu.cn}

\author[scu]{Yuxu Chen}
\ead{chenyuxu@scu.edu.cn}

\author[scu]{Hui Kou}
\ead{kouhui@scu.edu.cn}

\affiliation[scu]{organization={School of Mathematics, Sichuan University},
  city={Chengdu},
  country={China}}

\begin{abstract}
Let \(\DCPO\) be the category of dcpos and Scott-continuous maps, and let \(\Rtwo\) be the reflective hull of the two-element chain \(2\). 
We prove that \(\Rtwo\) is the least non-discrete proper reflective full subcategory of \(\DCPO\). 
It is closed under limits and Skula-closed sub-dcpos and contains every sober dcpo. 
We show that \(\Rtwo\) is the maximal \(\Gamma\)-faithful full
subcategory of \(\DCPO\) that strictly contains weakly dominated dcpos. 
Finally, we give the concrete construction of \(2\)-reflection internally by a transfinite iteration and a criterion for reflective full
subcategories of \(\DCPO\), analogous to the Keimel--Lawson conditions for
\(T_0\) topological spaces.
\end{abstract}

\begin{keyword}
dcpo \sep reflective subcategory \sep equability
\sep Scott topology \sep \(\Gamma\)-faithfulness \sep order sobrification
\MSC[2020] 06B35 \sep 18A40 \sep 54D10 \sep 54H10 \sep 68Q55
\end{keyword}

\end{frontmatter}

\section{Introduction}

In domain theory and non-Hausdorff topology, d-spaces, well-filtered spaces, and sober spaces are three important classes of topological spaces \cite{gierz2003,goubault2013}. With continuous maps as morphisms, let $\mathbf{Top_0}$, $\mathbf{Sob}$, $\mathbf{Top_d}$, and $\mathbf{Top_w}$ denote the categories of $T_0$ spaces, sober spaces, d-spaces, and well-filtered spaces, respectively. These are full subcategories of $\mathbf{Top_0}$.
It is well known that every $T_0$ topological space admits a sobrification \cite{goubault2013}. Consequently, the category $\mathbf{Sob}$ of sober spaces is a reflective subcategory of $\mathbf{Top_0}$. Using the d-closure, Wyler proved in 1981 that $\mathbf{Top_d}$ is reflective in $\mathbf{Top_0}$ \cite{Wyler1981}. In 1999, Ershov observed in \cite{ershov1999} that the d-completion of a space $X$, namely its d-reflection, can also be obtained by adjoining to $X$ the closures of directed subsets and then repeating this procedure by transfinite induction.

In 2009, Keimel and Lawson \cite{Keimel2009} proved that if \(\mathcal K\) is a full subcategory of $\mathbf{Top_0}$ containing $\mathbf{Sob}$ and satisfying certain conditions, subsequently known as the Keimel--Lawson conditions, then \(\mathcal K\) is a reflective subcategory of $\mathbf{Top_0}$. In 2019, Wu, Xi, Xu, and Zhao applied the Keimel--Lawson approach to prove that $\mathbf{Top_w}$ is reflective in $\mathbf{Top_0}$ \cite{Wu2019}. In the same year, Shen, Xi, Xu, and Zhao adapted Ershov's construction of the d-completion of a $T_0$ space to obtain a construction of the well-filtered reflection of a $T_0$ space \cite{Shen2019}. In subsequent work, Xu provided a direct method for constructing the \(\mathcal K\)-reflection of a $T_0$ space \cite{Xu2020}.
Ershov \cite{Ershov2022} and Shen, Xi, and Zhao \cite{Shen2024} proved that if a full subcategory \(\mathcal K\) of $\mathbf{Top_0}$ contains a non-$T_1$ space, then \(\mathcal K\) is reflective if and only if it satisfies the Keimel--Lawson conditions. This result shows that, in a certain sense, the Keimel--Lawson conditions are both necessary and sufficient for reflectivity. The studies described above provide a range of methods and techniques for constructing reflection functors in the category of $T_0$ topological spaces. In particular, the categories of sober spaces, d-spaces, and well-filtered spaces are all reflective subcategories of $\mathbf{Top_0}$.
\par

Directed-complete partial orders are among the basic objects of domain theory. Directed suprema model limits of increasing computations, while the Scott topology is the canonical topology compatible with those suprema \cite{sc-4}. The category \(\DCPO\) of dcpos and Scott-continuous maps is complete, cocomplete, and cartesian closed: for dcpos \(A,B\), the hom-object \([A\to B]\) is the dcpo of Scott-continuous maps ordered pointwise; see, for example, \cite{gierz2003,goubault2013}.
In their study of observationally induced algebras, Battenfeld, Keimel, and Streicher introduced repletion in a locally dcpo-enriched setting \cite{bks2014}. Their definition requires restriction between function dcpos to be an isomorphism, equivalently, the unique-extension operator to preserve the pointwise order.
The two-element chain \(\Sigma=2=\{0<1\}\) plays the role of the two-element chain in \(\DCPO\). Moreover, if a reflective full subcategory contains a non-discrete dcpo, then it contains \(2\) as a retract. This motivates the study of the reflective hull of \(2\).

The main purpose of this paper is to determine the categorical and order-topological position of \(\Rtwo=\ReflHull_{\DCPO}(2)\) ($R_2$ is the full subcategory of all \(2\)-complete dcpos which is a reflective subcategory of \(\DCPO\) \cite{bks2014} and $\mathrm{ReflHull}_{\mathbf{DCPO}}(R)$ denotes the least reflective
full subcategory of $\mathbf{DCPO}$ containing \(R\)). We prove that it is minimal and maximal in two different senses: it is the least non-discrete reflective full subcategory of \(\DCPO\), and it is maximal among full subcategories on which the Scott-closed-set lattice determines objects up to isomorphism. The latter result is closely related to the Ho--Zhao problem, which asks
whether \(\DCPO\) is $\Gamma$-faithful, that is, whether
\[
    \Gamma(D) \cong \Gamma(E)
    \ \Longrightarrow\ 
    D \cong E,
\]
for arbitrary dcpos $D$ and $E$, where
$\Gamma(D)$ denotes the lattice of Scott-closed subsets of $D$ with inclusion order. Ho, Goubault-Larrecq, Jung, and Xi
answered this problem negatively in~\cite{ho2018}
by constructing non-isomorphic dcpos with isomorphic lattices of
Scott-closed sets. Their construction will also be used to show
that $\mathcal{R}_2$ is a proper subcategory of \(\DCPO\).
In contrast to the negative answer for the whole category \(\DCPO\), we show
that $\mathcal{R}_2$ is $\Gamma$-faithful and, moreover, maximal
among $\Gamma$-faithful full subcategories of \(\DCPO\). Thus $\mathcal{R}_2$
provides a canonical maximal subclass of \(\DCPO\) on which the Scott-closed-set
lattice determines dcpos up to isomorphism. We also construct the \(2\)-reflection internally in the order sobrification and prove a relative hull criterion over \(\Rtwo\).

A further comparison concerns the weakly dominated dcpos of
Miao--Hou--Jia--Li \cite{miao2024}. We prove that every weakly dominated dcpo belongs
to \(\Rtwo\), and also that every \(\Gamma\)-unique dcpo (a dcpo \(P\) is called \emph{\(\Gamma\)-unique}
if, for every dcpo \(Q\),
    $\Gamma(P)\cong \Gamma(Q)
    \Longrightarrow
    P\cong Q$) belongs
to \(\Rtwo\). Applying the latter result to the stratified Johnstone dcpo \(\LJ\) of Xu and Zhao, and using the structural properties established in \cite{XuZhao2019}, we obtain \(\LJ\in\Rtwo\setminus\WDomi\), and hence \(\WDomi\subsetneq\Rtwo\), where \(\WDomi\) is the full subcategory of weakly dominated dcpos.

We also construct the \(2\)-reflection internally in the order sobrification and prove a relative hull criterion over \(\Rtwo\). We further give an internal description of the \(2\)-reflection.
Starting from the order sobrification of a dcpo \(D\), we construct the
reflection by a transfinite pruning process which removes precisely the
points not forced by their traces on \(D\).  The stable object obtained
in this way is shown to be canonically isomorphic to \(\mathbf{\mathbf{L_2}}D\).  This
provides a concrete order-theoretic formula for the reflector and
clarifies how the points added by the \(2\)-reflection arise.

Finally, we establish a Keimel--Lawson type criterion for reflective
full subcategories of \(\DCPO\).  Since every non-discrete reflective
full subcategory contains \(\Rtwo\), the criterion describes further
reflections by taking suitable sub-dcpos of the \(2\)-reflection.  This
gives a domain-theoretic analogue of the Keimel--Lawson characterization
for \(T_0\) spaces.

The paper is organized as follows.
In \cref{sec:preliminaries}, we recall the basic notions of reflective
subcategories, equability which is the reflective subcategory of \(\DCPO\) defined by \cite[Definition~2.2]{bks2014}, Scott topology, and order sobrification.
We prove that the reflective hull of a dcpo \(R\) can be characterized by the
class of \(R\)-replete dcpos, which provides the main tool for studying
\(\Rtwo\).
In \cref{sec:proper}, we investigate the categorical position of
\(\Rtwo\). We prove that \(\Rtwo\) is a proper reflective full subcategory of
\(\DCPO\) and is the least reflective full subcategory containing a
non-discrete dcpo. The Ho--Zhao example is used to show that this inclusion
is strict.
In \cref{sec:closure}, we study closure properties of \(\Rtwo\). We show
that \(\Rtwo\) is closed under limits and Skula-closed sub-dcpos. As
applications, we prove that specialization orders of sober spaces,
Scott-sober dcpos, order sobrifications, complete lattices, and
bounded-complete dcpos all belong to \(\Rtwo\).
In \cref{sec:gamma}, we study the reconstruction problem determined by
Scott-closed-set lattices. We prove that, although \(\DCPO\) is not
\(\Gamma\)-faithful in general, the subcategory \(\Rtwo\) is
\(\Gamma\)-faithful and maximal among the \(\Gamma\)-faithful full
subcategories of \(\DCPO\).
In \cref{sec:weakdom}, we compare \(\Rtwo\) with weakly dominated dcpos. We
prove that every weakly dominated dcpo belongs to \(\Rtwo\), and use the
stratified Johnstone dcpo to establish the strict inclusion
$\mathrm{WDomi}\subsetneq \Rtwo$.
In \cref{sec:internal-reflection}, we give an explicit construction of the
\(2\)-reflection inside the order sobrification by means of a transfinite
iteration. This provides an internal description of the reflection process
and identifies the points added by the reflection.
Finally, in \cref{sec:KL}, we establish a criterion for reflective full
subcategories of \(\DCPO\), analogous to the Keimel--Lawson conditions for
\(T_0\) topological spaces. This gives a characterization of reflectivity in
\(\DCPO\) and extends the role of Keimel--Lawson type conditions from
topological spaces to the domain-theoretic setting.

\section{Preliminaries and equability}\label{sec:preliminaries}

We assume familiarity with the basic knowledge of order and topology as laid out in \cite{gierz2003}
or \cite{goubault2013}. Throughout the paper, directed subsets are nonempty, \(\N=\{1,2,\ldots\}\), and every full subcategory is understood to be closed under isomorphisms. For dcpos \(A,B\), the dcpo of Scott-continuous maps ordered pointwise is denoted by \([A\to B]\). We write \(\Sigma D=(D,\sigma(D))\) for the Scott space of a dcpo \(D\), \(\sigma(D)\) is the Scott topology of $D$, and \(\Omega X\) for the specialization order of a \(T_0\) space \(X\).

The purpose of this section is to relate reflective hulls to equability.  The main result is \cref{thm:replete-hull}, which identifies
$\operatorname{ReflHull}_{\mathbf{DCPO}}(R)$ with the full subcategory
of $R$-complete dcpos.  This characterization will be used throughout
the paper, mainly for $R=2$ (2 is the two-element chain).

\subsection{Equability and observationally induced reflection}

\begin{definition}
\label{def:reflection}
Let \(\mathcal K\subseteq\DCPO\) be a full subcategory.
We say that \(\mathcal K\) is \emph{reflective} if, for every dcpo \(D\),
there exist \(\mathbf{L}D\in\mathcal K\) and a Scott-continuous map
\[
  \kappa_D:D\longrightarrow \mathbf{L}D
\]
such that, for every \(K\in\mathcal K\) and every Scott-continuous map
\(f:D\to K\), there exists a unique Scott-continuous map
\(\widetilde f:\mathbf{L}D\to K\) for which the diagram
\[
\begin{tikzcd}[column sep=large,row sep=large]
D \arrow[r,"\kappa_D"] \arrow[dr,"f"']
  & \mathbf{L}D \arrow[d,dashed,"{!\widetilde f}"] \\
  & K
\end{tikzcd}
\]
commutes; that is,
\[
  f=\widetilde f\circ\kappa_D.
\]
The map \(\kappa_D\) is called the \(\mathcal K\)-reflection unit of \(D\),
and \(\mathbf{L}D\) is called the \(\mathcal K\)-reflection of \(D\).

Equivalently, for every \(K\in\mathcal K\), composition with
\(\kappa_D\) induces a bijection
\[
  \mathcal K(\mathbf{L}D,K)
  \longrightarrow
  \DCPO(D,K),
  \ 
  g\longmapsto g\circ\kappa_D.
\]

We shall also use the following stronger notion. A reflection is called enriched if this bijection is an order isomorphism.

Unless explicitly stated otherwise, ``reflective'' means ordinary reflective. We use ``enriched reflective'' when the induced hom-set bijections are order isomorphisms. In fact, \cref{rem:2.7} can illustrate that every reflective full subcategory of \(\DCPO\) is automatically enriched reflective.

\end{definition}

\begin{lemma}\label{lem:reflective-closure} (1) 
{\cite[Exercise 4.12.8]{goubault2013}}
\(\DCPO\) is a complete category, whose products are given by Cartesian products
equipped with the pointwise order.

More precisely, let $(D_i)_{i\in I}$ be an arbitrary family of dcpos. 
Its product is the Cartesian product $\prod_{i\in I}D_i$ equipped with the 
product order, where
\[
x \leq y \ \Longleftrightarrow\  x_i \leq_i y_i
\]
for every $i\in I$.

In \(\DCPO\), the equalizer of two morphisms 
$g_1,g_2:D_1\to D_2$ is the subset $[g_1=g_2]$ of $D_1$,
defined by
\[
[g_1=g_2]=\{z\in D_1\mid g_1(z)=g_2(z)\}.
\]
It inherits the suborder induced from $D_1$, together with the canonical
inclusion map
\[
\pi:[g_1=g_2]\to D_1 .
\]

\item (2) {\cite[Lemma~5.6.6]{Riehl2017}} A reflective full subcategory of \(\DCPO\) is closed under retracts and under limits computed in \(\DCPO\).
\end{lemma}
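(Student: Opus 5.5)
The plan is to verify the two parts of \cref{lem:reflective-closure} separately. Part (1) is a concrete computation in \(\DCPO\). Part (2) is a general categorical fact specialised to \(\DCPO\).

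For part (1), I would show in turn that the pointwise-ordered Cartesian product and the set \([g_1=g_2]\) are dcpos, that the canonical maps are Scott-continuous, and that they have the universal property. Together with the standard fact that products plus equalizers give all small limits, this yields completeness.

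\emph{Products.} If \(\Delta\subseteq\prod_i D_i\) is directed, each projection \(\pi_i(\Delta)\) is directed in \(D_i\). Hence \(s=(\sup\pi_i(\Delta))_i\) exists, and it is the supremum of \(\Delta\) in the pointwise order. This shows at once that the product is a dcpo and that every projection preserves directed suprema. Given Scott-continuous maps \(f_i:E\to D_i\), the tupling \(\langle f_i\rangle\) is the unique function with \(\pi_i\circ\langle f_i\rangle=f_i\). It is monotone and preserves directed suprema because suprema are computed coordinatewise.

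\emph{Equalizers.} For a directed \(\Delta\subseteq[g_1=g_2]\), Scott-continuity gives \(g_1(\sup\Delta)=\sup g_1(\Delta)=\sup g_2(\Delta)=g_2(\sup\Delta)\). So \([g_1=g_2]\) is closed under directed suprema computed in \(D_1\), and it is a dcpo whose inclusion map is Scott-continuous. Any Scott-continuous \(h:E\to D_1\) with \(g_1h=g_2h\) factors uniquely through \([g_1=g_2]\) as a set map. The corestriction is Scott-continuous because directed suprema in the subset agree with those in \(D_1\).

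For part (2), let \(\mathcal K\) be reflective with unit \(\kappa\).

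\emph{Retracts.} Suppose \(r\circ s=\id_A\) with \(s:A\to K\), \(r:K\to A\) and \(K\in\mathcal K\). Extend \(s\) to \(\widetilde s:\mathbf LA\to K\), so that \(\widetilde s\kappa_A=s\). Then \(r\widetilde s\,\kappa_A=\id_A\), so \(\kappa_A\) is a split mono. Next, \(\kappa_A r\widetilde s\) and \(\id_{\mathbf LA}\) both extend \(\kappa_A\) along \(\kappa_A\), so uniqueness forces them to be equal. Hence \(\kappa_A\) is an isomorphism, and \(A\in\mathcal K\) because full subcategories are assumed closed under isomorphisms.

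\emph{Limits.} Let \(L=\lim K_j\) in \(\DCPO\) with all \(K_j\in\mathcal K\). Each limit projection \(p_j\) extends to \(\widetilde{p_j}:\mathbf LL\to K_j\). For each transition map \(u\), the maps \(u\widetilde{p_j}\) and \(\widetilde{p_{j'}}\) agree after composing with \(\kappa_L\), so uniqueness makes them equal. Thus the \(\widetilde{p_j}\) form a cone, which induces \(t:\mathbf LL\to L\) with \(t\kappa_L=\id_L\). The retract argument then shows that \(\kappa_L\) is an isomorphism, so \(L\in\mathcal K\).

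The main obstacle is bookkeeping rather than anything conceptual. At each step one must invoke the uniqueness of extensions into objects of \(\mathcal K\), not into \(\mathbf LA\) merely as an arbitrary dcpo. This is legitimate because \(\mathbf LA\in\mathcal K\).
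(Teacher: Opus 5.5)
Your proof is correct: the paper gives no argument of its own here, only citations to Goubault-Larrecq's Exercise~4.12.8 and Riehl's Lemma~5.6.6, and you have written out the standard verification behind those references. The one step you leave implicit is \(t\circ\kappa_L=\id_L\); it follows from the uniqueness part of the limit's universal property, since \(p_j\circ t\circ\kappa_L=\widetilde{p_j}\circ\kappa_L=p_j\) for every \(j\).
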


\begin{definition}[{\cite[Definition~2.1]{bks2014}}]
Let \(e:A\to B\) be a Scott-continuous map and let \(C\) be a dcpo.
We say that \(e\) is \emph{\(C\)-equable} if the precomposition map
\[
e^*:[B\to C]\longrightarrow[A\to C],
\ 
g\longmapsto g\circ e,
\]
is an isomorphism of dcpos.
\end{definition}

\begin{definition}[{\cite[Definition~2.2]{bks2014}}]
Fix a dcpo \(R\), and define
\[
  E_R=
  \left\{e:A\to B\ \middle|\
  e^*:[B\to R]\cong[A\to R]
  \text{ in }\DCPO
  \right\}.
\]
A dcpo \(C\) is called \emph{\(R\)-complete} or \emph{\(R\)-replete} if every morphism in \(E_R\) is \(C\)-equable. The full subcategory of all \(R\)-complete dcpos is denoted by \(E_R^{\perp}\). 

In particular, when \(R=2\), we write
\(
\mathcal{R}_2=E_2^{\perp}
\)
for the reflective hull of the two-element chain \(2\) in \(\DCPO\) after \cref{sec:proper}.
\end{definition}

The following is the empty-signature specialization of \cite[Theorem~2.3 and Proposition~2.7]{bks2014}.

\begin{theorem}[{\cite[Theorem~2.3 and Proposition~2.7]{bks2014}}]\label{thm:BKS}
For every dcpo \(R\), the \(R\)-complete dcpos form a reflective full subcategory of \(\DCPO\). The reflection unit
\(
  r_D:D\longrightarrow R(D)
\)
is \(R\)-equable.
\end{theorem}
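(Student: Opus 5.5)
The plan is to build the reflection as an orthogonality hull and then read off equability of the unit from its construction, following the strategy of \cite{bks2014} specialised to the empty signature.

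\emph{Closure properties.} First I check that \(E_R^{\perp}\) is closed in \(\DCPO\) under products and equalizers, using the explicit limits of \cref{lem:reflective-closure}(1). Exponentiation commutes with limits: \([B\to\prod_i C_i]\cong\prod_i[B\to C_i]\), and \([B\to[g_1=g_2]]\) is the equalizer of the induced maps \([B\to D_1]\rightrightarrows[B\to D_2]\). These identifications are natural in \(B\). Hence if \(e\in E_R\) and \(e^*\) is an order isomorphism for every \(C_i\), it is also one for \(\prod_i C_i\) and for \([g_1=g_2]\). Clearly \(R\in E_R^{\perp}\).

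\emph{Construction of the reflection.} For a dcpo \(D\), I would try the standard ``double dual'' construction. Consider the evaluation map
\[
\mathrm{ev}:D\to R^{[D\to R]},\qquad \mathrm{ev}(x)(h)=h(x).
\]
Let \(R(D)\) be the intersection of all sub-dcpos of \(R^{[D\to R]}\) that contain \(\mathrm{ev}(D)\) and are equalizers of pairs of maps into objects of \(E_R^{\perp}\). This intersection is itself a limit of such equalizers, so it lies in \(E_R^{\perp}\) by the first step.

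The universal property then splits into two parts.

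\emph{Uniqueness.} If two maps \(g_1,g_2:R(D)\to K\) with \(K\in E_R^{\perp}\) agree on \(\mathrm{ev}(D)\), then \([g_1=g_2]\) is one of the intersected sets, so it contains \(R(D)\), giving \(g_1=g_2\).

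\emph{Existence.} For existence the key step is that the corestricted unit \(r_D:D\to R(D)\) belongs to \(E_R\). Granting this, for \(K\in E_R^{\perp}\) the map \(r_D^*:[R(D)\to K]\to[D\to K]\) is an isomorphism by the definition of \(R\)-completeness. So each \(f:D\to K\) extends uniquely, which gives reflectivity and, as a bonus, the enriched version.

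\emph{Main obstacle.} The hardest step is proving \(r_D^*:[R(D)\to R]\to[D\to R]\) is an isomorphism.
\begin{itemize}
\item Surjectivity is easy: \(h\in[D\to R]\) extends to the projection \(\pi_h\) restricted to \(R(D)\).
\item Injectivity follows from the uniqueness argument above, since \(R\in E_R^{\perp}\).
\item Order reflection is the delicate point. Suppose \(g_1\circ r_D\le g_2\circ r_D\). The set \(\{z\mid g_1(z)\le g_2(z)\}\) is not obviously an equalizer. I would rewrite it as the equalizer of \(z\mapsto g_1(z)\vee' g_2(z)\) and \(g_2\), computed in \([2\to R]\) or in the dcpo of pairs. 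Concretely, it is the set of \(z\) such that \((g_1(z),g_2(z))\) lies in the order relation \(\le_R\subseteq R\times R\), and \(\le_R\cong[2\to R]\). Then I would check that the inclusion \(\le_R\hookrightarrow R\times R\) is an equalizer of two maps into an object of \(E_R^{\perp}\).
\end{itemize}
Since \([2\to R]\) is a limit-built object, it lies in \(E_R^{\perp}\) provided \(2\) interacts well with \(R\). If this fails, I would instead enlarge the intersecting family to all sub-dcpos of the form \(\{z\mid (g_1(z),g_2(z))\in{\le_K}\}\). These are also limits, namely pullbacks of \(\le_K\cong[2\to K]\). In that case I need \([2\to K]\in E_R^{\perp}\) whenever \(K\in E_R^{\perp}\). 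This holds because \([B\to[2\to K]]\cong[2\times B\to K]\) and \(2\times e\) remains in \(E_R\), which I would verify by currying.
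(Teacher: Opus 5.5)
Your plan follows the Battenfeld--Keimel--Streicher construction, which the paper imports without proof. As written, however, it has a gap exactly at the step you flag. Since your existence argument rests entirely on $r_D\in E_R$, the whole theorem hinges on that step.

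\textbf{The main gap.} Order reflection for $r_D^*$ requires $[2\to R]$ (equivalently $\le_R\subseteq R\times R$) to be $R$-complete, and you leave this conditional. Neither of your suggested justifications works: $[2\to R]$ is a cotensor, not obviously a conical limit of copies of $R$, and the rewrite via $g_1\vee' g_2$ needs joins that $R$ need not have. Your fallback is circular. By currying, $[2\times B\to R]\cong[B\to[2\to R]]$, so ``$2\times e\in E_R$ for all $e\in E_R$'' is literally the statement $[2\to R]\in E_R^{\perp}$. It only reduces a general $K$ to the case $K=R$, which is the case in doubt.

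The missing observation is short and uses the fact that equability means an \emph{order} isomorphism. Take $e:A\to B$ in $E_R$ and $K\in E_R^{\perp}$. A Scott-continuous map $B\to[2\to K]$ is exactly a pair $f\le g$ in $[B\to K]$, ordered componentwise. Since $e^*:[B\to K]\to[A\to K]$ and its inverse are both monotone, $e^*$ carries such pairs bijectively onto the pairs $f'\le g'$ in $[A\to K]$, preserving and reflecting the componentwise order. Hence $[2\to K]\in E_R^{\perp}$. Then $\{z\in R(D): g_1(z)\le g_2(z)\}$ is the pullback of $[2\to R]\hookrightarrow R\times R$ along $(g_1,g_2)$, which is a limit of $R$-complete dcpos.

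\textbf{A smaller defect: the intersected family.} You intersect sub-dcpos that are equalizers of pairs of maps defined on all of $R^{[D\to R]}$. But the set $[g_1=g_2]$ in your uniqueness step, and the inserter set above, are cut out by maps defined only on $R(D)$. They are therefore not visibly members of your family, and injectivity of $r_D^*$ does not follow as written.

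To fix this, define $R(D)$ as the intersection of all sub-dcpos of $R^{[D\to R]}$ that contain $\mathrm{ev}(D)$ and are themselves $R$-complete. This intersection is a wide pullback of inclusions, hence $R$-complete by your first step. Both $[g_1=g_2]$ and $\{z: g_1(z)\le g_2(z)\}$ are $R$-complete sub-dcpos containing $\mathrm{ev}(D)$, being limits of $R$-complete dcpos, so each contains $R(D)$.

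With these two repairs, your argument yields both reflectivity and the $R$-equability of the unit.
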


In the sequel \(\ReflHull_{\DCPO}(R)\) means the least reflective full subcategory containing \(R\). The argument below also shows
that the resulting reflection is enriched.

\begin{lemma}
\label{lem:retract}
If \(D\) is a non-discrete dcpo, then \(2\) is a retract of \(D\).
\end{lemma}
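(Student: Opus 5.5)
Since \(D\) is non-discrete, its order is not the identity relation, so there are points \(a,b\in D\) with \(a<b\). The plan is to build a Scott-continuous section \(s:2\to D\) and a Scott-continuous retraction \(r:D\to 2\) with \(r\circ s=\id_2\).

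For the section, put \(s(0)=a\) and \(s(1)=b\). This map is monotone. Since \(2\) is finite, every directed subset of \(2\) contains its supremum, so \(s\) preserves directed suprema and is Scott-continuous.

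For the retraction, take the Scott-closed set \(\down a\). It is a lower set, and if a directed set \(\Delta\subseteq\down a\) is given, then \(a\) is an upper bound of \(\Delta\), so \(\sup\Delta\le a\). Define
\[
r(x)=\begin{cases}0, & x\le a,\\ 1, & x\not\le a.\end{cases}
\]
This is the characteristic map of the Scott-open set \(D\setminus\down a\), viewed as a map into \(2\). It is Scott-continuous because the Scott-open sets of \(2\) are \(\emptyset\), \(\{1\}\) and \(2\), and their preimages \(\emptyset\), \(D\setminus\down a\) and \(D\) are all Scott-open. Equivalently, \(r\) is monotone, and if \(\sup\Delta\not\le a\) then some element of \(\Delta\) is not below \(a\), since otherwise \(\sup\Delta\le a\); hence \(r\) preserves directed suprema.

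It remains to check \(r\circ s=\id_2\). We have \(r(a)=0\) because \(a\le a\). We have \(r(b)=1\) because \(b\not\le a\): otherwise \(a<b\le a\) would contradict antisymmetry. Thus \(2\) is a retract of \(D\).

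The only subtle step is the Scott-continuity of \(r\), and it reduces to the standard fact that principal ideals \(\down a\) are Scott-closed.
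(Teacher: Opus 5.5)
Your proof is correct and follows the paper's argument. The paper uses the same section \(0\mapsto a\), \(1\mapsto b\) and takes as retraction the characteristic map of a Scott-open set containing \(b\) but not \(a\), obtained from the \(T_0\) property of the Scott topology; your explicit choice \(D\setminus{\downarrow}a\) is the standard such set, so the two arguments coincide.
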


\begin{proof}
Choose \(a<b\) in \(D\). Since the Scott topology is \(T_0\), there is a Scott-open set \(U\) with \(b\in U\) and \(a\notin U\). Define \(i:2\to D\) by \(i(0)=a\), \(i(1)=b\), and let \(p=\chi_U:D\to 2\). Both maps are Scott continuous and \(p\circ i=\id_{2}\).
\end{proof}

\begin{figure}[ht]
\centering
\begin{tikzcd}
2
  \arrow[r,shift left=.7ex,"i"]
&
D
  \arrow[l,shift left=.7ex,"p"]
\end{tikzcd}
\ 
$p\circ i=\id_2$.
\caption{The two-element chain as a retract of a non-discrete dcpo.}
\label{fig:retract}
\end{figure}

\begin{theorem}\label{thm:replete-hull}
For every dcpo \(R\), $\mathrm{ReflHull}_{\mathbf{DCPO}}(R)$ denotes the least reflective
full subcategory of $\mathbf{DCPO}$ containing \(R\).
\[
  \ReflHull_{\DCPO}(R)=E_R^{\perp}.
\]
In other words, the \(R\)-complete dcpos form the least reflective full subcategory of \(\DCPO\) containing \(R\).
\end{theorem}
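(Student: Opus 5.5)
We establish three facts in turn: \(E_R^{\perp}\) is reflective, it contains \(R\), and it is contained in every reflective full subcategory \(\mathcal K\) of \(\DCPO\) with \(R\in\mathcal K\). Reflectivity is exactly \cref{thm:BKS}. That \(R\in E_R^{\perp}\) is tautological: a morphism \(e\in E_R\) by definition satisfies \(e^*:[B\to R]\cong[A\to R]\), so \(e\) is \(R\)-equable.

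The substantive step is minimality. Let \(\mathcal K\) be reflective with \(R\in\mathcal K\), and write \(\kappa_D:D\to\mathbf L D\) for its unit. We first show \(\kappa_D\in E_R\) for every \(D\). Since \(R\in\mathcal K\), precomposition \(\kappa_D^*:[\mathbf L D\to R]\to[D\to R]\) is a bijection by the universal property. It is clearly monotone, so the point is to show its inverse is monotone, i.e.\ that the extension \(f\mapsto\widetilde f\) preserves the pointwise order.

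For this we use \cref{lem:reflective-closure}(2). Given \(f\le g\) in \([D\to R]\), consider \(R^{\two}=[\two\to R]\), the dcpo of comparable pairs. It is the equalizer in \(\DCPO\) of the two maps \(R\times R\to R\times R\) given by \((x,y)\mapsto(x,y)\) and \((x,y)\mapsto(x,x\vee y)\); this join need not exist, so we adjust the construction. Instead, \(R^{\two}\) is the limit (a cotensor) of \(R\), and \(\DCPO\)-reflective subcategories are closed under such limits. If this cannot be extracted from \cref{lem:reflective-closure}, we fall back on the following: \([\two\to R]\) is a retract of, or equalizer inside, a power \(R^{I}\), and we would realize it as the subobject \(\{(x,y): x\le y\}\), checking it is a limit via the exponential adjunction \(\DCPO(X,[\two\to R])\cong\DCPO(X\times\two,R)\). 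Granting \(R^{\two}\in\mathcal K\), the pair \((f,g)\) gives a Scott-continuous \(h:D\to R^{\two}\), which extends uniquely to \(\widetilde h:\mathbf L D\to R^{\two}\). Composing with the two projections and using uniqueness of extensions yields \(\pi_0\widetilde h=\widetilde f\) and \(\pi_1\widetilde h=\widetilde g\), hence \(\widetilde f\le\widetilde g\) pointwise. So \(\kappa_D\in E_R\), and the same argument applied to \(K\in\mathcal K\) in place of \(R\) shows the reflection is enriched.

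Finally, let \(C\in E_R^{\perp}\) and let \(\kappa=\kappa_C\). Since \(\kappa\in E_R\), the map \(\kappa^*:[\mathbf L C\to C]\to[C\to C]\) is an isomorphism, so there is \(r:\mathbf L C\to C\) with \(r\kappa=\id_C\). Then \(\kappa r\) and \(\id_{\mathbf L C}\) both extend \(\kappa\) along \(\kappa\), so by uniqueness \(\kappa r=\id\). Thus \(C\cong\mathbf L C\in\mathcal K\), and \(E_R^{\perp}\subseteq\mathcal K\).

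The main obstacle is the order-preservation step, i.e.\ closing \(\mathcal K\) under the cotensor \(R^{\two}\). I would settle it via the adjunction argument, verifying that \(\{(x,y)\in R\times R: x\le y\}\) with the inherited order is a limit in \(\DCPO\) (the limit of the diagram expressing \(\DCPO(-,R)^{\two}\)). This follows because a Scott-continuous map into it is exactly a pair of Scott-continuous maps \(u\le v\), which is what the argument above uses.
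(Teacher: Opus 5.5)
Your outer structure matches the paper's: reflectivity comes from \cref{thm:BKS}, $R\in E_R^{\perp}$ holds trivially, and the final retraction $r$ with $\kappa r=\id$ follows by uniqueness. The step you flag as the main obstacle, however, is a genuine gap as written. \cref{lem:reflective-closure}(2) gives closure of $\mathcal K$ only under ordinary (conical) limits, i.e.\ limits of diagrams $J\to\mathcal K$. Your exponential-adjunction check shows only that Scott-continuous maps $X\to\{(x,y)\in R\times R:x\le y\}$ correspond to pairs $u\le v$ of maps $X\to R$. That is the universal property of the cotensor $[\two\to R]$, which is a weighted limit. The condition $u\le v$ is not a commutativity condition on a cone over any diagram you have exhibited. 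Moreover, closure of a reflective subcategory under $[\two\to-]$ is equivalent to the extension operator being monotone: given monotonicity, the extensions $\bar\pi_0\le\bar\pi_1$ of the two projections give a retraction of the unit of $[\two\to K]$. So this closure is exactly what you are trying to prove and cannot be invoked for free. Your closing remark about replacing $R$ by $K\in\mathcal K$ inherits the same problem.

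The missing idea is a case split on $R$ combined with \cref{lem:retract}. If $R$ is discrete, $[D\to R]$ and $[\mathbf L D\to R]$ carry the discrete order, so there is nothing to prove. If $R$ is non-discrete, $\two$ is a retract of $R$, hence $\two\in\mathcal K$. With this, your route can be completed: $\{(x,y):x\le y\}$ is the equalizer of $f,g:R\times R\to\two^{\sigma(R)}$, where $f(x,y)(U)=\chi_U(x)$ and $g(x,y)(U)=\chi_{U\times U}(x,y)$. Both maps are Scott continuous, since $U\times U$ is Scott open in $R\times R$. They agree at $(x,y)$ iff every Scott-open set containing $x$ contains $y$, i.e.\ iff $x\le y$. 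This is an honest conical limit of objects of $\mathcal K$. The paper uses $\two\in\mathcal K$ more directly. Then $\kappa_D^{-1}:\sigma(\mathbf L D)\to\sigma(D)$ is a bijective frame homomorphism, hence an order isomorphism, so extension of maps into $\two$ is monotone. Applying this to $\chi_U\circ f\le\chi_U\circ g$ for each $U\in\sigma(R)$, and using that the Scott specialization order on $R$ is its original order, gives $\bar f\le\bar g$.
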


\begin{proof}
By \cref{thm:BKS}, \(E_R^\perp\) is reflective and contains \(R\), so \(\ReflHull_{\DCPO}(R)\subseteq E_R^\perp\). For the reverse inclusion, let \(\mathcal K\) be reflective with \(R\in\mathcal K\), let \(\kappa_D:D\to \mathbf{K}D\) be the reflection unit, and suppose \(D\in E_R^\perp\). We show that \(\kappa_D\in E_R\).

The map \(\kappa_D^*:[\mathbf{K}D\to R]\to[D\to R]\) is bijective. If \(R\) is discrete, both hom-dcpos have the discrete order, so this bijection is an order isomorphism. Suppose that \(R\) is non-discrete. By \cref{lem:retract,lem:reflective-closure}, \(\two\in\mathcal K\). Hence \(\kappa_D^{-1}:\sigma(\mathbf{K}D)\to\sigma(D)\) is a bijective frame homomorphism and therefore an order isomorphism.

Let \(f\leq g:D\to R\), and denote their unique extensions by \(\bar f,\bar g:\mathbf{K}D\to R\). For each \(U\in\sigma(R)\), the inequality \(\chi_U\circ f\leq\chi_U\circ g\) and the order-preserving extension into \(\two\) give \(\chi_U\circ\bar f\leq\chi_U\circ\bar g\). Thus, for every \(y\in \mathbf{K}D\) ,
\(
\overline{f}(y)\in U
\Longrightarrow
\overline{g}(y)\in U
\). Since the specialization order of the Scott topology of \(R\) is its original order, \(\bar f\leq\bar g\). Thus the inverse of \(\kappa_D^*\) is monotone, so \(\kappa_D^*\) is an isomorphism of dcpos and \(\kappa_D\in E_R\).

Since \(D\in E_R^\perp\), the morphism \(\kappa_D\in E_R\) is \(D\)-equable. In particular, the identity map \(\id_D:D\to D\) has a Scott-continuous extension \(r:\mathbf{K}D\to D\) such that \(r\circ\kappa_D=\id_D\).

Consider the two Scott-continuous maps \(\id_{\mathbf{K}D}\) and \(\kappa_D\circ r:\mathbf{K}D\to \mathbf{K}D\). Their composites with \(\kappa_D\) coincide:
\[
  (\kappa_D\circ r)\circ\kappa_D
  =
  \kappa_D\circ(r\circ\kappa_D)
  =
  \kappa_D
  =
  \id_{\mathbf{K}D}\circ\kappa_D.
\]
Since \(\mathbf{K}D\in\mathcal K\), the reflection property implies that
\(\kappa_D^*:[\mathbf{K}D\to \mathbf{K}D]\to[D\to \mathbf{K}D]\) is injective. Hence \(\kappa_D\circ r=\id_{\mathbf{K}D}\). Thus \(\kappa_D\) is an isomorphism, and consequently \(D\cong \mathbf{K}D\in\mathcal K\).

We have proved that every \(R\)-complete dcpo belongs to every reflective full subcategory of \(\DCPO\) containing \(R\). Therefore
\(
  E_R^\perp\subseteq\ReflHull_{\DCPO}(R).
\)
Combining the two inclusions gives
\(
  \ReflHull_{\DCPO}(R)=E_R^\perp.
\)
\end{proof}

\begin{remark}\label{rem:2.7}

The argument in the proof of \cref{thm:replete-hull} also shows that every reflective
full subcategory of $\mathbf{DCPO}$ is automatically enriched reflective.
Thus, in $\mathbf{DCPO}$, no distinction between ordinary and enriched
reflectivity is needed for full subcategories.
\end{remark}
In particular, for $R=2$, \cref{thm:replete-hull} allows us to study $R_2$
through $2$-equability.  Since Scott-continuous maps into $2$
correspond to Scott-open subsets, this will connect $R_2$ with the
Scott topology.

\section{Properness and minimality}\label{sec:proper}

We now specialize \cref{thm:replete-hull} to \(R=\two\). We first recall the order sobrification, then prove that \(\Rtwo\) is the least non-discrete reflective full subcategory and is properly contained in \(\DCPO\).

\subsection{Sobrification and order sobrification}

For a dcpo \(D\), let \(\Gamma(D)\) denote the complete lattice of
Scott-closed subsets of \(D\), ordered by inclusion. It is a coframe. An
element \(p\ne\varnothing\) of a coframe \(L\) is \emph{irreducible} if
\(p\le a\vee b\) implies \(p\le a\) or \(p\le b\); write \(\Irr(L)\) for
the nonzero irreducible elements, ordered as in \(L\). The enriched reflection at
\(2\) is denoted \(\mathbf{\mathbf{L_2}}D:\DCPO\to\Rtwo\), with unit
\(\lambda_D:D\to \mathbf{\mathbf{L_2}}D\). Following \cite[Section~1]{ho2018}, a full subcategory \(\mathcal C\) is \emph{\(\Gamma\)-faithful} if \(\Gamma(D)\cong\Gamma(E)\) implies \(D\cong E\) for all \(D,E\in\mathcal C\).

\begin{definition}[{\cite[Theorem~8.2.8]{goubault2013}}]
Let \(X\) be a \(T_0\) space. A \emph{sobrification} of \(X\) is a
sober space \(X^s\), together with a continuous map
\(
  \eta_X:X\longrightarrow X^s,
\)
such that, for every sober space \(Y\) and every continuous map
\(f:X\to Y\), there exists a unique continuous map
\(
  f^s:X^s\longrightarrow Y
\)
satisfying
\[
  f^s\circ\eta_X=f.
\]
The map \(\eta_X\) is called a \emph{sobrification map}.
\end{definition}

Let \(\operatorname{Irr}_c(X)\) be the nonempty irreducible closed subsets of \(X\). For \(U\in\mathcal O(X)\), put \(\Diamond U=\{A\in\operatorname{Irr}_c(X):A\cap U\neq\varnothing\}\). These sets form the lower-Vietoris topology on \(\operatorname{Irr}_c(X)\); the resulting space is denoted by \(S(X)\).

\begin{proposition}[{\cite[Theorem~8.2.8]{goubault2013}}]\label{prop:standard-sobrification}
For every \(T_0\) space \(X\), the space \(S(X)\) is sober and \(\eta_X:X\to S(X)\), \(x\mapsto\overline{\{x\}}\), is a sobrification map and a topological embedding.
\end{proposition}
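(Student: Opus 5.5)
We prove the three assertions of \cref{prop:standard-sobrification} in turn: \(S(X)\) is sober, \(\eta_X\) is a topological embedding, and \(\eta_X\) has the universal property. First we check that \(\Diamond\) preserves the lattice operations. We have \(\Diamond\bigcup_i U_i=\bigcup_i\Diamond U_i\) trivially, and \(\Diamond X=\operatorname{Irr}_c(X)\). The key identity is \(\Diamond(U\cap V)=\Diamond U\cap\Diamond V\). If an irreducible closed \(A\) meets both \(U\) and \(V\) but not \(U\cap V\), then \(A\subseteq (X\setminus U)\cup(X\setminus V)\). Irreducibility then forces \(A\) into one of these sets, which is a contradiction. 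Hence \(U\mapsto\Diamond U\) is a frame surjection \(\mathcal O(X)\to\mathcal O(S(X))\). It is also injective: if \(U\ne V\), say \(x\in U\setminus V\), then \(\overline{\{x\}}\in\Diamond U\setminus\Diamond V\), since \(\overline{\{x\}}\) meets an open set exactly when \(x\) lies in it. So the two frames are isomorphic.

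This already gives the embedding. We have \(\eta_X^{-1}(\Diamond U)=\{x:\overline{\{x\}}\cap U\ne\varnothing\}=U\). Thus \(\eta_X\) is continuous and every open set of \(X\) is a preimage, so \(X\) carries the initial topology. Since \(X\) is \(T_0\), \(\eta_X\) is injective, and hence \(\eta_X\) is a topological embedding.

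Next we show sobriety. The closed sets of \(S(X)\) are the sets \(\{A: A\subseteq C\}\) for \(C\) closed in \(X\). Let \(\mathcal F\) be an irreducible closed subset of \(S(X)\), write \(\mathcal F=\{A:A\subseteq C\}\), and choose \(C\) minimal, namely \(C=\overline{\bigcup\mathcal F}\). Under the frame isomorphism above, irreducibility of \(\mathcal F\) transfers to irreducibility of \(C\) in \(X\), so \(C\in\operatorname{Irr}_c(X)\). Then \(\mathcal F\) is exactly the closure of the point \(C\), because the closure of \(\{C\}\) is the set of \(A\subseteq C\). Uniqueness of the generic point is \(T_0\)-ness: distinct irreducible closed sets \(A\ne B\) are separated by some \(\Diamond U\).

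Finally we prove the universal property, which is the main step. Let \(f:X\to Y\) be continuous with \(Y\) sober. Define \(f^s(A)\) to be the unique generic point of \(\overline{f(A)}\); this closed set is irreducible because continuous images of irreducible sets are irreducible. Then \(f^s(\overline{\{x\}})=f(x)\). For continuity we compute, for \(W\) open in \(Y\), that \(f^s(A)\in W\) iff \(\overline{f(A)}\cap W\ne\varnothing\) iff \(A\cap f^{-1}(W)\ne\varnothing\). Hence \((f^s)^{-1}(W)=\Diamond f^{-1}(W)\), which is open.

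For uniqueness, let \(g,h:S(X)\to Y\) be continuous maps agreeing on \(\eta_X(X)\). Each point \(A\in S(X)\) lies in the closure of \(\eta_X(A)\): every basic open \(\Diamond U\) containing \(A\) contains \(\overline{\{x\}}\) for some \(x\in A\cap U\). Since \(\eta_X(X)\) is dense, the topologies pulled back by \(g\) and \(h\) agree on the frame level: \(g^{-1}(W)\) and \(h^{-1}(W)\) have the same trace on \(\eta_X(X)\), and the trace map is injective by the frame isomorphism. So \(g(A)\in W\iff h(A)\in W\) for every open \(W\). The \(T_0\) property of the sober space \(Y\) then gives \(g=h\).
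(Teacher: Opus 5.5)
Your proof is correct and is the standard argument behind the cited result; the paper gives no proof of its own, only the reference to Goubault-Larrecq. The argument runs through the frame isomorphism \(U\mapsto\Diamond U\), generic points for sobriety, and \(f^s(A)\) defined as the generic point of \(\overline{f(A)}\). One wording point: in the uniqueness step the work is done not by density of \(\eta_X(X)\) (mere density would not make the trace map on open sets injective) but by the fact that \(\eta_X^{-1}\) is the inverse of \(\Diamond\), which you correctly invoke.
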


For a dcpo \(D\), the specialization order of \(S(\Sigma D)\) is inclusion. Following \cite[Section~2]{ho2018}, its \emph{order sobrification} is \(\widehat D=\Irr(\Gamma(D))\), with canonical map \(\eta_D:D\to\widehat D\), \(x\mapsto{\downarrow} x\). Equivalently, \(\widehat D=\Omega S(\Sigma D)\).

\begin{lemma}\label{lem:eta-dcpo-embedding}
For every dcpo \(D\), the map \(\eta_D:D\to\widehat D\) is a dcpo embedding and \(\eta_D[D]\) is a sub-dcpo of \(\widehat D\).
\end{lemma}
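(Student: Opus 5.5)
We need to check three things: that \(\eta_D\) is well defined, that it is an order embedding, and that it preserves directed suprema; that \(\eta_D[D]\) is a sub-dcpo then follows.

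\emph{Well defined.} For \(x\in D\), the principal ideal \({\downarrow}x\) is Scott closed: it is a lower set, and it contains the supremum of any directed subset of it. It is nonempty, and it is irreducible in \(\Gamma(D)\). Indeed, if \({\downarrow}x\subseteq A\cup B\) with \(A,B\) Scott closed, then \(x\in A\) or \(x\in B\). Since \(A\) and \(B\) are lower sets, this gives \({\downarrow}x\subseteq A\) or \({\downarrow}x\subseteq B\). Hence \(\eta_D(x)\in\widehat D\).

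\emph{Order embedding.} We have \({\downarrow}x\subseteq{\downarrow}y\) if and only if \(x\le y\). So \(\eta_D\) is monotone, reflects the order, and is in particular injective.

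\emph{Directed suprema, the main step.} First we need to know that \(\widehat D\) is a dcpo and how its directed suprema are computed. Let \((A_i)_{i\in I}\) be a directed family in \(\widehat D\). We claim that its supremum in \(\widehat D\) is \(\cl\bigl(\bigcup_i A_i\bigr)\), the Scott closure of the union.
\begin{itemize}[label=\textbullet]
\item This set is clearly the least Scott-closed set containing every \(A_i\), so it is their join in \(\Gamma(D)\).
\item It is irreducible. Suppose \(\cl\bigl(\bigcup_i A_i\bigr)\subseteq A\cup B\) with \(A,B\) Scott closed. Each \(A_i\) is irreducible, so each \(A_i\) lies in \(A\) or in \(B\). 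If some \(A_i\not\subseteq A\) and some \(A_j\not\subseteq B\), choose \(A_k\) containing both. Then \(A_k\) lies in neither \(A\) nor \(B\), which is a contradiction. So all \(A_i\) lie in \(A\), or all lie in \(B\). Since \(A\) and \(B\) are closed, the closure of the union lies there as well.
\end{itemize}
(Alternatively, one can invoke that the specialization order of a sober space is a dcpo, via \cref{prop:standard-sobrification} and \(\widehat D=\Omega S(\Sigma D)\).)

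Now let \(\Delta\subseteq D\) be directed with supremum \(s=\bigvee\Delta\). Since \(\eta_D\) is monotone, \(\cl\bigl(\bigcup_{d\in\Delta}{\downarrow}d\bigr)\subseteq{\downarrow}s\). Conversely, any Scott-closed set containing \({\downarrow}\Delta\) contains \(s\), because it is closed under directed suprema. Hence it contains \({\downarrow}s\). Therefore \(\bigvee_{d\in\Delta}\eta_D(d)=\eta_D(s)\), and \(\eta_D\) is Scott continuous.

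\emph{Sub-dcpo.} Let \(\{{\downarrow}d:d\in\Delta\}\) be a directed subset of \(\eta_D[D]\). Because \(\eta_D\) is an order embedding, \(\Delta\) is directed in \(D\). By the previous step, its supremum in \(\widehat D\) is \({\downarrow}\bigvee\Delta\in\eta_D[D]\). So \(\eta_D[D]\) is closed under directed suprema of \(\widehat D\).

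Combining these facts, \(\eta_D\) is a Scott-continuous order embedding that preserves and reflects directed suprema onto its image, i.e.\ a dcpo embedding. The only genuinely nontrivial point is computing directed joins in \(\widehat D\) and checking that they remain irreducible; everything else is routine.
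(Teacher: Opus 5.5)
Your proof is correct and follows essentially the same route as the paper. In both, \(\eta_D\) is an order embedding because \({\downarrow}x\subseteq{\downarrow}y\iff x\le y\), and the directed supremum of \(({\downarrow}x_i)_i\) in \(\widehat D\) is the Scott closure of \(\bigcup_i{\downarrow}x_i\), which equals \({\downarrow}\bigvee_i x_i\).

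The only difference is that you also check explicitly that principal ideals are irreducible and that directed joins of irreducible sets remain irreducible. The paper takes both for granted here; it proves the second one later, for an arbitrary coframe, in \cref{def:realization}.
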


\begin{proof}
The map \(\eta_D\) is an order embedding because \({\downarrow} x\subseteq{\downarrow} y\) if and only if \(x\leq y\). If \((x_i)_i\) is directed, then its supremum in \(\widehat D\) is the Scott closure of \(\bigcup_i{\downarrow} x_i\), which equals \({\downarrow}\bigvee_i x_i\). Hence \(\eta_D(\bigvee_i x_i)=\bigvee_i\eta_D(x_i)\), and the image is closed under directed suprema.
\end{proof}

\subsection{The least non-discrete reflective full subcategory}

A map \(u:D\to\two\) is Scott continuous precisely when \(u^{-1}(1)\) is Scott open. Ordered pointwise on the left and by inclusion on the right, this gives a natural isomorphism of dcpos
\(
  [D\to\two]\cong\sigma(D).
\)

Consequently,
\(
 E_{\two}=
 \left\{e:A\to B\mid
 e^{-1}:\sigma(B) \cong \sigma(A)
 \text{ is an order isomorphism}
 \right\}.
\)
Recall that
\(
  \Rtwo=E_{\two}^{\perp}.
\)
Since inverse-image maps preserve arbitrary unions and finite intersections, a bijective inverse-image map is equivalently a frame isomorphism.

\begin{lemma}\label{lem:open-embedding}
Let \(e:A\to B\) be Scott continuous. If
\(
  e^{-1}:\sigma(B)\twoheadrightarrow\sigma(A)
\)
is surjective, then \(e\) is an order embedding and \(e[A]\) is a sub-dcpo of \(B\). Hence \(e:A\to e[A]\) is an isomorphism of dcpos.
\end{lemma}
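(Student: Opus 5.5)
We must show three things: \(e\) reflects the order, \(e\) preserves directed suprema (so \(e[A]\) is closed under directed suprema in \(B\)), and the corestriction \(e:A\to e[A]\) has a Scott-continuous inverse. Monotonicity and Scott continuity of \(e\) itself are given. The only tool needed is surjectivity of \(e^{-1}\): every Scott-open subset of \(A\) has the form \(e^{-1}(V)\) for some \(V\in\sigma(B)\).

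\emph{Order reflection.} Suppose \(e(x)\le e(y)\) but \(x\not\le y\). The set \(U=A\setminus{\downarrow}y\) is Scott open and contains \(x\). Write \(U=e^{-1}(V)\) with \(V\in\sigma(B)\). Then \(e(x)\in V\), and since \(V\) is an upper set, \(e(y)\in V\), so \(y\in U\). This contradicts \(y\in{\downarrow}y\). Hence \(e\) is an order embedding, and in particular injective.

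\emph{Directed suprema.} Let \(S\subseteq A\) be directed with \(s=\bigvee S\). Scott continuity gives \(e(s)=\bigvee e[S]\) in \(B\), so every directed subset of \(e[A]\) has its supremum in \(B\) lying in \(e[A]\). Thus \(e[A]\) is a sub-dcpo of \(B\). Since the order on \(e[A]\) is the one induced from \(B\), directed suprema in \(e[A]\) agree with those computed in \(B\).

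\emph{The inverse.} The inverse \(e^{-1}:e[A]\to A\) is monotone by order reflection. It preserves directed suprema: a directed \(T\subseteq e[A]\) equals \(e[S]\) for the directed set \(S=e^{-1}[T]\) (directed because \(e\) is an order embedding). Then \(\bigvee T=e(\bigvee S)\), so \(e^{-1}(\bigvee T)=\bigvee S\). Hence \(e:A\to e[A]\) is an isomorphism of dcpos.

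The only step using the hypothesis nontrivially is order reflection, which rests on choosing the open set \(A\setminus{\downarrow}y\). The rest is routine, so I expect no real obstacle.
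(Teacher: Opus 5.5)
Your proof is correct and follows essentially the paper's route: order reflection comes from writing a Scott-open neighbourhood of \(x\) as \(e^{-1}(V)\) and using that \(V\) is an upper set (you simply instantiate the paper's specialization-order argument with the concrete open set \(A\setminus{\downarrow}y\)), and closure of \(e[A]\) under directed suprema comes from Scott continuity. One ordering point: the sub-dcpo claim in your second paragraph already needs that a directed \(T\subseteq e[A]\) pulls back, by order reflection, to a directed subset of \(A\); you state this only in your third paragraph, whereas the paper uses it at exactly that step.
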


\begin{proof}
Suppose \(e(x)\le e(y)\). Let \(U\in\sigma(A)\) contain \(x\). Choose \(V\in\sigma(B)\) with \(U=e^{-1}(V)\). Since \(V\) is an upper set,
\(
  e(x)\in V,\  e(x)\le e(y)
  \ \Longrightarrow\  e(y)\in V,
\)
so \(y\in U\). Thus every Scott-open neighbourhood of \(x\) contains \(y\), and the specialization order of the Scott topology gives \(x\le y\). Therefore \(e\) reflects order and is injective. Since \(e\) is Scott continuous and therefore monotone, it is an order embedding.

If \((e(x_i))_i\) is directed in \(e[A]\), then \((x_i)_i\) is directed in \(A\) because \(e\) reflects order. Scott continuity gives
\(
  \bigvee_i e(x_i)=e\left(\bigvee_i x_i\right)\in e[A].
\)
Thus \(e[A]\) is a sub-dcpo and \(e\) is an isomorphism onto its image.
\end{proof}

\begin{theorem}\label{thm:rigidity}
Let \(D\in\Rtwo\). Every morphism \(e:D\to E\) in \(E_{\two}\) is an isomorphism in \(\DCPO\).
\end{theorem}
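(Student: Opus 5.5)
We mirror the retraction argument from the proof of \cref{thm:replete-hull}. Let \(e:D\to E\) be in \(E_{\two}\), so \(e^{-1}:\sigma(E)\to\sigma(D)\) is a frame isomorphism. Since \(D\in\Rtwo=E_{\two}^{\perp}\), the morphism \(e\) is \(D\)-equable. In other words,
\(
e^*:[E\to D]\to[D\to D]
\)
is an isomorphism of dcpos. In particular there is a unique Scott-continuous \(r:E\to D\) with \(r\circ e=\id_D\). Thus \(e\) is a split monomorphism. (This also follows from \cref{lem:open-embedding}, which shows that \(e\) is an order embedding onto a sub-dcpo.)

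It remains to show \(e\circ r=\id_E\). The difficulty is that \(E\) need not lie in \(\Rtwo\), so we cannot cancel \(e\) on the right using a reflection property of \(E\) as was done for \(\mathbf{K}D\). Instead we use the hypothesis on \(E\) only through its Scott topology. The map \(e\circ r:E\to E\) satisfies \((e\circ r)\circ e=e\). For each \(V\in\sigma(E)\), the maps \(\chi_V\circ e\circ r\) and \(\chi_V\) are Scott-continuous maps \(E\to\two\). Their composites with \(e\) agree:
\[
\chi_V\circ e\circ r\circ e=\chi_V\circ e .
\]
Since \(e\in E_{\two}\), the precomposition \(e^*:[E\to\two]\to[D\to\two]\) is injective. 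Hence \(\chi_V\circ e\circ r=\chi_V\), that is, \((e\circ r)^{-1}(V)=V\) for every Scott-open \(V\) of \(E\).

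Now let \(y\in E\) and \(z=e(r(y))\). For every \(V\in\sigma(E)\), we have \(y\in V\) iff \(z\in V\). Because the Scott topology is \(T_0\) and its specialization order is the order of \(E\), this gives \(y=z\). Hence \(e\circ r=\id_E\), and \(e\) is an isomorphism in \(\DCPO\) with inverse \(r\).

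The only delicate point is the cancellation step. It must be done at the level of maps into \(\two\), where injectivity of \(e^*\) is exactly the hypothesis \(e\in E_{\two}\). It must not be done at the level of maps into \(E\), where no such injectivity is available. Everything else is routine.
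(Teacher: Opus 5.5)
Your proof is correct and is essentially the paper's argument: the retraction \(r\) comes from \(D\)-equability of \(e\), and \(e\circ r=\id_E\) follows by cancelling \(e\) on maps into \(\two\) (the paper phrases this as injectivity of \(e^{-1}\) on \(\sigma(E)\), applied to \(r^{-1}(e^{-1}(W))\) versus \(W\)), followed by the \(T_0\) property. One small caveat: your parenthetical claim that splitness ``also follows'' from \cref{lem:open-embedding} is not right, since a dcpo embedding with surjective \(e^{-1}\) need not have a Scott-continuous retraction (for example, the two-point antichain inside \(\{a,b<c\}\)); nothing in your argument depends on that remark.
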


\begin{proof}
Since \(D\in\Rtwo\), precomposition with \(e\) induces an isomorphism
of dcpos
\(
  e^*:[E\to D]\cong[D\to D].
\)
In particular, the identity map \(\id_D:D\to D\) has a unique
Scott-continuous extension along \(e\). Hence there exists a
Scott-continuous map
\(
  r:E\longrightarrow D
\)
such that
\(
  r\circ e=\id_D.
\)

We show that \(e\circ r=\id_E\). Let \(y\in E\), let
\(W\in\sigma(E)\), and put
\(
  U=e^{-1}(W)\in\sigma(D).
\)
Then
\[
  e^{-1}\bigl(r^{-1}(U)\bigr)
  =(r\circ e)^{-1}(U)
  =U
  =e^{-1}(W).
\]
Since
\(
  e^{-1}:\sigma(E)\longrightarrow\sigma(D)
\)
is injective, it follows that
\(
  r^{-1}(U)=W.
\)
Therefore
\[
  y\in W
  \Longleftrightarrow r(y)\in U
  \Longleftrightarrow e(r(y))\in W.
\]
Thus \(y\) and \(e(r(y))\) have exactly the same Scott-open
neighbourhoods. Since Scott spaces are \(T_0\),
\(
  e(r(y))=y.
\)
Hence
\(
  e\circ r=\id_E,
\)
and \(e\) is an isomorphism.
\end{proof}

\begin{remark}
Surjectivity of \(e^{-1}:\sigma(E)\to\sigma(D)\) already makes \(e\) a dcpo embedding by \cref{lem:open-embedding}. If \(D\in\Rtwo\) and the inverse-image map is an isomorphism, \cref{thm:rigidity} shows that the embedding is onto.
\end{remark}

\begin{corollary}
\label{cor:minimal}
The category \(\Rtwo\) is the least non-discrete reflective full
subcategory of \(\DCPO\).
\end{corollary}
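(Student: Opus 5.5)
The corollary has two parts. First, \(\Rtwo\) is itself a non-discrete reflective full subcategory. Second, every non-discrete reflective full subcategory \(\mathcal K\) of \(\DCPO\) contains \(\Rtwo\). Here ``non-discrete'' is read as ``contains at least one dcpo with a nontrivial order''.

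For the first part, apply \cref{thm:replete-hull} with \(R=\two\). It gives \(\Rtwo=E_{\two}^{\perp}=\ReflHull_{\DCPO}(\two)\), which is reflective by \cref{thm:BKS}. It contains \(\two\) because \(\ReflHull_{\DCPO}(\two)\) contains \(\two\) by definition. Since \(0<1\) in \(\two\), the subcategory \(\Rtwo\) is non-discrete.

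For the second part, let \(\mathcal K\) be reflective and contain a non-discrete dcpo \(D\). By \cref{lem:retract}, \(\two\) is a retract of \(D\). By \cref{lem:reflective-closure}(2), \(\mathcal K\) is closed under retracts, and we use that it is also closed under isomorphic copies. Hence \(\two\in\mathcal K\). Since \(\Rtwo=\ReflHull_{\DCPO}(\two)\) is the least reflective full subcategory containing \(\two\), we get \(\Rtwo\subseteq\mathcal K\).

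The closure of reflective subcategories under retracts is quoted from \cref{lem:reflective-closure}, so nothing there needs proof. If one wants it directly: let \(p\circ i=\id_{\two}\) with \(i:\two\to D\), and let \(\kappa:\two\to\mathbf K\two\) be the reflection unit. Extend \(i\) to \(\tilde i:\mathbf K\two\to D\) with \(\tilde i\circ\kappa=i\), and set \(r=p\circ\tilde i\). Then \(r\circ\kappa=p\circ i=\id_{\two}\). Moreover \(\kappa\circ r\circ\kappa=\kappa=\id_{\mathbf K\two}\circ\kappa\), so uniqueness of extensions gives \(\kappa\circ r=\id_{\mathbf K\two}\). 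Thus \(\kappa\) is an isomorphism and \(\two\cong\mathbf K\two\in\mathcal K\).

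The only real point of care is the first part's non-discreteness together with the notion of ``least''. Leastness is immediate from the hull characterization in \cref{thm:replete-hull}, so I expect no substantive obstacle.
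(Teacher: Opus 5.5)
Your proof is correct and follows the paper's argument: \cref{lem:retract} makes \(\two\) a retract of any non-discrete member of \(\mathcal K\), closure under retracts puts \(\two\in\mathcal K\), and minimality of \(\ReflHull_{\DCPO}(\two)=\Rtwo\) from \cref{thm:replete-hull} gives \(\Rtwo\subseteq\mathcal K\). Your extra steps (that \(\Rtwo\) is itself reflective and non-discrete, and the direct retract-closure argument for \(\two\)) are sound and only make explicit what the paper leaves implicit or cites.
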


\begin{proof}
Let \(\mathcal K\) be a reflective full subcategory of \(\DCPO\)
containing a non-discrete dcpo \(D\). Reflective full subcategories
are closed under retracts. Hence \cref{lem:retract} implies that
\(\two\in\mathcal K\).
By the defining minimality of the reflective hull,
\(\ReflHull_{\DCPO}(\two)\subseteq\mathcal K\).
Therefore
\(\Rtwo\subseteq\mathcal K\).
\end{proof}

\subsection{Johnstone's dcpo and the Ho--Zhao counterexample}

The Ho--Zhao problem asks whether \(\DCPO\) is $\Gamma$-faithful, namely,
whether two dcpos with isomorphic lattices of Scott-closed sets must be
isomorphic. Ho, Goubault-Larrecq, Jung, and Xi gave a negative answer
in~\cite{ho2018}. Their counterexample is based on a
dcpo $H$ obtained by iterating Johnstone's non-sober dcpo. 

We first recall
Johnstone's dcpo.
Put
\(
  \overline{\mathbb N}=\mathbb N\cup\{\infty\},
\)
where \(\infty\) is greater than every natural number. Johnstone's
dcpo, denoted by \(\mathcal \J\), has underlying set
\[
  \mathcal \J=\mathbb N\times\overline{\mathbb N}.
\]

For \(m,m',n,n'\in\mathbb N\), define
\[
\begin{aligned}
  (m,n)&<_{1}(m,n')
    &&\text{if }n<n',\\
  (m,n)&<_{2}(m,\infty),\\
  (m,n)&<_{3}(m',\infty)
    &&\text{if }n\leq m'.
\end{aligned}
\]
Let
\[
  <\;=\;<_{1}\cup<_{2}\cup<_{3},
  \ 
  \leq\;=\;<\cup=.
\]
Then \(\leq\) is a partial order on \(\mathcal \J\).

\begin{figure}[htb]
	\centering
     %\zihao{-5}\songti
	\includegraphics[width=5cm]{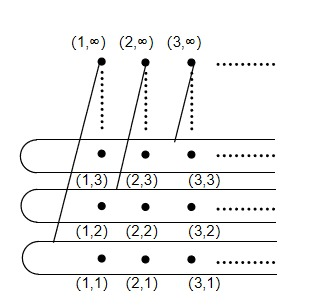}
 \caption{\  Johnstone dcpo \(\J\)}
  \label{fig:图1} % 交叉引用标签（如\ref{fig:vietoris}）
\end{figure}

For \(m\in\mathbb N\), the chain
\[
  C_m=\{m\}\times\overline{\mathbb N}
\]
is called the \(m\)-th component. For
\(n\in\overline{\mathbb N}\), the set
\[
  L_n=\mathbb N\times\{n\}
\]
is called the \(n\)-th level.

The only nontrivial directed subsets of \(\mathcal \J\), up to
cofinal subsets, are the chains
\[
  \{(m,n):n\in N\},
\]
where \(N\subseteq\mathbb N\) is infinite, and
\[
  \bigvee_{n\in N}(m,n)=(m,\infty).
\]
Hence \(\mathcal \J\) is a dcpo.

The Scott-closed irreducible subsets of \(\mathcal \J\) are precisely
the principal ideals and \(\mathcal \J\) itself. Since \(\mathcal \J\)
has no greatest element, it is not sober in its Scott topology.

Next, we recall the construction of $H$ because the same dcpo provides a
natural witness for the properness of $\mathcal{R}_2$.

Let \(\mathbb N^*\) denote the set of all finite strings of natural
numbers. We write
$\varepsilon$
for the empty string, $n.s$ for the string obtained by adjoining \(n\) to the front of \(s\),
and $ts$ for the concatenation of the strings \(t\) and \(s\).

For every \(s\in\mathbb N^*\), let \(\mathcal \J_s\) be a copy of
Johnstone's dcpo, and form the disjoint union
\[
  H'
  =
  \coprod_{s\in\mathbb N^*}\mathcal \J_s.
\]
We write the elements of \(H'\) as triples
\[
  (m,n,s)
  \in
  \mathbb N\times\overline{\mathbb N}\times\mathbb N^*.
\]

Define an equivalence relation \(\sim\) on \(H'\) by
\[
  (m,n,s)\sim(m,\infty,n.s)
\]
for all \(m,n\in\mathbb N\) and \(s\in\mathbb N^*\). Thus the
finite element \((m,n,s)\) of the copy \(\mathcal \J_s\) is
identified with the limit element \((m,\infty,n.s)\) of the copy
\(\mathcal \J_{n.s}\).

\begin{figure}[htb]
	\centering
	\includegraphics[width=6cm]{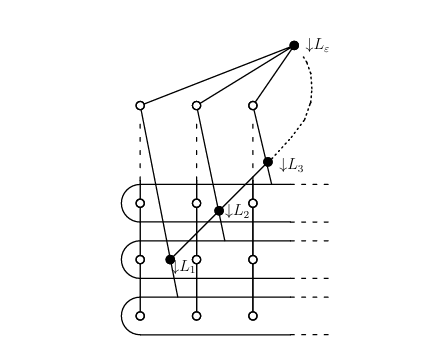}
 \caption{\  The top part of the order sobrification of $H$}
  \label{fig:图3} % 交叉引用标签（如\ref{fig:vietoris}）

\end{figure}

The underlying set of \(H\) is the quotient
\[
  H=H'/\mathord{\sim}.
\]
Every equivalence class contains exactly one element of the form
\[
  (m,\infty,s).
\]
We abbreviate this element to \((m,s)\). Consequently, the
underlying set of \(H\) may be identified with
\[
  \mathbb N\times\mathbb N^*.
\]

Informally, one begins with one copy of Johnstone's dcpo. At each
finite level one attaches a further copy whose limit points are
identified with the points of that level. This procedure is repeated
at every newly created finite level. In the resulting ordered set,
every point is a limit point in some copy of \(\mathcal \J\).

For a nonempty finite string \(t\), let \(\min(t)\) denote its
smallest entry. Define relations on
\(\mathbb N\times\mathbb N^*\) as follows:
\[
\begin{aligned}
  (m,n.s)&<_{1}(m,n'.s)
    &&\text{if }n<n',\\
  (m,ts)&<_{2}(m,s)
    &&\text{if }t\neq\varepsilon,\\
  (m,ts)&<_{3}(m',s)
    &&\text{if }t\neq\varepsilon
      \text{ and }\min(t)\leq m'.
\end{aligned}
\]

For relations \(R,S\), write \(R;S\) for their composite (first \(R\), then \(S\)), and put
\[
  <
  =
  <_{1}\cup<_{2}\cup<_{3}
  \cup(<_{2};<_{1})
  \cup(<_{3};<_{1}).
\]
Finally, define
\[
  \leq\;=\;<\cup=.
\]
\begin{figure}[htb]
	\centering
	\includegraphics[width=6cm]{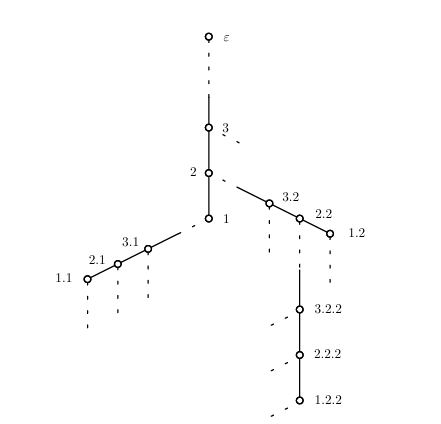}
 \caption{\  The order on $\mathbb{N}^*$}
  \label{fig:图4} % 交叉引用标签（如
\end{figure}
\noindent

\begin{proposition}
\label{prop:H-order}
The relation \(\leq\) is a partial order on
\(\mathbb N\times\mathbb N^*\) by \cite[Proposition~5.1]{ho2018}, and
\(H\) is a dcpo by \cite[Proposition~5.4]{ho2018}.

The construction has the following properties:
\begin{enumerate}[label=\textup{(\roman*)}]
\item the irreducible Scott-closed subsets of \(H\) are exactly the principal ideals \(\down h\) and the layer closures \(\down L_s\)(see Figure 3, s is a string, for example, $L_{n.s}$ is the n-th layer of $\J_s$) \cite[Theorem~5.12]{ho2018};
\item the order-sobrification unit \(\eta_H:H\to \widehat{H}\) induces an isomorphism
\[
  \eta_H^{-1}:\sigma(\widehat{H})\cong\sigma(H);
\]
by \cite[Theorem~4.1 and Section~5.7]{ho2018};
\item \(\widehat{H}\) has a largest element, whereas \(H\) has no largest element.
\end{enumerate}
\end{proposition}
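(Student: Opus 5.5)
Most of this proposition is imported from \cite{ho2018}. I would cite \cite[Proposition~5.1]{ho2018} for the fact that \(\leq\) is a partial order on \(\mathbb N\times\mathbb N^*\), and \cite[Proposition~5.4]{ho2018} for directed completeness. Item~(i) is \cite[Theorem~5.12]{ho2018}. Item~(ii) combines \cite[Theorem~4.1]{ho2018}, which says the order sobrification of \(H\) has the same Scott topology transported along \(\eta_H\), with the verification in \cite[Section~5.7]{ho2018} that its hypotheses hold for \(H\). The only part I would argue directly is~(iii), using just the explicit description of \(<\).

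\emph{\(H\) has no largest element.} I would check how each basic relation changes the string coordinate.
\begin{itemize}
\item \(<_1\) keeps the string length and keeps the first coordinate \(m\).
\item \(<_2\) and \(<_3\) strictly shorten the string, since \(t\neq\varepsilon\).
\end{itemize}
Consequently, if \((m',s)\le(m,s)\) with both strings equal to \(s\), no \(<_2\) or \(<_3\) step can occur, so only \(<_1\) is used and \(m'=m\). Hence for \(m\neq m'\) the elements \((m,s)\) and \((m',s)\) are incomparable. In particular, for every \((m,s)\in H\) the element \((m+1,s)\) is not below it, so \(H\) has no top.

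\emph{\(\widehat H\) has a largest element.} I would show that \(H\) itself is an irreducible Scott-closed set; it is then the top of \(\widehat H=\Irr(\Gamma(H))\).
\begin{itemize}
\item \(H={\downarrow}L_\varepsilon\), where \(L_\varepsilon=\{(m,\varepsilon):m\in\mathbb N\}\). Indeed, every \((m,s)\) with \(s\neq\varepsilon\) satisfies \((m,s)<_2(m,\varepsilon)\). So this is the layer closure in~(i) for the top copy \(\J_\varepsilon\).
\item To see irreducibility directly, I would use \((m,\varepsilon)=\bigvee_k (m,k.\varepsilon)\). This is the \(\sim\)-image of the chain \((m,k)\to(m,\infty)\) in \(\J_\varepsilon\), and its supremum in \(H\) is justified by the description of directed sets in \cite{ho2018}.
\item Let \(U\) be a nonempty Scott-open set. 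Being an upper set, it contains some \((m,\varepsilon)\). Being inaccessible by directed suprema, it contains \((m,k.\varepsilon)\) for some \(k\).
\item Since \((m,k.\varepsilon)<_3(m'',\varepsilon)\) whenever \(m''\ge k\), the set \(U\) contains all \((m'',\varepsilon)\) with \(m''\) large enough.
\item Two nonempty Scott-open sets therefore meet. Hence \(H\) is irreducible, and it is the largest element of \(\widehat H\).
\end{itemize}

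The main obstacle is the supremum identity \((m,\varepsilon)=\bigvee_k(m,k.\varepsilon)\) in the quotient \(H\), rather than merely in the copy \(\J_\varepsilon\). I would take it from the analysis of directed subsets in \cite[Section~5]{ho2018} rather than reprove it.
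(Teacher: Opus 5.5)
Your proposal is correct and matches the paper, which likewise takes the partial-order and dcpo properties and items (i)--(ii) directly from \cite{ho2018} and gives no argument for (iii). Your direct check of (iii) is sound: \((m,s)\) and \((m+1,s)\) are incomparable, and \(H={\downarrow}L_\varepsilon\) is irreducible. The supremum identity you flag as the main obstacle is also immediate from the explicit order, so you need not cite the analysis of directed sets in \cite{ho2018}. The elements strictly above \((m,k.\varepsilon)\) are \((m,k'.\varepsilon)\) with \(k'>k\), \((m,\varepsilon)\), and \((m',\varepsilon)\) with \(m'\ge k\). Hence \((m,\varepsilon)\) is the only common upper bound of the chain, and so it is its supremum.
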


\begin{theorem}\label{thm:proper}
The dcpo \(H\) does not belong to \(\Rtwo\). Consequently, \(\Rtwo\subsetneq\DCPO\).
\end{theorem}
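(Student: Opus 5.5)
The plan is to argue by contradiction using \cref{thm:rigidity}. We exhibit a morphism out of \(H\) that lies in \(E_{\two}\) but is not an isomorphism; rigidity then forbids \(H\in\Rtwo\).

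The natural candidate is the order-sobrification unit \(\eta_H:H\to\widehat H\), \(h\mapsto\down h\).

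\textbf{Step 1: \(\eta_H\) is Scott continuous.} This follows from \cref{lem:eta-dcpo-embedding}: \(\eta_H\) preserves directed suprema, and is therefore a morphism of \(\DCPO\).

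\textbf{Step 2: \(\eta_H\in E_{\two}\).} We use the identification \([D\to\two]\cong\sigma(D)\) established before \cref{lem:open-embedding}. Under it, the precomposition map \(\eta_H^*:[\widehat H\to\two]\to[H\to\two]\) corresponds to the inverse-image map \(\eta_H^{-1}:\sigma(\widehat H)\to\sigma(H)\). By \cref{prop:H-order}(ii), this map is an order isomorphism. Hence \(\eta_H^*\) is an isomorphism of dcpos, which is exactly the membership \(\eta_H\in E_{\two}\).

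\textbf{Step 3: \(\eta_H\) is not an isomorphism.} By \cref{prop:H-order}(iii), \(\widehat H\) has a largest element while \(H\) has none. Since order isomorphisms preserve the existence of a top element, \(H\not\cong\widehat H\), and in particular \(\eta_H\) is not an isomorphism in \(\DCPO\). One can also see this concretely: the layer closures \(\down L_s\) from \cref{prop:H-order}(i) are points of \(\widehat H\) that are not principal ideals, so \(\eta_H\) fails to be surjective.

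\textbf{Conclusion.} Suppose \(H\in\Rtwo\). Then \cref{thm:rigidity}, applied to \(e=\eta_H\), would force \(\eta_H\) to be an isomorphism, contradicting Step 3. Hence \(H\notin\Rtwo\). Since \(H\) is a dcpo, it follows that \(\Rtwo\subsetneq\DCPO\).

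The only genuinely hard ingredient is Step 2, namely that \(\eta_H^{-1}\) is a bijection on Scott topologies. This is precisely the deep content of \cite{ho2018} recorded in \cref{prop:H-order}(ii), and we take it as given. The remaining care is to check that the translation between \(e^*\) on \([-\to\two]\) and \(e^{-1}\) on \(\sigma(-)\) is natural. It is: for \(u=\chi_V\) we have \(u\circ e=\chi_{e^{-1}(V)}\), so the two maps agree under the identification.
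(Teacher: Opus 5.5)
Your proof is correct and follows the paper's own argument: \(\eta_H\in E_{\two}\) by \cref{prop:H-order}(ii), so \cref{thm:rigidity} would force \(\eta_H\) to be an isomorphism, which is impossible because \(\widehat H\) has a greatest element and \(H\) does not. Your Steps 1 and 2, namely Scott continuity of \(\eta_H\) and the correspondence between \(\eta_H^*\) and \(\eta_H^{-1}\), only spell out what the paper leaves implicit.
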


\begin{proof}
By \cref{prop:H-order}, \(\eta_H\in E_{\two}\). If \(H\in\Rtwo\), then \cref{thm:rigidity} would make \(\eta_H\) an isomorphism. This is impossible because \(\widehat H\) has a greatest element and \(H\) does not. Hence \(H\notin\Rtwo\), proving the strict inclusion.
\end{proof}

\begin{remark}
The same pair \((H,\widehat H)\) witnesses the failure of \(\Gamma\)-faithfulness in \(\DCPO\): inverse image along \(\eta_H\) yields \(\Gamma(\widehat H)\cong\Gamma(H)\), but the two dcpos are not isomorphic \cite{ho2018}.
\end{remark}

% ============================================================
% Section 4
% ============================================================

\section{Limits, Skula-closed sub-dcpos, and sober dcpos}
\label{sec:closure}

We next determine several classes of dcpos contained in \(\Rtwo\). A Skula-closed sub-dcpo can be represented as an equalizer of maps into a power of \(\two\); closure under limits then yields the main results of this section.

\subsection{The Skula topology}

Let \(X\) be a \(T_0\) space, with specialization order
\(\leq_X\). For \(x\in X\), put
\(
  {\downarrow} x
  =
  \{y\in X:y\leq_Xx\}.
\)

\begin{definition}[{\cite[V-5.32]{gierz2003}}]
The \emph{Skula topology}, also called the \emph{\(b\)-topology},
on \(X\) is the topology generated by the subbasis
\(
  \mathcal O(X)\cup\{{\downarrow} x:x\in X\}.
\)
A subset that is closed in the Skula topology is called
\emph{Skula closed}, or \emph{\(b\)-closed}.
\end{definition}

Since
\(
  {\downarrow} x=\overline{\{x\}}
\)
is closed in \(X\), the Skula topology is finer than the original
topology.

\begin{lemma}
\label{lem:skula-normal-form}
Let \(X\) be a \(T_0\) space, and let \(A\subseteq X\) be Skula
closed. Then there exist a set \(I\) and open subsets
\(
  V_i\subseteq U_i\subseteq X 
\) for $i \in I$
such that
\(
  A
  =
  \bigcap_{i\in I}
  \bigl(U_i^c\cup V_i\bigr).
\)
\end{lemma}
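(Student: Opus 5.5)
The plan is to start from the subbasis of the Skula topology and move to complements. The Skula-open sets are unions of finite intersections of subbasic sets. Each subbasic set is either an open set \(W\in\mathcal O(X)\) or a principal down-set \({\downarrow} x\), and \({\downarrow} x\) is closed in \(X\). So a basic Skula-open set has the form \(W\cap C\), where \(W\) is open and \(C\) is closed. This is because a finite intersection of open sets is open and a finite intersection of closed sets is closed. An empty intersection gives \(X\) itself, which is the case \(W=X\), \(C=X\). Writing \(C=U^c\) with \(U\) open, every Skula-open set is a union \(\bigcup_{i\in I}(W_i\cap U_i^c)\) of such basic sets.

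Next, let \(A\) be Skula closed. Then \(A^c\) is Skula open, so \(A^c=\bigcup_{i\in I}(W_i\cap U_i^c)\) for some family of open sets \(W_i,U_i\). Taking complements gives
\[
  A=\bigcap_{i\in I}\bigl(W_i^c\cup U_i\bigr).
\]
Each factor is the complement of \(W_i\cap U_i^c=W_i\setminus U_i\).

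The last step arranges the required nesting \(V_i\subseteq U_i\). We replace the pair \((W_i,U_i)\) by \((U_i',V_i')\), where \(U_i'=W_i\) and \(V_i'=W_i\cap U_i\). Both are open and \(V_i'\subseteq U_i'\). We also have \(W_i\setminus U_i=U_i'\setminus V_i'\), so \(W_i^c\cup U_i=(U_i')^c\cup V_i'\) as complements of the same set. Renaming \(U_i'\) and \(V_i'\) as \(U_i\) and \(V_i\) gives the stated normal form \(A=\bigcap_{i\in I}(U_i^c\cup V_i)\).

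No step is a real obstacle. The only point that needs checking is that finite intersections of subbasic sets really reduce to a single open set intersected with a single closed set, using that \({\downarrow} x=\overline{\{x\}}\) is closed. The rest is the complement computation. The \(T_0\) hypothesis is used only to make \({\downarrow} x\) the closure of \(\{x\}\) with respect to the specialization order.
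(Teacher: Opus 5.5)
Your proof is correct and follows the paper's argument. The paper writes a basic Skula-open set as \(U\cap{\downarrow}x_1\cap\cdots\cap{\downarrow}x_n\) and takes \(V=U\cap\bigl(({\downarrow}x_1)^c\cup\cdots\cup({\downarrow}x_n)^c\bigr)\), which is exactly your \(W\cap U\) once you lump the closed sets \({\downarrow}x_j\) into the single closed set \(U^c\).
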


\begin{proof}
A basic Skula-open subset has the form
\(
  U\cap{\downarrow} x_1\cap\cdots\cap{\downarrow} x_n,
\)
where \(U\) is open in \(X\). Its complement is
\(
  U^c
  \cup
  ({\downarrow} x_1)^c
  \cup\cdots\cup
  ({\downarrow} x_n)^c.
\)
Put
\[
  V
  =
  U\cap
  \bigl(
    ({\downarrow} x_1)^c
    \cup\cdots\cup
    ({\downarrow} x_n)^c
  \bigr).
\]
The set \(V\) is open and satisfies \(V\subseteq U\). Moreover,
\(
\begin{aligned}
  U^c
  \cup
  ({\downarrow} x_1)^c
  \cup\cdots\cup
  ({\downarrow} x_n)^c
  &=
  U^c\cup V.
\end{aligned}
\)
Since every Skula-closed subset is an intersection of complements of
basic Skula-open subsets, the required representation follows.
\end{proof}

\begin{lemma}[{\cite[V-5.32 and V-5.34]{gierz2003}}]\label{lem:skula-dense-sobrification}
Let \(X\) be a \(T_{0}\) space, let \(Y\) be sober, and let
\(i:X\to Y\) be a topological embedding. Then \(i\) is a sobrification
map if and only if \(i(X)\) is Skula dense in \(Y\). Equivalently, this
holds if and only if \(i(X)\cap U\cap{\downarrow} y\neq\varnothing\) for
every open subset \(U\subseteq Y\) and every \(y\in U\).
\end{lemma}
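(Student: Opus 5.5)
We prove the two equivalences separately: first that Skula density of \(i(X)\) is the same as the pointwise condition, and then that the pointwise condition characterizes sobrification maps. For the latter we compare \(i\) with the standard sobrification \(\eta_X:X\to S(X)\) of \cref{prop:standard-sobrification}.

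\emph{Skula density versus the pointwise condition.} A basic Skula-open set has the form \(U\cap{\downarrow} y_1\cap\cdots\cap{\downarrow} y_n\).

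Suppose this set is nonempty and contains a point \(z\). Then \(z\in U\), and \(U\cap{\downarrow} z\) is contained in the basic set. So the pointwise condition applied to \(U\) and \(z\) gives a point of \(i(X)\) in the basic set, and \(i(X)\) meets every nonempty basic Skula-open set.

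Conversely, for \(y\in U\) the set \(U\cap{\downarrow} y\) is a nonempty Skula-open set. Hence Skula density forces \(i(X)\cap U\cap{\downarrow} y\neq\varnothing\). This settles the second equivalence.

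\emph{The comparison map.} Since \(Y\) is sober, the universal property gives a unique continuous \(j:S(X)\to Y\) with \(j\circ\eta_X=i\). Concretely, \(j(A)\) is the generic point of \(\overline{i(A)}\). Then \(i\) is a sobrification map if and only if \(j\) is a homeomorphism.

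\emph{Pointwise condition implies sobrification.} Assume the pointwise condition and check that \(j\) is a homeomorphism.
\begin{enumerate}[label=\textup{(\roman*)}]
\item \emph{Surjectivity.} Given \(y\in Y\), put \(A=i^{-1}({\downarrow} y)\), which is closed in \(X\). We claim \(\overline{i(A)}={\downarrow} y\). Otherwise some open \(U\ni y\) misses \(i(A)\). But the condition produces \(i(x)\in U\cap{\downarrow} y\), and then \(x\in A\), a contradiction. Since \({\downarrow} y\) is irreducible and \(i\) is an embedding, \(A\) is irreducible, and \(j(A)=y\).
\item \emph{Injectivity.} Because \(i\) is an embedding, every closed \(A\subseteq X\) satisfies \(A=i^{-1}\bigl(\overline{i(A)}\bigr)\). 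So \(A\) is recovered from \(j(A)\).
\item \emph{Openness.} We have \(j^{-1}(U)=\Diamond\, i^{-1}(U)\), and every open set of \(X\) has the form \(i^{-1}(U)\). Hence every open set \(\Diamond V\) of \(S(X)\) is of the form \(j^{-1}(U)\), so the bijection \(j\) is open.
\end{enumerate}

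\emph{Sobrification implies the pointwise condition.} If \(i\) is a sobrification map, \(j\) is a homeomorphism commuting with the embeddings. So it suffices to verify the condition for \(\eta_X\) in \(S(X)\). Take a basic open \(\Diamond V\) and a point \(A\in\Diamond V\). Pick \(x\in A\cap V\). Then \({\downarrow} x\in\Diamond V\) and \({\downarrow} x\subseteq A\), i.e. \(\eta_X(x)\in\Diamond V\cap{\downarrow} A\). An arbitrary open set containing \(A\) contains such a basic neighbourhood, so this suffices.

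The main obstacle is the surjectivity step in (i): one must show that \(i^{-1}({\downarrow} y)\) is irreducible with closure image exactly \({\downarrow} y\). This is the only place where the density condition is genuinely used, and we use it there as described.
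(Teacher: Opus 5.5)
Your proof is correct. The paper contains no proof of this lemma; it imports the result from \cite[V-5.32 and V-5.34]{gierz2003}. Your argument is therefore a self-contained replacement rather than a variant of an internal proof.

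You compare \(i\) with the concrete model \(\eta_X:X\to S(X)\) of \cref{prop:standard-sobrification} and verify by hand that the comparison map \(j\) is a continuous open bijection. The density condition enters only in the surjectivity step, through the explicit preimage \(A=i^{-1}({\downarrow}y)\). One wording point there: \(A\) is irreducible because \(i(A)\) has irreducible closure \({\downarrow}y\) and \(i\) restricts to a homeomorphism \(A\cong i(A)\). That is what your sentence means, but it is the closure of \(i(A)\), not \({\downarrow}y\) alone, that does the work.

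The more common proof is frame-theoretic. For an embedding, \(i^{-1}:\mathcal O(Y)\to\mathcal O(X)\) is automatically surjective. The pointwise condition is exactly injectivity of \(i^{-1}\), because for \(y\in U\) the open sets \(U\) and \(U\setminus{\downarrow}y\) differ precisely on \(U\cap{\downarrow}y\). One then uses that a sober space is recovered from its frame of opens, so \(i\) is a sobrification if and only if \(i^{-1}\) is a frame isomorphism.

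That route is shorter, and it is the same \(W\setminus{\downarrow}y\) trick the paper uses in \cref{lem:skula-density-equable} and \cref{thm:internal-trace-criterion}. It does, however, rely on the duality between sober spaces and spatial frames. Your route needs only the universal property of \(S(X)\). In exchange, it exhibits the irreducible closed set attached to each \(y\in Y\), namely \(i^{-1}({\downarrow}y)\). This is the topological counterpart of the map \(\rho_e(y)=\{x:e(x)\le y\}\) in \cref{lem:internal-representation}.
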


\begin{lemma}[{\cite[Corollary~3.5]{Keimel2009}}]\label{lem:sober-skula}
Let \(Y\) be a sober space, and let \(X\subseteq Y\) have the subspace
topology. Then \(X\) is sober if and only if it is Skula closed in \(Y\).
\end{lemma}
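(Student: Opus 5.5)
Since the statement is Lemma~\ref{lem:sober-skula}, quoted from \cite[Corollary~3.5]{Keimel2009}, I will sketch a direct argument based on Lemma~\ref{lem:skula-dense-sobrification}, reducing both directions to Skula density of \(X\) in its Skula closure.

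For the direction ``Skula closed implies sober'', let \(X\subseteq Y\) be Skula closed and let \(A\subseteq X\) be a nonempty irreducible closed subset of \(X\). Its closure \(\overline A\) in \(Y\) is irreducible closed, so by sobriety of \(Y\) it equals \({\downarrow}y\) for a unique \(y\in Y\). I will show \(y\in X\). If \(y\notin X\), then since \(X\) is Skula closed there is a basic Skula-open neighbourhood \(U\cap{\downarrow}x_1\cap\cdots\cap{\downarrow}x_n\) of \(y\) disjoint from \(X\), with \(U\) open in \(Y\). Because \(y\le x_k\) gives \({\downarrow}y\subseteq{\downarrow}x_k\), the set \(U\cap{\downarrow}y\) is also disjoint from \(X\). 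On the other hand, \(y\in U\) and \(y\in\overline A\) mean that \(U\) meets \(A\). Moreover \(A\subseteq{\downarrow}y\), so \(U\cap{\downarrow}y\cap X\supseteq U\cap A\neq\varnothing\), which is a contradiction. Hence \(y\in X\), and \(A=\overline A\cap X={\downarrow}y\cap X\) is the closure of \(\{y\}\) in \(X\). Uniqueness of this generic point follows because \(X\) is \(T_0\). So \(X\) is sober.

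For the converse, let \(X\) be sober and let \(Z\) be the Skula closure of \(X\) in \(Y\). By the first direction, \(Z\) is sober. The inclusion \(X\hookrightarrow Z\) is a topological embedding with Skula dense image; here I must check that the Skula topology of the subspace \(Z\) is the restriction of the Skula topology of \(Y\), which holds because specialization orders restrict. By Lemma~\ref{lem:skula-dense-sobrification}, the inclusion is a sobrification map. Since \(X\) is already sober, \(\id_X\) is also a sobrification, and uniqueness of sobrifications up to isomorphism over \(X\) forces the inclusion \(X\hookrightarrow Z\) to be a homeomorphism. Hence \(X=Z\) is Skula closed.

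The main obstacle is this compatibility of the Skula topology with subspaces. Alternatively, one can argue directly: for \(z\in Z\), the set \({\downarrow}z\cap X\) is irreducible closed in \(X\), because the density criterion in Lemma~\ref{lem:skula-dense-sobrification} gives nonempty traces. It therefore equals \({\downarrow}x\cap X\) for some \(x\in X\), and by density together with \(T_0\) we get \(z=x\).
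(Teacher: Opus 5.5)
Your proof is correct. The paper gives no proof of this lemma; it is imported from Keimel--Lawson. So your write-up is a self-contained substitute rather than a variant of an argument in the text.

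Your forward direction is a clean first-principles proof. It uses only sobriety of $Y$ and the form $U\cap{\downarrow}x_1\cap\cdots\cap{\downarrow}x_n$ of basic Skula-open sets, and the observation ${\downarrow}y\subseteq{\downarrow}x_k$ is exactly the key step.

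In the converse, the subspace-compatibility issue you flag is milder than it looks. Density passes from a finer topology to a coarser one. The fact that specialization orders restrict gives exactly the inclusion you need: the Skula topology of $Z$ is coarser than the trace on $Z$ of the Skula topology of $Y$. Equality also holds, since every closed set $C$ equals $\bigcup_{c\in C}{\downarrow}c$. Hence the Skula topology is generated by all open and all closed sets, and these restrict correctly to subspaces.

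Your alternative direct argument is the more economical one. For $z\in Z$ and $V$ open with $z\in V$, the set $V\cap{\downarrow}z$ is a Skula-open neighbourhood of $z$, so it meets $X$ by the definition of Skula closure. Thus neither \cref{lem:skula-dense-sobrification} nor uniqueness of sobrifications is needed. This gives irreducibility of ${\downarrow}z\cap X$, and the choice $V=Y\setminus{\downarrow}x$ rules out $z\nleq x$.

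The two routes buy different things. The sobrification route shows more: for an arbitrary subspace $X$ of a sober space, the inclusion of $X$ into its Skula closure is a sobrification map. The direct route makes this part of the paper independent of both cited results.
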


\subsection{Skula-closed sub-dcpos}

\begin{lemma}
\label{lem:skula-equalizer}
Let \(D\) be a dcpo, and let \(A\subseteq D\) be Skula closed in the
Scott space \(\Sigma D\). Then there exist a set \(I\) and
Scott-continuous maps
\(
  f,g:D\longrightarrow\two^I
\)
such that
\(
  A=\Eq(f,g).
\)
In particular, \(A\), equipped with the order inherited from \(D\),
is a dcpo.
\end{lemma}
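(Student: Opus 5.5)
Use the normal form of \cref{lem:skula-normal-form} to write \(A\) as an intersection of conditions of the shape ``\(x\in U_i\Rightarrow x\in V_i\)''. Each such condition is an equation between two maps into \(\two\); bundling them over \(I\) gives maps into \(\two^I\).

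Concretely, applying \cref{lem:skula-normal-form} to the \(T_0\) space \(\Sigma D\) yields a set \(I\) and Scott-open sets \(V_i\subseteq U_i\subseteq D\) with
\[
  A=\bigcap_{i\in I}\bigl(U_i^c\cup V_i\bigr).
\]
For each \(i\), the characteristic maps \(\chi_{U_i},\chi_{V_i}:D\to\two\) are Scott continuous, because a map into \(\two\) is Scott continuous exactly when the preimage of \(1\) is Scott open. By the description of products in \cref{lem:reflective-closure}(1), they induce Scott-continuous maps
\[
  f=(\chi_{U_i})_{i\in I},\qquad g=(\chi_{V_i})_{i\in I}:D\longrightarrow\two^I .
\]
Here one should check that a map into a product dcpo is Scott continuous iff each coordinate is. This holds because directed suprema in \(\two^I\) are computed coordinatewise.

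Next compute the equalizer. Since \(V_i\subseteq U_i\), we have \(\chi_{V_i}\le\chi_{U_i}\), so \(\chi_{U_i}(x)=\chi_{V_i}(x)\) fails only when \(x\in U_i\setminus V_i\). Hence \(\chi_{U_i}(x)=\chi_{V_i}(x)\) iff \(x\in U_i^c\cup V_i\). Taking all coordinates together gives \(\Eq(f,g)=\bigcap_i(U_i^c\cup V_i)=A\).

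For the last claim, \cref{lem:reflective-closure}(1) says the equalizer \([f=g]\) with the induced order is the equalizer in \(\DCPO\), so \(A\) is a dcpo. One can also verify this directly. If \((x_j)\) is a directed subset of \(A\), continuity gives \(f(\bigvee x_j)=\bigvee f(x_j)=\bigvee g(x_j)=g(\bigvee x_j)\). So \(A\) is closed under directed suprema of \(D\) and is a sub-dcpo.

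There is no real obstacle. The only points needing care are the coordinatewise continuity into \(\two^I\) and the use of \(V_i\subseteq U_i\) in identifying the equalizer. The degenerate case \(I=\varnothing\) gives \(A=D\) with \(\two^\varnothing\) a singleton, which is consistent.
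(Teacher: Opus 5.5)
Your proposal is correct and follows essentially the same route as the paper: the Skula normal form, characteristic maps of the \(U_i\) and \(V_i\) bundled into \(\two^I\), and the observation that \(V_i\subseteq U_i\) makes coordinatewise equality equivalent to \(x\in U_i^c\cup V_i\). Swapping the roles of \(f\) and \(g\) is immaterial, and your direct check that \(A\) is closed under directed suprema is a harmless supplement to the paper's appeal to equalizers in \(\DCPO\).
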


\begin{proof}
By \cref{lem:skula-normal-form}, there exist Scott-open subsets
\(
  V_i\subseteq U_i\subseteq D
  \) for \( i\in I
\)
such that
\[
  A
  =
  \bigcap_{i\in I}
  \bigl(U_i^c\cup V_i\bigr).
\]

Define maps \(f,g:D\to\two^I\) by
\(
  f(x)(i)=\chi_{V_i}(x),
  \
  g(x)(i)=\chi_{U_i}(x).
\)
Since \(U_i\) and \(V_i\) are Scott open, their characteristic maps into \(\two\) are Scott continuous. Hence \(f\) and \(g\) are Scott
continuous.

Because \(V_i\subseteq U_i\),
\(
  f(x)(i)=g(x)(i)
\)
if and only if
\(
  x\notin U_i
  \ \text{or}\
  x\in V_i.
\)
Consequently,
\[
\begin{aligned}
  \Eq(f,g)
  &=
  \{x\in D:
    f(x)(i)=g(x)(i)
    \text{ for every }i\in I\}\\
  &=
  \bigcap_{i\in I}
  \bigl(U_i^c\cup V_i\bigr)
  =A.
\end{aligned}
\]
Since equalizers in \(\DCPO\) are dcpos with the inherited order,
\(A\) is a dcpo.
\end{proof}

\begin{theorem}
\label{thm:skula-closed}
Let \(D\in\Rtwo\), and let \(A\subseteq D\) be Skula closed in
\(\Sigma D\). Then
\(
  A\in\Rtwo.
\)
\end{theorem}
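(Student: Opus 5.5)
By \cref{lem:skula-equalizer}, there are a set \(I\) and Scott-continuous maps \(f,g:D\to\two^I\) with \(A=\Eq(f,g)\). By \cref{lem:reflective-closure}(1), this subset, with the inherited order and the inclusion \(\pi:A\to D\), is exactly the equalizer of \(f\) and \(g\) in \(\DCPO\). The plan is therefore to realize \(A\) as a limit of a diagram whose objects all lie in \(\Rtwo\), and then to invoke closure of reflective full subcategories under limits.

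The steps, in order, are as follows.

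First, \(\Rtwo\) is reflective: this follows from \cref{thm:BKS} together with \cref{thm:replete-hull}, which gives \(\Rtwo=\ReflHull_{\DCPO}(\two)\).

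Second, \(\two\in\Rtwo\), by the definition of the reflective hull. Since products in \(\DCPO\) are the pointwise-ordered Cartesian products (\cref{lem:reflective-closure}(1)), and reflective full subcategories are closed under limits computed in \(\DCPO\) (\cref{lem:reflective-closure}(2)), the power \(\two^I\) lies in \(\Rtwo\).

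Third, the parallel pair \(f,g:D\rightrightarrows\two^I\) is a diagram in \(\Rtwo\): the object \(D\) lies there by hypothesis, \(\two^I\) by the second step, and the arrows are morphisms of \(\Rtwo\) because the subcategory is full. Its limit in \(\DCPO\) is \(\Eq(f,g)=A\), so applying \cref{lem:reflective-closure}(2) once more gives \(A\in\Rtwo\).

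I expect no serious obstacle. The only point to check is that the limit-closure statement applies to limits of diagrams in \(\Rtwo\) computed in \(\DCPO\), and that the equalizer in \(\DCPO\) carries the inherited order, so that the abstract limit is \(A\) itself rather than some dcpo merely in bijection with it. Both are supplied by \cref{lem:reflective-closure}.

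If one prefers to avoid the abstract limit-closure result, there is a direct alternative using \(\two\)-equability. Take \(e:X\to Y\) in \(E_\two\) and a map \(h:X\to A\). Extend \(\pi\circ h\) uniquely along \(e\) to \(\bar h:Y\to D\), using \(D\in\Rtwo\). Then \(f\circ\bar h\) and \(g\circ\bar h\) both extend \(f\circ\pi\circ h=g\circ\pi\circ h\) into \(\two^I\in\Rtwo\), so uniqueness of extensions forces them to be equal. Hence \(\bar h\) factors through \(A\), and the extension is unique and order-preserving because \(\pi\) is an order embedding. The limit argument above is shorter, so that is the one I will use.
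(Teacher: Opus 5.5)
Your proposal is correct and follows the paper's proof. Both write \(A=\Eq(f,g)\) for Scott-continuous \(f,g:D\to\two^I\) via \cref{lem:skula-equalizer}, note that \(\two^I\in\Rtwo\), and conclude by closure of the reflective subcategory \(\Rtwo\) under equalizers; your optional direct argument via \(\two\)-equability is also sound.
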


\begin{proof}
By \cref{lem:skula-equalizer}, there exist Scott-continuous maps
\(
  f,g:D\longrightarrow\two^I
\)
such that
\(
  A=\Eq(f,g).
\)
Moreover,
\(
  \two^I\in\Rtwo.
\)
Since \(D\in\Rtwo\) and \(\Rtwo\) is closed under equalizers,
\(
  A=\Eq(f,g)\in\Rtwo.
\)
\end{proof}

\begin{corollary}
\label{cor:scott-closed}
Let \(D\in\Rtwo\). If \(A\subseteq D\) is Scott closed, then
\(
  A\in\Rtwo.
\)
\end{corollary}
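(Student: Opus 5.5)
The plan is to reduce the statement to \cref{thm:skula-closed} by showing that every Scott-closed subset of \(D\) is Skula closed in \(\Sigma D\). The Skula topology on \(\Sigma D\) is generated by the subbasis \(\sigma(D)\cup\{{\downarrow} x:x\in D\}\), so it contains every Scott-open set. In particular, the Skula topology is finer than the Scott topology. Hence every Scott-closed set \(A\) is the complement of a Skula-open set and is therefore Skula closed.

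One can also see this through the normal form of \cref{lem:skula-normal-form}. Take a single index \(i\), with \(U=D\setminus A\), which is Scott open, and \(V=\varnothing\). Then \(V\subseteq U\), both sets are Scott open, and
\[
  U^c\cup V = A .
\]
So \(A\) is Skula closed. Moreover, the maps \(f=\chi_{\varnothing}\) and \(g=\chi_{U}\), both \(D\to\two\), realize \(A\) as the equalizer \(\Eq(f,g)\), exactly as in \cref{lem:skula-equalizer}.

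By \cref{lem:skula-equalizer}, \(A\) with the inherited order is then a dcpo. Since \(D\in\Rtwo\), \cref{thm:skula-closed} gives \(A\in\Rtwo\).

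A more direct variant avoids the Skula topology altogether. The set \(A\) is the equalizer of the two Scott-continuous maps \(\chi_{D\setminus A}\) and the constant map \(0\), both \(D\to\two\). Since \(D,\two\in\Rtwo\) and \(\Rtwo\) is closed under limits by \cref{lem:reflective-closure}(2), \(A\in\Rtwo\) follows.

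There is no real obstacle here. The only point to check is that the equalizer in \(\DCPO\) carries the inherited order, which is guaranteed by \cref{lem:reflective-closure}(1).
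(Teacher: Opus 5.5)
Your proposal is correct and matches the paper's proof, which likewise observes that the Skula topology is finer than the Scott topology, so every Scott-closed set is Skula closed, and then applies \cref{thm:skula-closed}. Your direct variant, which writes \(A\) as the equalizer of \(\chi_{D\setminus A}\) and the constant map \(0\) into \(\two\), is valid and is simply the one-index case of the equalizer argument behind that theorem.
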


\begin{proof}
Every Scott-closed subset is Skula closed because the Skula topology
is finer than the Scott topology. Apply \cref{thm:skula-closed}.
\end{proof}

\subsection{Specialization orders of sober spaces}

\cref{thm:skula-closed} gives a convenient way to produce new objects of \(\Rtwo\)
from known ones.  We next combine it with the Skula-closed
characterization of sober subspaces to show that specialization
orders of sober spaces also belong to \(\Rtwo\).

\begin{theorem}
\label{thm:sober-specialization}
Let \(\mathcal K\) be a reflective full subcategory of \(\DCPO\)
containing \(\two\). Then
\(
  \Omega X\in\mathcal K
\)
for every sober space \(X\).
In particular,
\(
  \Omega X\in\Rtwo
\)
for every sober space \(X\).
\end{theorem}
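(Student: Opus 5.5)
Since \(\mathcal K\) is reflective and contains \(\two\), \cref{lem:reflective-closure}(2) shows that \(\mathcal K\) is closed under limits; in particular every power \(\two^I\) lies in \(\mathcal K\). The plan is to embed a given sober space \(X\) as a sober subspace of a suitable power of the Sierpiński space, and then realise \(\Omega X\) as an equalizer of two Scott-continuous maps into another power of \(\two\).

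\textbf{Step 1 (embedding).} Put \(I=\mathcal O(X)\) and define \(j:X\to\two^I\) by \(j(x)(U)=\chi_U(x)\). Since \(X\) is \(T_0\), \(j\) is a topological embedding into the product space \((\two^I,\text{product of Sierpiński topologies})\). It is also an order embedding \(\Omega X\to\two^I\), because \(x\le_X y\) holds exactly when every open set containing \(x\) contains \(y\).

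\textbf{Step 2 (topologies on \(\two^I\)).} The product topology on \(\two^I\) is sober, being a product of sober spaces. Its specialization order is the pointwise order. For powers of \(\two\), the product topology need not coincide with the Scott topology, but it is always coarser, since each subbasic open set \(\{f: f(U)=1\}\) is Scott open. By \cref{lem:sober-skula}, \(j(X)\) is Skula closed in the sober product space. Applying \cref{lem:skula-normal-form} to that product topology writes \(j(X)=\bigcap_k (U_k^c\cup V_k)\) with \(V_k\subseteq U_k\) product-open, hence Scott open in the dcpo \(\two^I\).

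\textbf{Step 3 (equalizer).} Exactly as in \cref{lem:skula-equalizer}, define \(f,g:\two^I\to\two^K\) by the characteristic maps of \(V_k\) and \(U_k\). Both maps are Scott continuous, and \(\Eq(f,g)=j(X)\). Equalizers in \(\DCPO\) carry the inherited order, and \(j\) is an order embedding, so \(\Omega X\cong\Eq(f,g)\), a limit of objects of \(\mathcal K\). Consequently \(\Omega X\in\mathcal K\). This also shows, as a byproduct, that \(\Omega X\) is a dcpo. The statement for \(\Rtwo\) follows by taking \(\mathcal K=\Rtwo\), which is reflective and contains \(\two\) by \cref{thm:replete-hull}.

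\textbf{Main obstacle.} The delicate point is the mismatch between the product topology on \(\two^I\) and its Scott topology. The argument must use \cref{lem:sober-skula} with the sober product topology, not with \(\Sigma(\two^I)\), and then observe that product-open sets are Scott open, so that the characteristic maps are Scott continuous. One also has to check that the order inherited by the equalizer agrees with \(\le_X\); this follows from \(j\) being an order embedding.
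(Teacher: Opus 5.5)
Your proof is correct and follows the paper's route: the evaluation embedding \(X\to\two^{\mathcal O(X)}\), Skula-closedness of the image via \cref{lem:sober-skula}, the normal form of \cref{lem:skula-normal-form} turned into an equalizer of two Scott-continuous maps into a power of \(\two\), and closure of \(\mathcal K\) under limits. One correction: the product and Scott topologies on \(\two^I\) always coincide, because \(\two^I\cong\mathcal P(I)\) is an algebraic lattice whose compact elements are the finite subsets, so the Scott basis \(\{{\uparrow}F: F \text{ finite}\}\) is exactly the product basis (the paper uses this), and your ``main obstacle'' is therefore not a real one --- your argument only relies on the true inclusion of product-open sets in Scott-open sets, so it stays valid.
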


\begin{proof}
Let
\(
  \tau=\mathcal O(X).
\)
Consider the evaluation map
\(
  e_X:X\longrightarrow\two^\tau
\)
defined by
\[
  e_X(x)(U)=
  \begin{cases}
    1,&x\in U,\\
    0,&x\notin U,
  \end{cases}
  \  U\in\tau.
\]

Since \(X\) is \(T_0\), its open subsets separate distinct points.
Hence \(e_X\) is injective. Moreover, for every \(U\in\tau\),
\(
  U
  =
  e_X^{-1}
  \bigl(
    \{p\in\two^\tau:p(U)=1\}
  \bigr).
\)
Thus \(e_X\) is a topological embedding.

The pointwise order identifies \(\two^\tau\) with the algebraic complete lattice \(\mathcal P(\tau)\). Its Scott topology has the basis \(\{{\uparrow} F:F\subseteq\tau\text{ finite}\}\), which is exactly the product Sierpi\'nski topology. It is sober by \cite[Theorem~II-1.21]{gierz2003}.

Since \(X\) is sober, its embedded image \(e_X(X)\) is sober. By
\cref{lem:sober-skula}, the subset \(e_X(X)\) is Skula closed in
\(\two^\tau\). Hence, by \cref{lem:skula-equalizer}, there exist a set
\(I\) and Scott-continuous maps
\(
  f,g:\two^\tau\longrightarrow\two^I
\)
such that
\(
  e_X(X)=\Eq(f,g).
\)

Since \(\mathcal K\) contains \(\two\) and is closed under limits,
\(
  \two^\tau\in\mathcal K
  \ \text{and}\
  \two^I\in\mathcal K.
\)
Closure under equalizers now gives
\(
  e_X(X)\in\mathcal K.
\)

It remains to compare the orders. For \(x,y\in X\),
\[
\begin{aligned}
  x\leq_Xy
  &\Longleftrightarrow
  \text{every open set containing \(x\) also contains \(y\)}\\
  &\Longleftrightarrow
  e_X(x)\leq e_X(y).
\end{aligned}
\]
Therefore \(e_X\) induces an order isomorphism
\(
  \Omega X\cong e_X(X).
\)
It follows that
\(
  \Omega X\in\mathcal K.
\)

Taking \(\mathcal K=\Rtwo\) proves the final assertion.
\end{proof}

\begin{corollary}
\label{cor:sober-consequences}
The following dcpos belong to $\Rtwo$.
\begin{enumerate}[label=\textup{(\roman*)}]
\item Every Scott-sober dcpo.
\item Every discrete dcpo.
\item For every dcpo \(D\),
\(
  \widehat D=\Omega S(\Sigma D)\in\Rtwo.
\)
\item If
\(
  \eta_D[D]
  =
  \{{\downarrow} x:x\in D\}
\)
is Skula closed in the standard sobrification \(S(\Sigma D)\), then
\(
  D\in\Rtwo.
\)
\item If \(\eta_D[D]\) is Scott closed in \(\widehat D\), then
\(
  D\in\Rtwo.
\)
\item Every complete lattice.
\end{enumerate}
\end{corollary}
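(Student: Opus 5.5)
The plan is to derive every item from \cref{thm:sober-specialization} and \cref{thm:skula-closed}, together with the closure of \(\Rtwo\) under limits and retracts (\cref{lem:reflective-closure}).

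For (i), a Scott-sober dcpo \(D\) is one for which \(\Sigma D\) is sober. The specialization order of the Scott topology is the original order, so \(\Omega\Sigma D=D\). Then \cref{thm:sober-specialization} gives \(D\in\Rtwo\).

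For (ii), a discrete dcpo \(D\) is the specialization order of the discrete topology on the same set, which is sober. So \cref{thm:sober-specialization} applies again. Alternatively, every subset of a discrete dcpo is a Scott-closed sub-dcpo of the discrete dcpo \(\two^{D}\)-free product, but the first route is cleaner.

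For (iii), \(S(\Sigma D)\) is sober by \cref{prop:standard-sobrification}. Hence \(\widehat D=\Omega S(\Sigma D)\in\Rtwo\) by \cref{thm:sober-specialization}.

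For (iv), we want to apply \cref{thm:skula-closed} to \(\widehat D\in\Rtwo\) and the subset \(\eta_D[D]\). The one point to check is that Skula closedness in \(S(\Sigma D)\) implies Skula closedness in the Scott space \(\Sigma\widehat D\). I expect this to be the main subtlety.

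The specialization order of \(S(\Sigma D)\) is inclusion, which is the order of \(\widehat D\). Every lower-Vietoris open set \(\Diamond U\) is an upper set. It is also Scott open in \(\widehat D\): if a directed family of irreducible closed sets has Scott-closure-of-union meeting \(U\), then the union itself meets \(U\), since \(U\) is open. Thus \(\mathcal O(S(\Sigma D))\subseteq\sigma(\widehat D)\). Both topologies have the same specialization order, so the principal downsets \({\downarrow}x\) agree. Hence the Skula topology of \(S(\Sigma D)\) is coarser than that of \(\Sigma\widehat D\), and Skula-closed sets of the former are Skula closed in the latter.

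By \cref{thm:skula-closed}, \(\eta_D[D]\in\Rtwo\). By \cref{lem:eta-dcpo-embedding}, \(\eta_D[D]\) is a sub-dcpo order-isomorphic to \(D\), so \(D\in\Rtwo\) because the subcategory is closed under isomorphism.

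For (v), a Scott-closed subset of \(\widehat D\) is a Scott-closed sub-dcpo of \(\widehat D\in\Rtwo\). Then \cref{cor:scott-closed} together with \cref{lem:eta-dcpo-embedding} gives \(D\in\Rtwo\).

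For (vi), let \(L\) be a complete lattice. I would show \(L\) is a retract of a power of \(\two\) in \(\DCPO\), using the embedding \(L\to\two^{\sigma(L)}\) and a retraction built from joins. A retraction via infima is not obviously Scott continuous, so I prefer a different route: show that \(\eta_L[L]\) is Scott closed in \(\widehat L\) and apply (v).

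To do this, let \(A\) be an irreducible Scott-closed subset of \(L\), and put \(a=\bigvee A\). Then \(A\subseteq{\downarrow}a\). It suffices to show \(A={\downarrow}a\) whenever \(A\) is a directed-sup limit, but in general this fails, so (v) may not apply directly.

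The fallback is to identify \(\eta_L[L]\) as a Skula-closed subset of \(\widehat L\) and use (iv). The map \(\widehat L\to L\), \(A\mapsto\bigvee A\), is Scott continuous, since the join of the closure of a directed union is the directed join of the joins. Then \(\eta_L[L]\) is the equalizer of \(\id_{\widehat L}\) and \(\eta_L\circ\bigvee\). Here \(\eta_L\circ\bigvee\) is Scott continuous because \(\eta_L\) preserves directed suprema by \cref{lem:eta-dcpo-embedding}.

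Therefore \(L\cong\eta_L[L]\) is an equalizer of maps between objects of \(\Rtwo\), so \(L\in\Rtwo\). The main obstacle is verifying Scott continuity of \(A\mapsto\bigvee A\). Since \(\bigvee\cl(\bigcup A_i)=\bigvee_i\bigvee A_i\), because closure does not enlarge the join, this step is routine.
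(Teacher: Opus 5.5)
\textbf{Verdict.} Your proposal is correct. Items (i), (ii), (iii) and (v) follow the paper's proof, but you take different routes for (iv) and (vi).

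\textbf{Item (iv).} The paper applies the Keimel--Lawson fact (\cref{lem:sober-skula}) that a Skula-closed subspace of the sober space $S(\Sigma D)$ is sober. It concludes that $\Sigma D$ is sober and reduces to (i). You instead transfer Skula-closedness from $S(\Sigma D)$ to $\Sigma\widehat D$, using two facts: lower-Vietoris opens are Scott open (the argument of \cref{lem:diamond-open-extension}), and both specialization orders are inclusion. You then apply \cref{thm:skula-closed} to $\widehat D\in\Rtwo$.

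Your route stays inside the paper's equalizer machinery and avoids the external sobriety lemma. The paper's route makes visible that the hypothesis of (iv) actually forces $D$ to be Scott sober. Indeed, $\eta_D[D]$ is also Skula dense by \cref{lem:skula-dense-sobrification}, so being Skula closed it equals $\widehat D$.

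\textbf{Item (vi).} The paper cites sobriety of the upper topology on a complete lattice and applies \cref{thm:sober-specialization}. You instead show that $s(A)=\bigvee A$ is a Scott-continuous retraction $\widehat L\to L$ of $\eta_L$; continuity holds because $\bigvee\cl(B)=\bigvee B$. Hence $L\cong\Eq(\id_{\widehat L},\eta_L\circ s)$, i.e.\ $L$ is a retract of $\widehat L$, and it lies in $\Rtwo$ by (iii) and closure under limits.

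Your argument is self-contained. It also works verbatim for every dcpo in which each irreducible Scott-closed subset has a supremum.

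\textbf{Expository fixes.}
\begin{itemize}
\item The aside in (ii) about ``$\two^{D}$'' is garbled and should be deleted.
\item In (vi), the signpost ``use (iv)'' is wrong. The hypothesis of (iv) would force $L$ to be Scott sober, which fails for non-sober complete lattices such as Isbell's. The equalizer argument you actually give does not depend on it.
\item You were right to abandon the attempt via (v): $\eta_L[L]$ is generally not even a lower set in $\widehat L$.
\end{itemize}
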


\begin{proof}
For \textup{(i)}, if \(D\) is Scott sober, then \(\Sigma D\) is
sober and
\(
  \Omega(\Sigma D)=D.
\)
The conclusion follows from \cref{thm:sober-specialization}.

Statement \textup{(ii)} follows from \textup{(i)}, since every
discrete space is sober.

For \textup{(iii)}, the standard sobrification \(S(\Sigma D)\) is
sober and its specialization order is
\(
  \Omega S(\Sigma D)=\widehat D.
\)
Apply \cref{thm:sober-specialization}.

For \textup{(iv)}, put \(A=\eta_D[D]\). Since \(A\) is Skula
closed in the sober space \(S(\Sigma D)\), \cref{lem:sober-skula}
implies that \(A\), with the subspace topology, is sober. The map
\(\eta_D:\Sigma D\to A\) is a homeomorphism, so \(\Sigma D\) is sober.
Thus \(D\) is Scott sober, and \textup{(i)} gives \(D\in\Rtwo\).

Statement \textup{(v)} follows from \textup{(iii)} and
\cref{cor:scott-closed}.

For \textup{(vi)}, let \(L\) be a complete lattice and equip it with
its upper topology \(\nu(L)\), generated by the complements of
principal ideals. The space
\(
  (L,\nu(L))
\)
is sober, and its specialization order is the original order of
\(L\) \cite[Chapter~III, Section~3]{gierz2003}. Hence
\(
  L=\Omega(L,\nu(L))\in\Rtwo
\)
by \cref{thm:sober-specialization}.
\end{proof}

\begin{remark}
Part \textup{(iii)} uses only the facts that \(S(\Sigma D)\) is sober
and that
\(
  \widehat D=\Omega S(\Sigma D).
\)
It does not assert that the lower-Vietoris topology of
\(S(\Sigma D)\) coincides with the Scott topology of \(\widehat D\).
\end{remark}

\subsection{Bounded-complete dcpos}

\begin{corollary}
\label{cor:bounded-complete}
Every bounded-complete dcpo belongs to \(\Rtwo\), where bounded-complete means that it has a least element $\bot$ and every nonempty bounded subset has a
supremum.
\end{corollary}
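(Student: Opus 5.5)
The plan is to realize a bounded-complete dcpo \(D\) as a Scott-closed subset of a complete lattice, and then combine \cref{cor:sober-consequences}\textup{(vi)} with \cref{cor:scott-closed}. The natural candidate is \(L=D\cup\{\top\}\), obtained by adjoining a new top element.

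First I will check that \(L\) is a complete lattice. For \(S\subseteq L\), if \(\top\in S\) or \(S\setminus\{\top\}\) has no upper bound in \(D\), put \(\bigvee S=\top\). Otherwise \(S\setminus\{\top\}\) is bounded in \(D\). It then has a supremum in \(D\): this is bounded completeness when the set is nonempty, and it is \(\bot\) when the set is empty. One verifies that this supremum is also the least upper bound in \(L\). Every subset of \(L\) therefore has a join, so \(L\) is a complete lattice, and \(L\in\Rtwo\) by \cref{cor:sober-consequences}\textup{(vi)}.

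Next I will show that \(D\) is Scott closed in \(L\). It is a lower set, since nothing except \(\top\) lies below \(\top\). For closure under directed suprema, let \(\Delta\subseteq D\) be directed. Then \(\Delta\) has a supremum \(d\) in \(D\), because \(D\) is a dcpo. This \(d\) is an upper bound of \(\Delta\) lying in \(D\), so the join of \(\Delta\) in \(L\) is computed in \(D\) and equals \(d\ne\top\). Hence \(D\) is Scott closed, and its induced order is the original one. \cref{cor:scott-closed} then gives \(D\in\Rtwo\).

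The main point to watch is the supremum computation: directed suprema in \(L\) of subsets of \(D\) must agree with those in \(D\), and this is where the dcpo hypothesis is used. The existence of all joins in \(L\) is where bounded completeness is used.
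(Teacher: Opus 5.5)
Your proposal is correct and follows essentially the same route as the paper: adjoin a top to obtain the complete lattice \(D\cup\{\top\}\), which lies in \(\Rtwo\) by \cref{cor:sober-consequences}\textup{(vi)}, then note that \(D\) is Scott closed in it and apply \cref{cor:scott-closed}. Your explicit check that directed suprema of subsets of \(D\) are the same in \(D\) and in \(D\cup\{\top\}\) is the detail the paper states without verification, and it is handled correctly.
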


\begin{proof}
Let \(D^\top=D\cup\{\top\}\), where \(x\leq\top\) for every \(x\in D\).
Every subset \(A\subseteq D^\top\) has a supremum: it is \(\bot\) if
\(A=\varnothing\), it is \(\top\) if \(\top\in A\) or if \(A\subseteq D\)
is unbounded in \(D\), and otherwise it is \(\sup_D A\). By the
bounded-completeness convention, \(D\) has a least element \(\bot\).
Hence every nonempty subset of \(D^\top\) has a nonempty set of lower
bounds, whose supremum is its infimum; the infimum of the empty set is
\(\top\). Thus \(D^\top\) is a complete lattice and belongs to \(\Rtwo\)
by \cref{cor:sober-consequences}.

The subset \(D\) is lower in \(D^\top\) and is closed under directed
suprema computed in \(D^\top\). It is therefore Scott closed. By
\cref{cor:scott-closed}, \(D\in\Rtwo\).
\end{proof}
\begin{remark}
The main conclusions of this section may be summarized as follows:
\[
\left\{
\begin{array}{c}
\text{specialization orders of sober spaces},\\
\text{Scott-sober dcpos},\\
\text{discrete dcpos},\\
\text{complete lattices},\\
\text{bounded-complete dcpos},\\
\text{order sobrifications }\widehat D,\\
\text{Skula-closed sub-dcpos of objects of }\Rtwo
\end{array}
\right\}
\subseteq\Rtwo.
\]
In particular,
\(
  \widehat D\in\Rtwo
\)
for every dcpo \(D\).

\end{remark}

\section{\texorpdfstring{$\Gamma$}{Gamma}-faithfulness and maximality}\label{sec:gamma}

We now turn from closure properties to the question of
$\Gamma$-faithfulness.  \cref{lem:Gamma-unit} first shows that the reflection unit \(\lambda_D:D\to \mathbf{L_2}D\) preserves the lattices of Scott-open and Scott-closed subsets.
The remaining problem is therefore to show that two objects of \(\Rtwo\)
with isomorphic Scott-closed-set lattices must themselves be
isomorphic.

\subsection{Representations in the irreducible spectrum}

\begin{lemma}\label{lem:Gamma-unit}
For every dcpo \(D\), the reflection unit \(\lambda_D:D\to \mathbf{L_2}D\) induces isomorphisms
\(
  \lambda_D^{-1}:\sigma(\mathbf{L_2}D)\cong\sigma(D)
  \ \text{and}\
  \Gamma(\mathbf{L_2}D)\cong\Gamma(D).
\)
\end{lemma}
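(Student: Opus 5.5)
The proof is a direct application of \cref{thm:BKS} together with the identification \([D\to\two]\cong\sigma(D)\) from \cref{sec:proper}. \(\mathbf{L_2}D\) is the \(\two\)-complete reflection, i.e.\ the reflection into \(E_{\two}^\perp=\Rtwo\). By \cref{thm:BKS} with \(R=\two\), its unit \(\lambda_D\) is \(\two\)-equable. Concretely, precomposition
\[
  \lambda_D^*:[\mathbf{L_2}D\to\two]\longrightarrow[D\to\two]
\]
is an isomorphism of dcpos. (Alternatively, one can argue exactly as in the proof of \cref{thm:replete-hull}: since \(\two\in\Rtwo\), the universal property makes \(\lambda_D^*\) bijective, and bijectivity suffices for what follows.)

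Next I transport along the natural isomorphism \([E\to\two]\cong\sigma(E)\), \(u\mapsto u^{-1}(1)\). Under it, precomposition with \(\lambda_D\) becomes the inverse-image map, since \((u\circ\lambda_D)^{-1}(1)=\lambda_D^{-1}(u^{-1}(1))\). Naturality of this identification is the one thing to check, and it is immediate from that equation. Hence \(\lambda_D^{-1}:\sigma(\mathbf{L_2}D)\to\sigma(D)\) is a bijection. It is monotone, and it is a frame homomorphism because inverse images preserve unions and finite intersections. A bijective frame homomorphism has monotone inverse, so \(\lambda_D^{-1}\) is an order isomorphism. This is just the statement \(\lambda_D\in E_{\two}\).

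Finally, \(\Gamma(E)\) is the set of complements of members of \(\sigma(E)\), ordered by inclusion. Complementation \(U\mapsto E\setminus U\) is an order anti-isomorphism \(\sigma(E)\to\Gamma(E)\). Inverse images commute with complements: \(\lambda_D^{-1}(\mathbf{L_2}D\setminus V)=D\setminus\lambda_D^{-1}(V)\). Therefore \(\lambda_D^{-1}\) maps \(\Gamma(\mathbf{L_2}D)\) bijectively onto \(\Gamma(D)\), and since it preserves inclusion in both directions, this is an isomorphism of complete lattices (indeed of coframes).

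None of these steps is a real obstacle. The only point needing care is that the order on \([D\to\two]\) (pointwise) matches inclusion of Scott-open sets, which was already recorded in \cref{sec:proper}.
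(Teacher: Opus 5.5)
Your proof is correct and follows the paper's argument: \cref{thm:BKS} gives \(\two\)-equability of \(\lambda_D\), the natural identification \([E\to\two]\cong\sigma(E)\) turns \(\lambda_D^*\) into \(\lambda_D^{-1}\), and complementation gives the \(\Gamma\) statement. Your additional details are valid refinements of the same route and do not change the approach: the naturality check, and the alternative via bijectivity plus the bijective-frame-homomorphism argument from the proof of \cref{thm:replete-hull}.
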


\begin{proof}
By \cref{thm:BKS}, \(\lambda_D\) is \(\two\)-equable. Hence
\(
  \lambda_D^*:[\mathbf{L_2}D\to\two]\cong[D\to\two]
\)
is an isomorphism of dcpos. Under the natural identification of maps into \(\two\) with Scott-open sets, this is exactly the first isomorphism. Taking complements gives the second.
\end{proof}

To compare two dcpos with the same Scott-closed-set lattice, we place
them in a common dcpo. \cref{lem:realization} represents each such dcpo as a
sub-dcpo of \(\Irr(L)\), and \cref{lem:common-generated-realization} produces a common
$L$-representation containing both of them.
\begin{definition}\label{def:realization}
Let \(L\) be a coframe. The nonzero irreducible elements form a dcpo
under the inherited order. Indeed, for a directed family \((p_i)\), put
\(p=\bigvee_i p_i\). Suppose \(p\leq a\vee b\). If neither
\(p\leq a\) nor \(p\leq b\), choose \(i,j\) such that
\(p_i\nleq a\) and \(p_j\nleq b\), and then choose \(k\geq i,j\).
Since \(p_k\leq a\vee b\), irreducibility of \(p_k\) yields
\(p_k\leq a\) or \(p_k\leq b\), a contradiction. Hence \(p\) is
irreducible.

For \(a\in L\), put
\(
  C_a=\{p\in\Irr(L):p\le a\}.
\)
A sub-dcpo \(S\subseteq\Irr(L)\) is called an \emph{\(L\)-representation} if
\[
  \theta_S:L\longrightarrow\Gamma(S),
  \  a\longmapsto S\cap C_a,
\]
is an order isomorphism.
\end{definition}

Each \(C_a\) is Scott closed in \(\Irr(L)\): it is a lower set, and if a
directed family in \(C_a\) has supremum, then that supremum is still below
\(a\). Consequently \(S\cap C_a\in\Gamma(S)\) for every sub-dcpo \(S\).

\begin{lemma}\label{lem:realization}
If \(\Gamma(D)\cong L\), then \(D\) embeds isomorphically into \(\Irr(L)\) as an \(L\)-representation.
\end{lemma}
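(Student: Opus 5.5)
Fix an isomorphism \(\varphi:\Gamma(D)\to L\) and compose it with the order sobrification. Define
\[
  j:D\longrightarrow\Irr(L),\qquad x\longmapsto\varphi({\downarrow}x).
\]
Each \({\downarrow}x\) is a nonempty irreducible Scott-closed set, and \(\varphi\) is a lattice isomorphism, so it preserves nonzero irreducibility. Hence \(j\) is well defined, and \(j=\varphi|_{\widehat D}\circ\eta_D\).

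\emph{Step 1: \(j\) is a dcpo embedding.} By \cref{lem:eta-dcpo-embedding}, \(\eta_D\) is a dcpo embedding with sub-dcpo image. The restriction \(\varphi|_{\widehat D}:\widehat D\to\Irr(L)\) is an order isomorphism, since \(\varphi\) restricts to a bijection of nonzero irreducibles. Order isomorphisms preserve all existing suprema, and directed suprema in \(\Irr(L)\) are computed in \(L\) by \cref{def:realization}. Therefore \(j\) preserves directed suprema, reflects order, and \(S=j[D]\) is a sub-dcpo of \(\Irr(L)\) with \(j:D\cong S\).

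\emph{Step 2: \(\theta_S\) is an order isomorphism.} Write \(a=\varphi(A)\) with \(A\in\Gamma(D)\). For \(x\in D\),
\[
  j(x)\le a\iff\varphi({\downarrow}x)\le\varphi(A)\iff{\downarrow}x\subseteq A\iff x\in A.
\]
Hence \(S\cap C_a=j[A]\), so \(\theta_S\circ\varphi=\Gamma(j)\), where \(\Gamma(j)\) is the direct-image map \(A\mapsto j[A]\). Since \(j:D\to S\) is a dcpo isomorphism, \(\Gamma(j):\Gamma(D)\to\Gamma(S)\) is an order isomorphism. Thus \(\theta_S=\Gamma(j)\circ\varphi^{-1}\) is an order isomorphism, and \(S\) is an \(L\)-representation.

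The only point needing care is the supremum claim in Step 1. A directed supremum in \(\Irr(L)\) is the \(L\)-join \(\bigvee_i\varphi({\downarrow}x_i)=\varphi\bigl(\cl\bigcup_i{\downarrow}x_i\bigr)=\varphi({\downarrow}\bigvee_ix_i)\). This uses that \(\varphi\) preserves arbitrary joins, which holds for any isomorphism of complete lattices. Everything else is routine.
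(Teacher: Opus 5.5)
Your proof is correct and follows essentially the same route as the paper: the same map \(x\mapsto\varphi({\downarrow}x)\), order embedding via principal ideals, preservation of directed suprema via \(\bigvee_i{\downarrow}x_i={\downarrow}\bigvee_i x_i\), and the identification \(S\cap C_a=j[\varphi^{-1}(a)]\). Your factorization \(\theta_S=\Gamma(j)\circ\varphi^{-1}\) makes explicit the final step, that \(S\) is an \(L\)-representation, which the paper leaves implicit.
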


\begin{proof}
Let \(\alpha:\Gamma(D)\to L\) be a lattice isomorphism, and define
\(
  j_D:D\longrightarrow\Irr(L),
  \  x\longmapsto\alpha(\down x).
\)
Principal ideals are irreducible, and
\(
 x\le y
 \Longleftrightarrow \down x\subseteq\down y
 \Longleftrightarrow j_D(x)\le j_D(y),
\)
so \(j_D\) is an order embedding. Moreover,
\(
  \bigvee_i\down x_i=\down\bigvee_i x_i
\)
for every directed family, hence the image is a sub-dcpo. Finally, for
\(x\in D\), \(j_D(x)\in C_a\) iff \(\alpha(\down x)\le a\), which is
equivalent to \(\down x\subseteq\alpha^{-1}(a)\). Thus
\(j_D[D]\cap C_a\) is precisely the image of \(\alpha^{-1}(a)\) under
\(j_D\), and \(j_D[D]\) is an \(L\)-representation.
\end{proof}

\begin{lemma}
\label{lem:common-generated-realization}
Let \(D,E\subseteq P=\Irr(L)\) be two \(L\)-representations, and let
\(
U=\langle D\cup E\rangle_{\mathrm{dcpo}}
\)
be the smallest sub-dcpo of \(P\) containing \(D\cup E\). Then \(U\) is also
an \(L\)-representation. Consequently, the inclusion maps

\[
  i_D:D\hookrightarrow U,
  \ 
  i_E:E\hookrightarrow U
\]
belong to \(E_{\two}\).
\end{lemma}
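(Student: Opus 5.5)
The plan is to verify directly that \(\theta_U:L\to\Gamma(U)\), \(a\mapsto U\cap C_a\), is an order isomorphism, and then deduce the statement about \(E_{\two}\) by a restriction argument. Throughout, \(U\), \(D\), \(E\) carry the order of \(P=\Irr(L)\), and directed suprema in these sub-dcpos are computed in \(P\), hence in \(L\) (by \cref{def:realization}). Two preliminary facts will be used.
\begin{enumerate}[label=\textup{(\alph*)}]
\item If \(B\in\Gamma(U)\), then \(B\cap D\in\Gamma(D)\), since \(D\) is a sub-dcpo of \(U\); likewise \(B\cap E\in\Gamma(E)\).
\item Each \(U\cap C_a\) is Scott closed in \(U\), as noted after \cref{def:realization}, so \(\theta_U\) is well defined and clearly monotone.
\end{enumerate}

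\emph{Order reflection.} Suppose \(U\cap C_a\subseteq U\cap C_b\). Intersecting with \(D\) gives \(\theta_D(a)\subseteq\theta_D(b)\). Since \(\theta_D\) is an order isomorphism, \(a\le b\).

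\emph{Surjectivity.} This is the main step. Fix \(B\in\Gamma(U)\). Put \(c=\theta_D^{-1}(B\cap D)\) and \(b=\theta_E^{-1}(B\cap E)\). I first show \(c=b\). Take \(d\in B\cap D\); \(d\) is in particular an element of \(L\). If \(e\in E\) satisfies \(e\le d\) in \(L\), then \(e\le d\) in \(U\). Since \(B\) is a lower set of \(U\), this gives \(e\in B\). Hence \(E\cap C_d\subseteq B\cap E=E\cap C_b\). Moreover \(E\cap C_d=\theta_E(d)\), so \(\theta_E(d)\subseteq\theta_E(b)\), and because \(\theta_E\) is an order isomorphism, \(d\le b\). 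Thus
\[
  \theta_D(c)=B\cap D\subseteq D\cap C_b=\theta_D(b),
\]
so \(c\le b\) since \(\theta_D\) is an order isomorphism. The symmetric argument gives \(b\le c\). Write \(a=b=c\). Then for \(u\in D\cup E\) we have \(u\in B\iff u\le a\).

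Next I extend this equivalence to all of \(U\) by transfinite induction. Build \(U_0=D\cup E\); let \(U_{\alpha+1}\) consist of the suprema in \(P\) of directed subsets of \(U_\alpha\); take unions at limit ordinals. This stabilises at \(U\). For the inductive step, let \((u_i)\) be directed in \(U_\alpha\) with supremum \(u\).
\begin{itemize}
\item If all \(u_i\in B\), then \(u\in B\) because \(B\) is Scott closed in \(U\); and all \(u_i\le a\) gives \(u\le a\).
\item If \(u\in B\), then every \(u_i\in B\), hence \(u_i\le a\), hence \(u\le a\).
\item If \(u\le a\), then every \(u_i\le a\), hence \(u_i\in B\), hence \(u\in B\).
\end{itemize}
Therefore \(B=U\cap C_a=\theta_U(a)\), and \(\theta_U\) is an order isomorphism. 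So \(U\) is an \(L\)-representation.

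\emph{The inclusions lie in \(E_{\two}\).} Let \(\rho:\Gamma(U)\to\Gamma(D)\), \(B\mapsto B\cap D\), which is well defined by (a). For every \(a\in L\),
\[
  \rho(\theta_U(a))=D\cap U\cap C_a=\theta_D(a),
\]
so \(\rho\circ\theta_U=\theta_D\). Since both \(\theta_U\) and \(\theta_D\) are isomorphisms, \(\rho=\theta_D\circ\theta_U^{-1}\) is an isomorphism. Passing to complements, \(i_D^{-1}:\sigma(U)\to\sigma(D)\) is an isomorphism, so \(i_D\in E_{\two}\). The same argument applies to \(i_E\).

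I expect the identification \(c=b\) to be the main obstacle. It rests on the fact that elements of \(D\) and of \(E\) are compared inside the single order of \(L\), and that \(B\) is a lower set for that inherited order.
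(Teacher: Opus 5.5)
Your proof is correct. Its skeleton matches the paper's: order reflection by restricting to \(D\); the identification of the two representing elements (your \(c=b\)) argued exactly as the paper does; and the final step via \(\rho\circ\theta_U=\theta_D\) and complements.

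The difference lies in how you pass from \(D\cup E\) to all of \(U\).

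\emph{The paper's route.} It first proves two general facts about a generated sub-dcpo \(U=\langle B\rangle_{\mathrm{dcpo}}\) and an arbitrary Scott-closed \(C\subseteq P\):
\begin{itemize}
\item \(U\cap C=\langle B\cap C\rangle_{\mathrm{dcpo}}\);
\item every \(u\in U\setminus C\) lies above some \(b\in B\setminus C\).
\end{itemize}
Each is proved by exhibiting a sub-dcpo of \(U\) containing \(B\) and invoking minimality. The first fact then gives \(U\cap C_a\subseteq F\), and the second gives \(F\subseteq U\cap C_a\).

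\emph{Your route.} You propagate the single invariant \(u\in B\iff u\le a\) along a transfinite construction of \(U\), and this yields both inclusions at once. Your three-case check is really the statement that \(\{u\in U: u\in B\iff u\le a\}\) is closed under directed suprema in \(U\). Since this set contains \(D\cup E\), minimality of \(U\) already forces it to equal \(U\), so the ordinal stages can be dropped entirely.

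\emph{Trade-off.} The paper's route produces reusable lemmas about generated sub-dcpos, valid for every Scott-closed \(C\). Yours is shorter and more self-contained for this particular statement.
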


\begin{proof}
We first record a fact about generated sub-dcpos. Let \(P\) be a dcpo,
let \(B\subseteq P\), and let
\(
U=\langle B\rangle_{\mathrm{dcpo}}.
\)
If \(C\subseteq P\) is Scott closed, then
\[
U\cap C=\langle B\cap C\rangle_{\mathrm{dcpo}}.
\]

Indeed, put
\(
H=\langle B\cap C\rangle_{\mathrm{dcpo}}.
\)
Clearly \(H\subseteq U\cap C\). Define
\(
K=H\cup(U\setminus C).
\)
We claim that \(K\) is a sub-dcpo of \(U\). Let \(A\subseteq K\) be
directed. If \(A\subseteq H\), then \(\bigvee A\in H\). Otherwise, \(A\)
contains some \(a\notin C\). Since \(a\leq\bigvee A\) and \(C\) is a
lower set, \(\bigvee A\notin C\). Hence \(\bigvee A\in U\setminus C\).
Thus \(K\) is a sub-dcpo of \(U\).

Moreover, \(K\) contains \(B\). By the minimality of \(U\), we have
\(U\subseteq K\). It follows that
\(
U\cap C\subseteq H,
\)
and hence
\(
U\cap C=\langle B\cap C\rangle_{\mathrm{dcpo}}.
\)

We shall also use the following related fact: if \(u\in U\setminus C\),
then there exists \(b\in B\setminus C\) such that \(b\leq u\). Define
\(
K'=
\left\{
u\in U:
u\in C
\text{ or there exists }b\in B\setminus C\text{ such that }b\leq u
\right\}.
\)
The set \(K'\) contains \(B\). It is also a sub-dcpo of \(U\). Indeed,
let \(A\subseteq K'\) be directed. If \(\bigvee A\in C\), then
\(\bigvee A\in K'\). Otherwise, since \(C\) is Scott closed, some
\(a\in A\) lies outside \(C\). By the definition of \(K'\), there exists
\(b\in B\setminus C\) such that
\(
b\leq a\leq\bigvee A.
\)
Thus \(\bigvee A\in K'\). By the minimality of \(U\), we obtain \(K'=U\).

Now take
\(
B=D\cup E
\)
and define
\(
\theta_U(a)=U\cap C_a,
\ 
  C_a=\{p\in\Irr(L):p\le a\}.
\)
The map
\(
\theta_U:L\longrightarrow\Gamma(U)
\)
is injective, since its restriction to \(D\) is the representation map
\(
\theta_D(a)=D\cap C_a,
\)
which is injective.

To prove surjectivity, let \(F\in\Gamma(U)\). Since \(D\) and \(E\) are
\(L\)-representations, there exist unique \(a,b\in L\) such that
\(
F\cap D=D\cap C_a, F\cap E=E\cap C_b.
\)

We first prove that \(a=b\). Let \(d\in D\cap C_a\). Then \(d\in F\).
If \(e\in E\) and \(e\leq d\), the lowerness of \(F\) gives
\(
e\in F\cap E=E\cap C_b.
\)
Therefore
\(
E\cap C_d\subseteq E\cap C_b.
\)
Since \(E\) is an \(L\)-representation, it follows that \(d\leq b\). Hence
\(
D\cap C_a\subseteq D\cap C_b.
\)
Since \(D\) is an \(L\)-representation, we obtain \(a\leq b\). By symmetry,
\(b\leq a\), and therefore \(a=b\).

Consequently,
\(
F\cap B=B\cap C_a.
\)
By the first generating fact,
\(
U\cap C_a
=
\langle B\cap C_a\rangle_{\mathrm{dcpo}}
\subseteq F.
\)

Conversely, suppose that \(u\in F\setminus C_a\). By the second generating
fact, there exists \(b_0\in B\setminus C_a\) such that
\(
b_0\leq u.
\)
Since \(F\) is a lower set, \(b_0\in F\). Thus
\(
b_0\in F\cap B=B\cap C_a,
\)
contradicting \(b_0\notin C_a\). Therefore
\(
F\subseteq U\cap C_a.
\)
It follows that
\(
F=U\cap C_a.
\)
Hence \(\theta_U\) is surjective. It is also order-reflecting: if
\(\theta_U(a)\subseteq\theta_U(b)\), intersecting with \(D\) gives
\(\theta_D(a)\subseteq\theta_D(b)\), and the order-isomorphism \(\theta_D\)
implies \(a\le b\). Hence \(\theta_U\) is an order isomorphism.
Thus \(U\) is an \(L\)-representation.

Finally, let \(i_D:D\hookrightarrow U\) be the inclusion map. The following
diagram commutes:
\[
\begin{tikzcd}[column sep=large,row sep=large]
L
  \arrow[r,"\theta_U","\cong"']
  \arrow[d,"\mathrm{id}_L"']
&
\Gamma(U)
  \arrow[d,"i_D^{-1}", "\cong"']
\\
L
  \arrow[r,"\theta_D"']
&
\Gamma(D).
\end{tikzcd}
\]
Indeed, for every \(a\in L\),
\(
i_D^{-1}\bigl(\theta_U(a)\bigr)
=(U\cap C_a)\cap D
=D\cap C_a
=\theta_D(a).
\)
Thus \(i_D^{-1}:\Gamma(U)\to\Gamma(D)\) is an isomorphism. Taking
complements shows that the inverse-image map
\(
i_D^{-1}:\sigma(U)\longrightarrow\sigma(D)
\)
is also an isomorphism. Hence \(i_D\in E_{\two}\).

The same argument, using the analogous commutative diagram
\[
\begin{tikzcd}[column sep=large,row sep=large]
L
  \arrow[r,"\theta_U","\cong"']
  \arrow[d,"\mathrm{id}_L"']
&
\Gamma(U)
  \arrow[d,"i_E^{-1}", "\cong"']
\\
L
  \arrow[r,"\theta_E"']
&
\Gamma(E),
\end{tikzcd}
\]
shows that \(i_E:E\hookrightarrow U\) also belongs to \(E_{\two}\).
\end{proof}

\begin{figure}[ht]
\centering
\begin{tikzcd}[column sep=huge,row sep=large]
D \arrow[r,hook,"i_D"] \arrow[dr,hook,"j_D"']
  & U=\langle D\cup E\rangle_{\mathrm{dcpo}}
      \arrow[d,hook,"j_U"]
  & E \arrow[l,hook',"i_E"'] \arrow[dl,hook',"j_E"] \\
  & P=\Irr(L) &
\end{tikzcd}
\caption{The common generated representation inside \(P=\Irr(L)\).}
\label{fig:common-realization}
\end{figure}

\subsection{Faithfulness and maximality}

\begin{theorem}\label{thm:gamma-faithful}
The category \(\Rtwo\) is \(\Gamma\)-faithful on objects: if \(D,E\in\Rtwo\) and \(\Gamma(D)\cong\Gamma(E)\), then \(D\cong E\).
\end{theorem}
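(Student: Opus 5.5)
The plan is to put both dcpos inside one common dcpo and then use the rigidity of objects of \(\Rtwo\) to collapse the resulting span of inclusions.

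Let \(D,E\in\Rtwo\) and fix a lattice isomorphism \(\Gamma(D)\cong\Gamma(E)\). Put \(L=\Gamma(D)\), which is a coframe, and \(P=\Irr(L)\). By \cref{lem:realization}, applied once with \(\alpha=\id_L\) and once with the given isomorphism \(\Gamma(E)\cong L\), both \(D\) and \(E\) embed isomorphically into \(P\) as \(L\)-representations. Since \(\Rtwo\) is closed under isomorphisms, we may replace \(D\) and \(E\) by their images \(j_D[D]\) and \(j_E[E]\). These are sub-dcpos of \(P\), they still lie in \(\Rtwo\), and their \(L\)-representation maps \(\theta_D,\theta_E\) are order isomorphisms.

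Next we form \(U=\langle D\cup E\rangle_{\mathrm{dcpo}}\subseteq P\). By \cref{lem:common-generated-realization}, \(U\) is an \(L\)-representation, and the inclusions \(i_D:D\hookrightarrow U\) and \(i_E:E\hookrightarrow U\) both belong to \(E_{\two}\). This yields the span \(D\hookrightarrow U\hookleftarrow E\) of \cref{fig:common-realization}.

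Now we apply \cref{thm:rigidity}. Since \(D\in\Rtwo\) and \(i_D\in E_{\two}\), the map \(i_D\) is an isomorphism in \(\DCPO\). As \(i_D\) is an inclusion, this forces \(D=U\). The same argument gives \(E=U\). Hence \(D=E\) as subsets of \(P\) with the inherited order, and undoing the identifications gives \(D\cong E\). Explicitly, \(j_E^{-1}\circ j_D\) is the required isomorphism.

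All the heavy lifting has already been done in \cref{lem:common-generated-realization}: the delicate step was showing that the generated sub-dcpo \(U\) is again a representation, which needed the two facts about generated sub-dcpos and Scott-closed sets. What remains here is bookkeeping. The one point to check carefully is that \cref{lem:realization} is applied with the same \(L\) for both dcpos, so that \(D\) and \(E\) really sit inside the same \(\Irr(L)\) with compatible maps \(\theta\). This holds because we transport \(\Gamma(E)\) to \(L\) along the given isomorphism before embedding.
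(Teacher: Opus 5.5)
Your proposal is correct and follows the paper's proof essentially step for step. You realize \(D\) and \(E\) as \(L\)-representations inside a common \(\Irr(L)\), form \(U=\langle D\cup E\rangle_{\mathrm{dcpo}}\), and combine \cref{lem:common-generated-realization} with \cref{thm:rigidity} to force \(D=U=E\); the paper additionally writes down the retractions \(r_D,r_E\), which is redundant once rigidity is invoked.
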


\begin{proof}
Let \(L\) be a common copy of the two closed-set lattices. By \cref{lem:realization}, we may regard \(D\) and \(E\) as two \(L\)-representations inside \(\Irr(L)\). Let
\(
  U=\langle D\cup E\rangle_{\mathrm{dcpo}}.
\)
By the preceding lemma, both inclusions
\(
  i_D:D\hookrightarrow U,
  i_E:E\hookrightarrow U
\)
belong to \(E_{\two}\). Since \(D,E\in E_{\two}^{\perp}\), there are maps
\(
  r_D:U\to D,
  r_E:U\to E
\)
with \(r_D\circ i_D=\id_D\) and \(r_E\circ i_E=\id_E\). By \cref{thm:rigidity}, both inclusions are isomorphisms. Hence \(D\cong U\cong E\).
\end{proof}

\cref{thm:gamma-faithful} proves $\Gamma$-faithfulness of \(\Rtwo\). \cref{lem:Gamma-unit} also
shows that no proper full supercategory remains \(\Gamma\)-faithful: if a dcpo \(D\notin\Rtwo\) is added,
then $D$ and $\mathbf{L_2}D$ have isomorphic Scott-closed-set lattices but are
not isomorphic.

\begin{theorem}\label{thm:maximal}
Let \(\mathcal A\) be a full subcategory of \(\DCPO\). If
\(
  \Rtwo\subsetneq\mathcal A,
\)
then \(\mathcal A\) is not \(\Gamma\)-faithful. Consequently, \(\Rtwo\) is inclusion-maximal among \(\Gamma\)-faithful full subcategories of \(\DCPO\).
\end{theorem}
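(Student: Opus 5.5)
The plan is to exhibit, inside \(\mathcal A\), two non-isomorphic dcpos with isomorphic Scott-closed-set lattices. Since \(\Rtwo\subsetneq\mathcal A\), I choose a dcpo \(D\in\mathcal A\setminus\Rtwo\) and consider its \(2\)-reflection \(\mathbf{L_2}D\) with unit \(\lambda_D:D\to\mathbf{L_2}D\). By construction \(\mathbf{L_2}D\in\Rtwo\subseteq\mathcal A\), so both \(D\) and \(\mathbf{L_2}D\) are objects of \(\mathcal A\).

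Next I apply \cref{lem:Gamma-unit}, which gives \(\Gamma(\mathbf{L_2}D)\cong\Gamma(D)\) through inverse image along \(\lambda_D\). It then remains to see that \(D\not\cong\mathbf{L_2}D\). This follows from the convention that every full subcategory is closed under isomorphisms. If \(D\cong\mathbf{L_2}D\) held, then \(D\) would be isomorphic to an object of \(\Rtwo\), so \(D\in\Rtwo\), contradicting the choice of \(D\). Note that the argument needs only some isomorphism, not that \(\lambda_D\) itself is one, so \cref{thm:rigidity} is not required here. Hence \(D\) and \(\mathbf{L_2}D\) form a pair in \(\mathcal A\) with isomorphic \(\Gamma\)-lattices that are not isomorphic, and \(\mathcal A\) is not \(\Gamma\)-faithful.

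For the maximality statement, I combine this with \cref{thm:gamma-faithful}, which gives that \(\Rtwo\) is itself \(\Gamma\)-faithful. Any full subcategory strictly containing \(\Rtwo\) fails to be \(\Gamma\)-faithful by the preceding step, so \(\Rtwo\) is inclusion-maximal among \(\Gamma\)-faithful full subcategories.

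There is no real obstacle here. The only point needing care is the use of isomorphism-closure of \(\Rtwo\) in the non-isomorphism step, and that is part of the paper's standing convention.
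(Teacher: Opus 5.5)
Your proposal is correct and matches the paper's proof essentially step for step. You pick \(D\in\mathcal A\setminus\Rtwo\), use \cref{lem:Gamma-unit} to get \(\Gamma(D)\cong\Gamma(\mathbf{L_2}D)\) with \(\mathbf{L_2}D\in\Rtwo\subseteq\mathcal A\), rule out \(D\cong\mathbf{L_2}D\) by isomorphism-closure of \(\Rtwo\), and explicitly cite \cref{thm:gamma-faithful} for the maximality clause, which the paper leaves implicit.
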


\begin{proof}
Choose \(D\in\mathcal A\setminus\Rtwo\). Since \(\mathbf{L_2}D\in\Rtwo\subseteq\mathcal A\), \cref{lem:Gamma-unit} gives
\(
  \Gamma(D)\cong\Gamma(\mathbf{L_2}D).
\)
If \(D\cong \mathbf{L_2}D\), then \(D\) would be isomorphic to an object of \(\Rtwo\), hence would itself belong to \(\Rtwo\), a contradiction. Thus \(\mathcal A\) contains two non-isomorphic objects with isomorphic Scott-closed-set lattices.
\end{proof}

\section{Weakly dominated dcpos and the Xu--Zhao example}\label{sec:weakdom}

We compare \(\Rtwo\) with the dominated dcpos of Ho--Goubault-Larrecq--Jung--Xi \cite{ho2018} and the weakly dominated dcpos of Miao--Hou--Jia--Li \cite{miao2024}. We prove \(\WDomi\subseteq\Rtwo\) and use the stratified Johnstone dcpo \(\LJ\) of Xu and Zhao \cite[Example~4.5]{XuZhao2019} to show that the inclusion is strict.

\subsection{\texorpdfstring{Domination and \(C\)-compactness}{Domination and C-compactness}}

For a dcpo \(P\), \(\Irr(\Gamma(P))\) denotes the dcpo of nonempty irreducible Scott-closed subsets ordered by inclusion. For every directed family \(\mathcal D\subseteq\Irr(\Gamma(P))\), its supremum is \(\operatorname{cl}_{\sigma(P)}(\bigcup\mathcal D)\) \cite[Remark~2.1(1)]{XuZhao2019}.

\begin{definition}[{\cite[Definition~2.5]{ho2018}}]\label{def:lhd-irr}
For \(A,B\in\Irr(\Gamma(P))\), write \(A\lhd B\) if \(A\subseteq{\downarrow} b\) for some \(b\in B\), and put \(\nabla_{\mathrm{irr}}B=\{A\in\Irr(\Gamma(P)):A\lhd B\}\).
\end{definition}

\begin{definition}[{\cite[Definition~2.10]{ho2018}}]\label{def:dominated}
A dcpo \(P\) is \emph{dominated} if \(\nabla_{\mathrm{irr}}B\) is Scott closed in \(\Irr(\Gamma(P))\) for every \(B\in\Irr(\Gamma(P))\). The class of dominated dcpos is denoted by \(\DOMI\).
\end{definition}

\begin{definition}[{\cite[Definitions~3.1 and~4.2]{HoZhao2009}; \cite[Definition~5.1]{miao2024}}]\label{def:Ccompact}
Let \(Q\) be a poset. For \(x,y\in Q\), write \(x\prec^*y\) if, for every nonempty Scott-closed subset \(\mathcal A\subseteq Q\) whose supremum exists, \(y\leq\bigvee\mathcal A\) implies \(x\in\mathcal A\). An element \(x\) is \emph{\(C\)-compact} if \(x\prec^*x\). For a dcpo \(P\), let \(C(\Gamma(P))\) be the set of \(C\)-compact elements of \(\Gamma(P)\).
\end{definition}

\begin{proposition}[{\cite[Proposition~4.3(iii) and Corollary~4.6]{HoZhao2009}; \cite[Proposition~5.2 and Corollary~5.3]{miao2024}}]\label{prop:Ccompact-basic}
For every dcpo \(P\), the set \(C(\Gamma(P))\) is a sub-dcpo of \(\Gamma(P)\), and \({\downarrow} p\in C(\Gamma(P))\) for every \(p\in P\).
\end{proposition}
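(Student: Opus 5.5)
The plan is to verify the two assertions directly from \cref{def:Ccompact}, using two facts. First, \(\Gamma(P)\) is a complete lattice, so every supremum exists: for a family \(\mathcal A\subseteq\Gamma(P)\) we have \(\bigvee\mathcal A=\cl_{\sigma(P)}(\bigcup\mathcal A)\). Second, the supremum of a directed family in \(\Gamma(P)\) is the Scott closure of its union. So \(X\in\Gamma(P)\) is \(C\)-compact exactly when, for every nonempty Scott-closed \(\mathcal A\subseteq\Gamma(P)\), \(X\subseteq\cl(\bigcup\mathcal A)\) implies \(X\in\mathcal A\).

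\emph{Sub-dcpo.} Let \((A_i)_{i\in I}\) be a directed family in \(C(\Gamma(P))\), and put \(A=\bigvee_iA_i=\cl(\bigcup_iA_i)\), the supremum computed in \(\Gamma(P)\). I will show that \(A\) is \(C\)-compact; then suprema in \(C(\Gamma(P))\) agree with those in \(\Gamma(P)\), which is the sub-dcpo claim.
\begin{enumerate}[label=\textup{(\arabic*)}]
\item Take a nonempty Scott-closed \(\mathcal A\subseteq\Gamma(P)\) with \(A\subseteq\bigvee\mathcal A\).
\item For each \(i\) we have \(A_i\subseteq A\subseteq\bigvee\mathcal A\), so \(C\)-compactness of \(A_i\) gives \(A_i\in\mathcal A\).
\item Since \(\mathcal A\) is Scott closed in \(\Gamma(P)\), it contains the directed supremum \(A\).
\end{enumerate}
Thus \(A\in C(\Gamma(P))\).

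\emph{Principal ideals.} Fix \(p\in P\) and a nonempty Scott-closed \(\mathcal A\subseteq\Gamma(P)\) with \({\downarrow}p\subseteq\cl(\bigcup\mathcal A)\). The key step is to consider
\[
  S=\{x\in P:{\downarrow}x\in\mathcal A\}
\]
and prove that it is Scott closed in \(P\) and contains \(\bigcup\mathcal A\). Once this is done, \(\cl(\bigcup\mathcal A)\subseteq S\), hence \(p\in S\), that is, \({\downarrow}p\in\mathcal A\).

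The three properties of \(S\) are checked as follows.
\begin{enumerate}[label=\textup{(\arabic*)}]
\item \(S\) is a lower set. If \(y\le x\in S\), then \({\downarrow}y\subseteq{\downarrow}x\in\mathcal A\), and \(\mathcal A\) is a lower set.
\item \(S\) is closed under directed suprema. If \((x_j)\) is directed in \(S\), then \({\downarrow}\bigvee_jx_j\) is the directed supremum of the \({\downarrow}x_j\) in \(\Gamma(P)\). This is the same computation as in \cref{lem:eta-dcpo-embedding}: the Scott closure of \(\bigcup_j{\downarrow}x_j\) is \({\downarrow}\bigvee_jx_j\). Scott-closedness of \(\mathcal A\) then gives \({\downarrow}\bigvee_jx_j\in\mathcal A\).
\item \(S\) contains \(\bigcup\mathcal A\). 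If \(x\in B\in\mathcal A\), then \({\downarrow}x\subseteq B\) because \(B\) is a lower set, so \({\downarrow}x\in\mathcal A\) since \(\mathcal A\) is lower.
\end{enumerate}

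The only step needing care is recognising that Scott-closedness of \(\mathcal A\) in \(\Gamma(P)\) must be transported down to Scott-closedness of \(S\) in \(P\), and that this uses that \(x\mapsto{\downarrow}x\) preserves directed suprema. The hypothesis that \(\mathcal A\) is nonempty plays no role in either argument.
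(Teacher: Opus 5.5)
Your proof is correct. The paper does not prove this proposition; it imports it from \cite[Proposition~4.3(iii) and Corollary~4.6]{HoZhao2009} and \cite{miao2024}. Your argument is therefore a self-contained verification rather than an alternative to an in-paper proof.

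The sub-dcpo half follows the same pattern the paper itself uses later in \cref{prop:Ah-Ccompact,prop:Ainfty-Ccompact}: apply \(C\)-compactness of each \(A_i\) to the same \(\mathcal A\), then use that \(\mathcal A\) is Scott closed in \(\Gamma(P)\).

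For the principal ideals, your auxiliary set is in fact \(S=\bigcup\mathcal A\). You showed \(\bigcup\mathcal A\subseteq S\), and conversely \(x\in S\) gives \(x\in{\downarrow}x\in\mathcal A\). So your three checks on \(S\) amount to a proof of \cref{lem:union-closed}, i.e.\ Ho--Zhao's Proposition~4.1: \(\bigvee_{\Gamma(P)}\mathcal A=\bigcup\mathcal A\) for \(\mathcal A\in\Gamma(\Gamma(P))\). The paper records this fact separately, from the same source.

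With that lemma in hand the claim is a one-liner. From \({\downarrow}p\subseteq\bigcup\mathcal A\) you get \(p\in B\) for some \(B\in\mathcal A\). Then \({\downarrow}p\subseteq B\), and hence \({\downarrow}p\in\mathcal A\) because \(\mathcal A\) is a lower set.

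Your inline version gains independence from the citation at the cost of a few lines. Your closing remark is also right: both halves go through verbatim whether or not the empty \(\mathcal A\) is allowed in \cref{def:Ccompact}.
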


\begin{lemma}\label{lem:Ccompact-irreducible}
For every dcpo \(P\), \(C(\Gamma(P))\setminus\{\varnothing\}\subseteq\Irr(\Gamma(P))\). The empty set is also \(C\)-compact.
\end{lemma}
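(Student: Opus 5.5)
We have to show two things: every nonempty \(C\)-compact element \(x\) of \(\Gamma(P)\) is irreducible, and \(\varnothing\) is \(C\)-compact. We will work directly from \cref{def:Ccompact}, applied to the complete lattice \(Q=\Gamma(P)\), where every subset has a supremum. Note that in \cref{def:Ccompact} the subsets \(\mathcal A\) range over nonempty Scott-closed subsets of the poset \(\Gamma(P)\). These are not subsets of \(P\).

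For the empty set, take any nonempty Scott-closed \(\mathcal A\subseteq\Gamma(P)\). It is a nonempty lower set of \(\Gamma(P)\), and \(\varnothing\) is the least element of \(\Gamma(P)\), so \(\varnothing\in\mathcal A\). The implication \(\varnothing\le\bigvee\mathcal A\Rightarrow\varnothing\in\mathcal A\) therefore holds trivially, and \(\varnothing\prec^*\varnothing\).

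For irreducibility, let \(x\in C(\Gamma(P))\) with \(x\neq\varnothing\), and suppose \(x\subseteq a\cup b\) for some \(a,b\in\Gamma(P)\). The binary join in \(\Gamma(P)\) is the union, since a finite union of Scott-closed sets is Scott closed. We take
\[
  \mathcal A={\downarrow}_{\Gamma(P)}a\cup{\downarrow}_{\Gamma(P)}b
  =\{c\in\Gamma(P):c\subseteq a\text{ or }c\subseteq b\}.
\]
This set is nonempty and is a lower set in \(\Gamma(P)\). It is also Scott closed in \(\Gamma(P)\): each principal ideal \({\downarrow}a\) is Scott closed in any poset, because a directed supremum of elements below \(a\) stays below \(a\), and a finite union of Scott-closed sets is Scott closed. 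Its supremum is \(a\vee b=a\cup b\), which contains \(x\). Since \(x\) is \(C\)-compact, \(x\in\mathcal A\), that is, \(x\subseteq a\) or \(x\subseteq b\). Hence \(x\) is irreducible in the coframe \(\Gamma(P)\), so \(x\in\Irr(\Gamma(P))\).

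There is no real obstacle in this argument. The only point to get right is reading \cref{def:Ccompact} at the level of \(\Gamma(P)\): one must choose the test family \(\mathcal A\) as a Scott-closed subset of \(\Gamma(P)\) and check that it is Scott closed there and that its supremum is \(a\cup b\).
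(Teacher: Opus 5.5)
Your proof is correct and follows the paper's argument essentially verbatim: \(\varnothing\) lies in every nonempty Scott-closed subset of \(\Gamma(P)\), and irreducibility comes from testing \(C\)-compactness against \({\downarrow}a\cup{\downarrow}b\), whose supremum is \(a\cup b\). Your extra checks (that principal ideals and finite unions of Scott-closed sets are Scott closed, and that the supremum is \(a\cup b\)) only spell out what the paper leaves implicit.
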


\begin{proof}
Every nonempty Scott-closed subset of \(\Gamma(P)\) contains its least element \(\varnothing\), so \(\varnothing\prec^*\varnothing\). Let \(B\in C(\Gamma(P))\) be nonempty and suppose \(B\subseteq F_1\cup F_2\), where \(F_1,F_2\in\Gamma(P)\). The set \({\downarrow} F_1\cup{\downarrow} F_2\) is a nonempty Scott-closed subset of \(\Gamma(P)\) with supremum \(F_1\cup F_2\). Since \(B\) is \(C\)-compact, \(B\subseteq F_1\) or \(B\subseteq F_2\). Hence \(B\) is irreducible.
\end{proof}

\begin{definition}[\cite{miao2024}]
\label{def:Ccompact-prelim}
The following notions will be used throughout.
\begin{enumerate}[label=(\roman*)]

\item \label{def:Ccompact-subset}
{\cite[Definition~5.5]{miao2024}} 
A subset \(A\) of a poset \(Q\) is a \emph{\(C\)-compact subset} if
\(\operatorname{cl}_{\sigma(Q)}(A)\) is a \(C\)-compact element of \(\Gamma(Q)\).

\item \label{def:Ccompact-complete}
{\cite[Definition~5.11]{miao2024}}
A dcpo \(Q\) is \emph{\(C\)-compactly complete} if every \(C\)-compact subset of \(Q\) has a supremum.

\item \label{def:prec}
{\cite[Definition~5.14]{miao2024}}
Let \(Q\) be a \(C\)-compactly complete dcpo. For \(x,y\in Q\), write
\(x\prec y\) if, for every Scott-closed \(C\)-compact subset \(A\subseteq Q\),
\(y\leq\bigvee A\) implies \(x\in A\). An element \(x\) is
\emph{\(\prec\)-compact} if \(x\prec x\).

\item \label{def:lhd-C}
{\cite[Definition~5.9]{miao2024}}
For \(A,B\in C(\Gamma(P))\), write \(A\lhd B\) if
\(A\subseteq{\downarrow} b\) for some \(b\in B\), and put
\(\nabla_C B=\{A\in C(\Gamma(P)):A\lhd B\}\).

\item \label{def:weakly-dominated}
{\cite[Definition~5.13]{miao2024}}
A dcpo \(P\) is \emph{weakly dominated} if \(\nabla_C B\) is Scott closed in
\(C(\Gamma(P))\) for every \(B\in C(\Gamma(P))\). The class of weakly dominated dcpos is denoted by \(\WDomi\).

\end{enumerate}
\end{definition}

\begin{proposition}[{\cite[Proposition~5.12]{miao2024}}]\label{prop:Cgamma-Ccomplete}
For every dcpo \(P\), the dcpo \(C(\Gamma(P))\) is \(C\)-compactly complete.
\end{proposition}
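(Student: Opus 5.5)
The plan is to take as candidate supremum the supremum computed in the ambient complete lattice \(\Gamma(P)\), and then show that this candidate is itself \(C\)-compact. Put \(Q=C(\Gamma(P))\). By \cref{prop:Ccompact-basic}, \(Q\) is a sub-dcpo of \(\Gamma(P)\), so directed suprema in \(Q\) agree with those in \(\Gamma(P)\). Consequently, for every Scott-closed \(\mathcal F\subseteq\Gamma(P)\), the trace \(\mathcal F\cap Q\) is Scott closed in \(Q\).

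Let \(\mathcal A\subseteq Q\) be a \(C\)-compact subset, and put \(\overline{\mathcal A}=\operatorname{cl}_{\sigma(Q)}(\mathcal A)\), which is a \(C\)-compact element of \(\Gamma(Q)\). If \(\mathcal A=\varnothing\), its supremum is \(\varnothing\), and \(\varnothing\in Q\) by \cref{lem:Ccompact-irreducible}. Otherwise set
\[
  B=\operatorname{cl}_{\sigma(P)}\Bigl(\bigcup\mathcal A\Bigr),
\]
which is the supremum of \(\mathcal A\) in \(\Gamma(P)\). Since \({\downarrow}B\cap Q\) is Scott closed in \(Q\) and contains \(\mathcal A\), it contains \(\overline{\mathcal A}\). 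Hence \(B\) is also the supremum of \(\overline{\mathcal A}\) in \(\Gamma(P)\). It therefore suffices to prove \(B\in Q\), that is, \(B\prec^*B\): then \(B\) is an upper bound of \(\mathcal A\) lying in \(Q\) and below every upper bound in \(\Gamma(P)\), so it is the supremum of \(\mathcal A\) in \(Q\).

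To verify \(B\prec^*B\), let \(\mathcal F\subseteq\Gamma(P)\) be nonempty and Scott closed with \(B\subseteq\bigvee\mathcal F\); we must show \(B\in\mathcal F\). The key step, and the main obstacle, is that \(B\) is not a directed supremum of members of \(\mathcal F\), so Scott-closedness of \(\mathcal F\) cannot be applied directly. The idea is to transfer the problem one level up and use the \(C\)-compactness of \(\overline{\mathcal A}\) in \(\Gamma(Q)\). Define
\[
  \mathfrak A=\Bigl\{\mathcal C\in\Gamma(Q):\operatorname{cl}_{\sigma(P)}\Bigl(\bigcup\mathcal C\Bigr)\in\mathcal F\Bigr\}.
\]

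We then check the following, in order.
\begin{enumerate}[label=(\alph*)]
\item \(\mathfrak A\) is a lower set, because \(\mathcal F\) is lower.
\item \(\mathfrak A\) is closed under directed suprema in \(\Gamma(Q)\). If \((\mathcal C_i)_i\) is directed, then \(\operatorname{cl}_{\sigma(P)}\bigl(\bigcup\operatorname{cl}_{\sigma(Q)}\bigcup_i\mathcal C_i\bigr)\) equals the directed supremum in \(\Gamma(P)\) of the sets \(\operatorname{cl}_{\sigma(P)}\bigl(\bigcup\mathcal C_i\bigr)\). This uses the same \({\downarrow}\)-trick as above: \(Q\cap{\downarrow}X\) is Scott closed in \(Q\) for every \(X\in\Gamma(P)\). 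Scott-closedness of \(\mathcal F\) then gives the claim.
\item \(\mathfrak A\) is nonempty and \(\overline{\mathcal A}\subseteq\bigvee\mathfrak A\). Each \(A\in\overline{\mathcal A}\) is \(C\)-compact and satisfies \(A\subseteq B\subseteq\bigvee\mathcal F\), hence \(A\in\mathcal F\). Since \(\operatorname{cl}_{\sigma(P)}\bigl(\bigcup({\downarrow}A\cap Q)\bigr)=A\), we get \({\downarrow}A\cap Q\in\mathfrak A\), and \(A\) lies in this member of \(\mathfrak A\).
\end{enumerate}
By \(\overline{\mathcal A}\prec^*\overline{\mathcal A}\) in \(\Gamma(Q)\), these facts give \(\overline{\mathcal A}\in\mathfrak A\), that is, \(B=\operatorname{cl}_{\sigma(P)}\bigl(\bigcup\overline{\mathcal A}\bigr)\in\mathcal F\). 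This proves \(B\prec^*B\).

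The only delicate routine point is the closure computation in step (b). There one must ensure that passing through \(\operatorname{cl}_{\sigma(Q)}\) does not enlarge the \(\sigma(P)\)-closure of the union. This again follows from the fact that \({\downarrow}X\cap Q\) is Scott closed in \(Q\) for \(X\in\Gamma(P)\).
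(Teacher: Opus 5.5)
Your proof is correct. The paper gives no argument for this proposition; it imports it from \cite[Proposition~5.12]{miao2024}, so there is no internal proof to compare against. Your proposal gives a self-contained proof that relies only on \cref{prop:Ccompact-basic} and \cref{lem:Ccompact-irreducible}.

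The mechanism is cleaner than your step-by-step presentation suggests. The map $s:\Gamma(Q)\to\Gamma(P)$, $s(\mathcal C)=\operatorname{cl}_{\sigma(P)}\bigl(\bigcup\mathcal C\bigr)$, is left adjoint to $X\mapsto{\downarrow}X\cap Q$; this is exactly your ${\downarrow}$-trick. Hence $s$ preserves all joins, not only directed ones. This gives at once that $\mathfrak A=s^{-1}(\mathcal F)$ is Scott closed, and that $B=s(\mathcal A)=s(\overline{\mathcal A})$.

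The transfer of $C$-compactness from $\overline{\mathcal A}\in\Gamma(Q)$ to $B\in\Gamma(P)$ then rests on two facts:
\begin{enumerate}[label=(\roman*)]
\item $s({\downarrow}A\cap Q)=A$ for every $A\in Q$;
\item each $A\in\overline{\mathcal A}$ satisfies $A\prec^*A$ in $\Gamma(P)$.
\end{enumerate}
Together they give $\overline{\mathcal A}\subseteq\bigcup\mathfrak A$, and therefore $B\in\mathcal F$.

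A useful by-product, worth stating explicitly, is that suprema of $C$-compact subsets of $C(\Gamma(P))$ are computed in $\Gamma(P)$:
\[
\bigvee\mathcal A=\operatorname{cl}_{\sigma(P)}\Bigl(\bigcup\mathcal A\Bigr).
\]
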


\begin{proposition}[{\cite[Propositions~5.15 and~5.17]{miao2024}}]\label{prop:weakdom-characterization}
For \(A,B\in C(\Gamma(P))\), \(A\lhd B\) implies \(A\prec B\). If \(P\) is weakly dominated, then the \(\prec\)-compact elements of \(C(\Gamma(P))\) are exactly the principal ideals \({\downarrow} p\), \(p\in P\).
\end{proposition}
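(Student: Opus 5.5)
The statement has two parts, and I would prove them in order. The second part will use the first.

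\emph{Part 1: \(A\lhd B\) implies \(A\prec B\).} Fix \(b\in B\) with \(A\subseteq{\downarrow} b\). Let \(\mathcal A\subseteq C(\Gamma(P))\) be a Scott-closed \(C\)-compact subset with \(B\le\bigvee\mathcal A\); this supremum exists by \cref{prop:Cgamma-Ccomplete}. Since \(\mathcal A\) is a lower set of \(C(\Gamma(P))\) and \(A\in C(\Gamma(P))\) with \(A\subseteq{\downarrow} b\), it suffices to prove \({\downarrow} b\in\mathcal A\). By \cref{prop:Ccompact-basic}, \({\downarrow} b\) does lie in \(C(\Gamma(P))\).

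First I would identify \(\bigvee\mathcal A\). I expect it to be the Scott closure in \(P\) of \(\bigcup\mathcal A\). At least that closure is an upper bound in \(\Gamma(P)\) lying below the supremum taken in \(C(\Gamma(P))\), and this is enough to get \({\downarrow} b\subseteq B\subseteq\bigvee\mathcal A\). Next I would use that \({\downarrow} b\) is \(C\)-compact in \(\Gamma(P)\). Apply this to the Scott closure \(\mathcal A'\) of \(\mathcal A\) in \(\Gamma(P)\), which is a nonempty Scott-closed subset of \(\Gamma(P)\) whose supremum dominates \({\downarrow} b\). This gives \({\downarrow} b\in\mathcal A'\).

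The remaining step is to pull this back to \({\downarrow} b\in\mathcal A\). Here I would use the hypothesis that \(\mathcal A\) is itself a \(C\)-compact subset. Its closure is a \(C\)-compact element of \(\Gamma(C(\Gamma(P)))\), hence irreducible by \cref{lem:Ccompact-irreducible}. So I would argue that \(\mathcal A'\cap C(\Gamma(P))=\mathcal A\). The route I would try first is to show that \(\mathcal A\) is the trace of a Scott-closed set of \(\Gamma(P)\): suprema in the sub-dcpo \(C(\Gamma(P))\) agree with those in \(\Gamma(P)\), which makes the trace of a closed set closed. For the converse direction I would approximate \({\downarrow} b\) using the \(\prec^*\)-relation. \textbf{This comparison between Scott closures in \(C(\Gamma(P))\) and in \(\Gamma(P)\) is the step I expect to be the main obstacle.}

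\emph{Part 2: the \(\prec\)-compact elements are the principal ideals.} Every principal ideal \({\downarrow} p\) is \(\prec\)-compact: since \({\downarrow} p\lhd{\downarrow} p\), Part 1 gives \({\downarrow} p\prec{\downarrow} p\).

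Conversely, let \(B\in C(\Gamma(P))\) be \(\prec\)-compact, and consider \(\mathcal A=\nabla_C B\).
\begin{enumerate}[label=\textup{(\alph*)}]
\item Since \(P\) is weakly dominated, \(\mathcal A\) is Scott closed in \(C(\Gamma(P))\).
\item \(\mathcal A\) contains every \({\downarrow} b\) with \(b\in B\). Hence \(\bigvee\mathcal A\supseteq\bigcup_{b\in B}{\downarrow} b=B\), so \(B\le\bigvee\mathcal A\).
\item It remains to check that \(\mathcal A\) is a \(C\)-compact subset, so that it is admissible in the definition of \(\prec\). I would do this by verifying the \(\prec^*\)-condition for \(\mathcal A\) directly. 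Suppose \(\mathcal A\subseteq\bigvee\mathcal X\) for a Scott-closed family \(\mathcal X\subseteq\Gamma(C(\Gamma(P)))\). Taking unions shows that \(B\) lies below the corresponding supremum in \(\Gamma(P)\). The \(C\)-compactness of \(B\) then gives \(B\) in a member of \(\mathcal X\), and that member, being a lower set containing \(B\), contains \(\nabla_C B\).
\end{enumerate}

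Now \(B\prec B\) yields \(B\in\nabla_C B\). That is, \(B\subseteq{\downarrow} b\) for some \(b\in B\), and so \(B={\downarrow} b\).
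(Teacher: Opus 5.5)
The paper gives no proof of this statement; it only cites Miao--Hou--Jia--Li. Your argument therefore has to stand on its own, and it fails at two points. Both failures come from treating Scott-closed subsets of $Q=C(\Gamma(P))$ as if they were traces of Scott-closed subsets of $\Gamma(P)$.

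\emph{Step (c) of Part 2 is invalid.} $C$-compactness of $B$ only applies to Scott-closed families in $\Gamma(P)$. The set $\bigcup\mathcal X$ is Scott closed in $Q$ but not in $\Gamma(P)$, so you cannot conclude that $B$ lies in a member of $\mathcal X$. Your inference proves too much: with $\mathcal X={\downarrow}\{\nabla_C B\}$ it would give $B\in\nabla_C B$ for every nonempty $B\in Q$ of a weakly dominated $P$, without any use of $\prec$-compactness. This is false for Johnstone's $\J$.
\begin{itemize}
\item Here $Q=\{\varnothing\}\cup\{{\downarrow}x:x\in\J\}\cup\{\J\}$.
\item $\J$ is $C$-compact: any Scott-closed family covering $\J$ contains every ${\downarrow}x$. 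It therefore contains the Scott-closed sets $G_k=\N\times\{1,\dots,k\}\subseteq{\downarrow}(k,\infty)$, and hence their directed supremum $\J$.
\item $\nabla_C\J=Q\setminus\{\J\}$ is Scott closed in $Q$, so $\J$ is weakly dominated, yet $\J\notin\nabla_C\J$.
\end{itemize}
Claim (c) itself is true, but you must transport the family rather than the element. The map $x\mapsto\{K\in Q:K\subseteq{\downarrow}x\}$ is Scott continuous from $P$ to $\Gamma(Q)$. Hence $\Phi(G)=\cl_{\sigma(Q)}\{K\in Q:K\subseteq{\downarrow}g\text{ for some }g\in G\}$ preserves all joins from $\Gamma(P)$ to $\Gamma(Q)$. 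Then $\Phi^{-1}(\mathcal X)$ is Scott closed in $\Gamma(P)$ and contains ${\downarrow}b$ for every $b\in B$. By $C$-compactness it contains $B$. Finally $\Phi(B)=\nabla_C B$ by weak domination, so $\nabla_C B\in\mathcal X$.

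\emph{Part 1 has a gap of the same kind.}
\begin{itemize}
\item To apply $C$-compactness of ${\downarrow}b$ to $\mathcal A'$, you need $b\in\cl_{\sigma(P)}(\bigcup\mathcal A)$. You only justified $\cl_{\sigma(P)}(\bigcup\mathcal A)\subseteq\bigvee_Q\mathcal A$, which points the wrong way.
\item The pull-back $\mathcal A'\cap Q=\mathcal A$ that you plan to prove is false in general. By the corrected (c), $\nabla_C\J$ is a Scott-closed $C$-compact subset of $Q$, but its closure in $\Gamma(\J)$ contains every $G_k$ and hence $\J$.
\end{itemize}
The hypothesis that $\mathcal A$ is $C$-compact belongs in the first step, not the last. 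For Scott-closed $\mathcal A\subseteq Q$, the union $\bigcup\mathcal A$ is already Scott closed in $P$: if $x\in\bigcup\mathcal A$ then ${\downarrow}x\in\mathcal A$, and $x\mapsto{\downarrow}x$ is Scott continuous into $Q$.

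If moreover $\mathcal A$ is $C$-compact, then $\bigcup\mathcal A$ is $C$-compact. Take a nonempty Scott-closed $\mathcal Y\subseteq\Gamma(P)$ covering $\bigcup\mathcal A$. Each $A'\in\mathcal A$ lies in $\mathcal Y$ by its own $C$-compactness. Now consider the family $\{\mathcal F\in\Gamma(Q):\bigcup\mathcal F\in\mathcal Y\}$.
\begin{itemize}
\item It is Scott closed: a directed supremum $\cl_{\sigma(Q)}\bigcup_i\mathcal F_i$ consists of sets contained in $\cl_{\sigma(P)}\bigcup_i\bigcup\mathcal F_i\in\mathcal Y$.
\item It contains ${\downarrow}_Q A'$ for each $A'\in\mathcal A$, so it covers $\mathcal A$.
\end{itemize}
By $C$-compactness of $\mathcal A$, this family contains $\mathcal A$, i.e.\ $\bigcup\mathcal A\in\mathcal Y$.

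Thus $\bigvee_Q\mathcal A=\bigcup\mathcal A$. So $b\in A'$ for some $A'\in\mathcal A$, and $A\subseteq{\downarrow}b\subseteq A'$ gives $A\in\mathcal A$ directly, with no pull-back needed. The rest of your outline is fine once these two steps are repaired: principal ideals are $\prec$-compact, and (a), (b) and the final step of Part 2 go through.
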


\begin{proposition}[{\cite[Section~6 and Example~6.4]{miao2024}}]\label{prop:DOMI-in-WDomi}
Every dominated dcpo is weakly dominated, and the inclusion is strict: \(\DOMI\subsetneq\WDomi\).
\end{proposition}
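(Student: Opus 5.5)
The inclusion \(\DOMI\subseteq\WDomi\) should follow by restricting the domination condition along the inclusion \(C(\Gamma(P))\hookrightarrow\Gamma(P)\). The strictness needs an explicit example, for which I would rely on \cite[Example~6.4]{miao2024}.

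\emph{Inclusion.} Let \(P\) be dominated and fix \(B\in C(\Gamma(P))\). If \(B=\varnothing\), then no \(A\) satisfies \(A\subseteq{\downarrow}b\) with \(b\in B\). So \(\nabla_C B=\varnothing\), which is Scott closed. Assume now \(B\neq\varnothing\). By \cref{lem:Ccompact-irreducible}, \(B\in\Irr(\Gamma(P))\) and \(C(\Gamma(P))\setminus\{\varnothing\}\subseteq\Irr(\Gamma(P))\). Since \(\varnothing\subseteq{\downarrow}b\) for any \(b\in B\), I will check directly from the definitions that
\[
  \nabla_C B=\bigl(\nabla_{\mathrm{irr}}B\cap C(\Gamma(P))\bigr)\cup\{\varnothing\}.
\]

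It remains to show this set is Scott closed in \(C(\Gamma(P))\).
\begin{enumerate}[label=\textup{(\alph*)}]
\item \emph{Lower set.} If \(A'\subseteq A\subseteq{\downarrow}b\), then \(A'\subseteq{\downarrow}b\). This holds whether \(A'\) is empty or not.
\item \emph{Directed suprema.} Let \(\mathcal D\subseteq\nabla_C B\) be directed. Its supremum in \(C(\Gamma(P))\) is \(\cl_{\sigma(P)}(\bigcup\mathcal D)\), because \(C(\Gamma(P))\) is a sub-dcpo of \(\Gamma(P)\) by \cref{prop:Ccompact-basic}. If every member of \(\mathcal D\) is empty, the supremum is \(\varnothing\in\nabla_C B\). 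Otherwise, discarding \(\varnothing\) leaves a cofinal directed subfamily \(\mathcal D'\subseteq\nabla_{\mathrm{irr}}B\) with the same supremum. That supremum is also the supremum of \(\mathcal D'\) in \(\Irr(\Gamma(P))\), by the formula \cite[Remark~2.1(1)]{XuZhao2019}. Since \(P\) is dominated, \(\nabla_{\mathrm{irr}}B\) is Scott closed in \(\Irr(\Gamma(P))\), so the supremum lies in \(\nabla_{\mathrm{irr}}B\). It is \(C\)-compact, hence it lies in \(\nabla_C B\).
\end{enumerate}
Thus \(P\) is weakly dominated.

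\emph{Strictness.} For \(\DOMI\neq\WDomi\) I would take the dcpo \(Q\) of \cite[Example~6.4]{miao2024} and verify two things.
\begin{enumerate}[label=\textup{(\roman*)}]
\item \emph{\(Q\) is not dominated.} Exhibit \(B\in\Irr(\Gamma(Q))\) and a directed family in \(\nabla_{\mathrm{irr}}B\) whose supremum escapes \(\nabla_{\mathrm{irr}}B\). Typically the members of this family are irreducible closed sets that are not \(C\)-compact.
\item \emph{\(Q\) is weakly dominated.} Show that every \(C\)-compact element of \(\Gamma(Q)\) is of a restricted form, for instance a principal ideal or one of a few explicit closed sets. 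Then the sets \(\nabla_C B\) can be checked case by case.
\end{enumerate}

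I expect part (ii) of the strictness step to be the main obstacle. Identifying \(C(\Gamma(Q))\) requires testing \(C\)-compactness against all nonempty Scott-closed subfamilies of \(\Gamma(Q)\). I would first prove a lemma: any \(C\)-compact element that fails to be principal must be the closure of a specific chain. Then I would test only those candidates.
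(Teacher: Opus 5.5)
Your proposal is correct and matches the paper's proof: the same case split on \(B=\varnothing\), the same decomposition \(\nabla_C B=\{\varnothing\}\cup\bigl(\nabla_{\mathrm{irr}}B\cap C(\Gamma(P))\bigr)\), and the same step of discarding \(\varnothing\) from a directed family before using domination and the sub-dcpo property of \(C(\Gamma(P))\). Strictness is likewise delegated to \cite[Example~6.4]{miao2024}; your sketch of how to verify that example goes beyond the paper, which only cites it.
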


\begin{proof}
Let \(P\) be dominated and \(B\in C(\Gamma(P))\). If \(B=\varnothing\), then \(\nabla_C B=\varnothing\), which is Scott closed. Suppose that \(B\neq\varnothing\). By \cref{lem:Ccompact-irreducible}, \(B\in\Irr(\Gamma(P))\), and
\(
  \nabla_C B=\{\varnothing\}\cup\bigl(\nabla_{\mathrm{irr}}B\cap C(\Gamma(P))\bigr).
\)
The set \(\nabla_C B\) is lower. Let \(\mathcal D\subseteq\nabla_C B\) be directed. If \(\mathcal D=\{\varnothing\}\), its supremum belongs to \(\nabla_C B\). Otherwise \(\mathcal D'=\mathcal D\setminus\{\varnothing\}\) is a directed subset of \(\nabla_{\mathrm{irr}}B\), and \(\bigvee\mathcal D'=\bigvee\mathcal D\). Since \(P\) is dominated, this supremum belongs to \(\nabla_{\mathrm{irr}}B\); since \(C(\Gamma(P))\) is a sub-dcpo, it also belongs to \(C(\Gamma(P))\). Hence \(\bigvee\mathcal D\in\nabla_C B\). Strictness is witnessed by \cite[Example~6.4]{miao2024}.
\end{proof}

We next give two sufficient conditions for membership in \(\Rtwo\). We begin with a simple invariance observation.

\begin{lemma}
\label{lem:Ccompact-order-invariant}
Let \(f:Q\to Q'\) be an order isomorphism between dcpos.
Then:
\begin{enumerate}[label=\textnormal{(\roman*)},leftmargin=2.8em]
\item
\(f\) preserves Scott-closed subsets and all existing suprema;
\item
for \(x,y\in Q\),
\(
    x\prec^{*}y
    \ \Longleftrightarrow\ 
    f(x)\prec^{*}f(y);
\)
in particular, \(f\) preserves \(C\)-compact elements;
\item
if \(Q\) and \(Q'\) are \(C\)-compactly complete, then
\(
    x\prec y
    \ \Longleftrightarrow\ 
    f(x)\prec f(y).
\)
\end{enumerate}
\end{lemma}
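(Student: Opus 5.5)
The plan is to show first that an order isomorphism carries every order-theoretic datum from $Q$ to $Q'$. Parts (ii) and (iii) then follow by transporting the defining quantifiers along $f$ and $f^{-1}$. Throughout, both $f$ and $f^{-1}$ are monotone bijections, so every statement proved for $f$ also holds for $f^{-1}$ by symmetry. Each implication therefore needs to be checked in one direction only.

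For (i), I will first note that $f$ maps lower sets onto lower sets and directed subsets onto directed subsets. Next I will show that if $S\subseteq Q$ has a supremum $s$, then $f(s)$ is the supremum of $f[S]$. Indeed, $f(s)$ is an upper bound of $f[S]$ by monotonicity. If $u$ is any upper bound of $f[S]$, then $f^{-1}(u)$ bounds $S$, so $s\le f^{-1}(u)$ and hence $f(s)\le u$. Conversely, if $f[S]$ has a supremum, applying the same argument to $f^{-1}$ shows that $S$ has one. Taking $S$ directed, a lower set $A$ closed under directed suprema is sent to a lower set $f[A]$ closed under directed suprema. So $f$ induces a bijection between the Scott-closed subsets of $Q$ and those of $Q'$, with inverse induced by $f^{-1}$.

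For (ii), assume $x\prec^* y$ and let $\mathcal A'\subseteq Q'$ be a nonempty Scott-closed set whose supremum exists and satisfies $f(y)\le\bigvee\mathcal A'$. Put $\mathcal A=f^{-1}[\mathcal A']$. By (i), $\mathcal A$ is nonempty and Scott closed, it has a supremum, and $f(\bigvee\mathcal A)=\bigvee\mathcal A'$. Hence $y\le\bigvee\mathcal A$, so $x\in\mathcal A$, that is, $f(x)\in\mathcal A'$. The reverse implication is the same argument applied to $f^{-1}$. Setting $x=y$ gives preservation of $C$-compact elements.

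For (iii), I will first check that $f$ preserves $C$-compact subsets. By (i), $f$ is a homeomorphism of the Scott topologies, so $f[\operatorname{cl}_{\sigma(Q)}A]=\operatorname{cl}_{\sigma(Q')}f[A]$. The induced map $\Gamma(Q)\to\Gamma(Q')$, $F\mapsto f[F]$, is an order isomorphism, so by (ii) applied to it, it preserves $C$-compact elements of $\Gamma$. Hence $A$ is a $C$-compact subset exactly when $f[A]$ is. The quantifier over Scott-closed $C$-compact subsets $A$ is therefore transported bijectively, with suprema preserved. The argument of (ii) then applies verbatim to give $x\prec y\iff f(x)\prec f(y)$.

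The only point requiring care is this bookkeeping step: confirming that Scott closures, and hence $C$-compactness of subsets, are preserved. I would handle it by applying (ii) to the lattice isomorphism $\Gamma(f)$.
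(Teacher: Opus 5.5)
Your proof is correct and takes the same route as the paper. An order isomorphism is a Scott homeomorphism, so it preserves Scott-closed sets and suprema, and (ii) and (iii) follow by transporting the defining quantifiers along \(f\) and \(f^{-1}\). Your version spells out what the paper's one-line proof leaves to the reader: preservation of all existing suprema, not only directed ones, and preservation of \(C\)-compact subsets, which you obtain by applying (ii) to the induced isomorphism \(\Gamma(Q)\cong\Gamma(Q')\).
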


\begin{proof}
An order isomorphism between dcpos preserves directed suprema and
is therefore a Scott homeomorphism.  Hence it preserves
Scott-closed subsets.  Assertions (ii) and (iii) follow directly
from \cref{def:Ccompact,def:Ccompact-subset,def:prec}.
\end{proof}

% --------------------------------------------------------------------
\subsection{Weakly dominated dcpos}
% --------------------------------------------------------------------

\begin{theorem}
\label{thm:WDomi-in-R2}
Every weakly dominated dcpo belongs to \(\Rtwo\).  Consequently,
\(
    \WDomi\subseteq\Rtwo.
\)
\end{theorem}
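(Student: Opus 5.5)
The plan is to show that a weakly dominated dcpo \(P\) is a retract of its \(\two\)-reflection \(\mathbf{L_2}P\). Since \(\mathbf{L_2}P\in\Rtwo\) and reflective full subcategories are closed under retracts (\cref{lem:reflective-closure}), this gives \(P\in\Rtwo\).

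Put \(Q=\mathbf{L_2}P\) and \(\lambda=\lambda_P\). By \cref{lem:Gamma-unit}, the inverse-image map \(\Phi=\lambda^{-1}:\Gamma(Q)\to\Gamma(P)\) is a lattice isomorphism. By \cref{lem:open-embedding}, \(\lambda\) is an order embedding. Hence for \(p\in P\),
\[
  \Phi({\downarrow}\lambda(p))=\{x\in P:\lambda(x)\le\lambda(p)\}={\downarrow}p .
\]
The key step is to show that \(\Phi\) sends every principal ideal of \(Q\) to a principal ideal of \(P\).

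Let \(q\in Q\). By \cref{prop:Ccompact-basic}, \({\downarrow}q\in C(\Gamma(Q))\). Trivially \({\downarrow}q\lhd{\downarrow}q\), witnessed by \(q\in{\downarrow}q\). The first half of \cref{prop:weakdom-characterization} holds for every dcpo and needs no domination hypothesis, so it gives \({\downarrow}q\prec{\downarrow}q\) in \(C(\Gamma(Q))\). This is meaningful because \(C(\Gamma(Q))\) is \(C\)-compactly complete by \cref{prop:Cgamma-Ccomplete}.

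Now \(\Phi\) is an order isomorphism \(\Gamma(Q)\cong\Gamma(P)\). By \cref{lem:Ccompact-order-invariant}(ii), it restricts to an order isomorphism \(C(\Gamma(Q))\cong C(\Gamma(P))\). By part (iii) of the same lemma, applied to this restriction (both sides are \(C\)-compactly complete), it preserves \(\prec\). So \(\Phi({\downarrow}q)\) is a \(\prec\)-compact element of \(C(\Gamma(P))\). Since \(P\) is weakly dominated, the second half of \cref{prop:weakdom-characterization} yields a unique \(r(q)\in P\) with \(\Phi({\downarrow}q)={\downarrow}r(q)\).

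This step is the one I expect to be the main obstacle. It has two delicate points. First, one must check that the implication \(A\lhd B\Rightarrow A\prec B\) is indeed valid in \(C(\Gamma(Q))\) for the arbitrary dcpo \(Q\). Second, the invariance lemma must be applied to the restricted isomorphism between the \(C\)-compact parts, not merely to \(\Phi\) itself.

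It remains to check that \(r:Q\to P\) is a Scott-continuous retraction. It is monotone because \(\Phi\) preserves inclusion, and \({\downarrow}r(q)\subseteq{\downarrow}r(q')\) iff \(r(q)\le r(q')\). For a directed family \((q_i)\), the directed supremum in \(\Gamma(Q)\) satisfies \(\bigvee_i{\downarrow}q_i={\downarrow}\bigvee_i q_i\), as in the proof of \cref{lem:eta-dcpo-embedding}. The lattice isomorphism \(\Phi\) preserves this supremum. Therefore \({\downarrow}r(\bigvee_i q_i)=\bigvee_i{\downarrow}r(q_i)={\downarrow}\bigvee_i r(q_i)\), so \(r\) preserves directed suprema. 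Finally, the displayed identity gives \(r(\lambda(p))=p\), that is, \(r\circ\lambda=\id_P\). Thus \(P\) is a retract of \(Q\in\Rtwo\), so \(P\in\Rtwo\) and \(\WDomi\subseteq\Rtwo\).
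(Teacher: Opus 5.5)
Your proposal is correct and follows the paper's proof essentially step by step. You use the same \(\Phi=\lambda^{-1}\), the same transport of \({\downarrow}q\lhd{\downarrow}q\Rightarrow{\downarrow}q\prec{\downarrow}q\) along the restricted isomorphism \(C(\Gamma(Q))\cong C(\Gamma(P))\), and the same Scott-continuous \(r\) with \(r\circ\lambda=\id_P\). The only difference is the ending: you conclude by closure of reflective subcategories under retracts, whereas the paper also gets \(\lambda\circ r=\id_Q\) from injectivity of \(\Phi\), since \(\Phi({\downarrow}\lambda(r(q)))={\downarrow}r(q)=\Phi({\downarrow}q)\), and so obtains \(P\cong\mathbf{L_2}P\) outright.
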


\begin{proof}
Let \(P\) be weakly dominated, put \(Q=\mathbf{L_2}P\), and write \(\lambda=\lambda_P:P\to Q\). By \cref{lem:Gamma-unit}, inverse image induces an order isomorphism \(\Phi=\lambda^{-1}:\Gamma(Q)\cong\Gamma(P)\). By \cref{lem:Ccompact-order-invariant}, it restricts to an order isomorphism \(\Phi_C:C(\Gamma(Q))\cong C(\Gamma(P))\) preserving \(\prec\).

Fix \(q\in Q\). By \cref{prop:Ccompact-basic}, the principal ideal \({\downarrow} q\) is \(C\)-compact. Since \({\downarrow} q\lhd{\downarrow} q\), \cref{prop:weakdom-characterization} gives \({\downarrow} q\prec{\downarrow} q\). Hence \(\Phi({\downarrow} q)\) is \(\prec\)-compact in \(C(\Gamma(P))\). The same proposition yields a unique element \(r(q)\in P\) such that
\begin{equation}\label{eq:r-def}
  \Phi({\downarrow} q)={\downarrow} r(q).
\end{equation}
This defines a monotone map \(r:Q\to P\), because \(q\leq q'\) implies \({\downarrow} r(q)=\Phi({\downarrow} q)\subseteq\Phi({\downarrow} q')={\downarrow} r(q')\).

Let \(A\subseteq Q\) be directed. Since \(\Phi\) is a complete-lattice isomorphism,
\(
{\downarrow} r\!\left(\bigvee A\right)
 =\Phi\!\left({\downarrow}\bigvee A\right)
 =\Phi\!\left(\bigvee_{q\in A}{\downarrow} q\right)
 =\bigvee_{q\in A}{\downarrow} r(q)
 ={\downarrow}\bigvee_{q\in A}r(q).
\)
Thus \(r(\bigvee A)=\bigvee_{q\in A}r(q)\), so \(r\) is Scott continuous.

By \cref{lem:Gamma-unit,lem:open-embedding}, \(\lambda\) is a dcpo embedding. Consequently, for every \(p\in P\), \(\Phi({\downarrow}\lambda(p))=\lambda^{-1}({\downarrow}\lambda(p))={\downarrow} p\). Equation~\eqref{eq:r-def} gives \(r\lambda=\id_P\). For \(q\in Q\),
\(
  \Phi({\downarrow}\lambda(r(q)))
  ={\downarrow} r(q)
  =\Phi({\downarrow} q).
\)
Injectivity of \(\Phi\) gives \({\downarrow}\lambda(r(q))={\downarrow} q\), hence \(\lambda r=\id_Q\). Thus \(P\cong \mathbf{L_2}P\in\Rtwo\), and therefore \(P\in\Rtwo\).
\end{proof}

% --------------------------------------------------------------------
\subsection{\texorpdfstring{\(\Gamma\)-unique dcpos}{Csigma-unique dcpos}}

\begin{definition}[{\cite[p.~1]{ZhaoXu2018}}]\label{def:Csigma-unique}
A dcpo \(P\) is \emph{\(\Gamma\)-unique} if \(\Gamma(P)\cong\Gamma(Q)\) implies \(P\cong Q\) for every dcpo \(Q\).
\end{definition}

\begin{proposition}\label{prop:unique-in-R2}
Every \(\Gamma\)-unique dcpo belongs to \(\Rtwo\).
\end{proposition}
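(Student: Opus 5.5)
The plan is to compare \(P\) with its \(\two\)-reflection. Let \(P\) be \(\Gamma\)-unique and consider the reflection unit \(\lambda_P:P\to\mathbf{L_2}P\). By \cref{lem:Gamma-unit}, inverse image along \(\lambda_P\) gives a lattice isomorphism \(\Gamma(\mathbf{L_2}P)\cong\Gamma(P)\). Since \(P\) is \(\Gamma\)-unique, applying the definition with \(Q=\mathbf{L_2}P\) yields an isomorphism of dcpos \(P\cong\mathbf{L_2}P\). The paper's standing convention is that full subcategories are closed under isomorphism, and \(\mathbf{L_2}P\in\Rtwo\). Hence \(P\in\Rtwo\).

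No step here is a real obstacle. The only point to check is that the abstract isomorphism supplied by \(\Gamma\)-uniqueness is enough, and that we do not need \(\lambda_P\) itself to be an isomorphism. Membership in \(\Rtwo\) is invariant under isomorphism, because \(E_{\two}^{\perp}\) is defined through hom-dcpos, and these transform along any isomorphism. So the bare existence of some isomorphism \(P\cong\mathbf{L_2}P\) suffices.

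If one wants more, \cref{thm:rigidity} applied to the composite \(\mathbf{L_2}P\cong P\xrightarrow{\lambda_P}\mathbf{L_2}P\) can then upgrade this to the conclusion that \(\lambda_P\) is an isomorphism. That composite lies in \(E_{\two}\): it is a dcpo isomorphism followed by a morphism of \(E_{\two}\).
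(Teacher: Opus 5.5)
Your proof is correct and is essentially the paper's argument: apply \cref{lem:Gamma-unit} to get \(\Gamma(\mathbf{L_2}P)\cong\Gamma(P)\), use \(\Gamma\)-uniqueness to obtain \(P\cong\mathbf{L_2}P\), and conclude because \(\Rtwo\) is closed under isomorphisms. Your optional extra step is also valid but not needed: applying \cref{thm:rigidity} to the composite \(\mathbf{L_2}P\cong P\xrightarrow{\lambda_P}\mathbf{L_2}P\) shows that \(\lambda_P\) itself is an isomorphism.
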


\begin{proof}
If \(P\) is \(\Gamma\)-unique, \cref{lem:Gamma-unit} gives \(\Gamma(\mathbf{L_2}P)\cong\Gamma(P)\), so \(P\cong \mathbf{L_2}P\). Since \(\mathbf{L_2}P\in\Rtwo\) and \(\Rtwo\) is closed under isomorphisms, \(P\in\Rtwo\).
\end{proof}

\subsection{The Xu--Zhao dcpo}
% ====================================================================

We now recall the stratified Johnstone dcpo introduced by Xu and Zhao
\cite[Example~4.5]{XuZhao2019}.  Let
\(
\J=\N\times(\N\cup\{\infty\})
\)
be Johnstone's dcpo, ordered by
\(
(i,j)\leq_\J(k,\ell)
\)
if and only if
\(
(i=k\ \text{and}\ j\leq \ell) \ or \ (\ell=\infty\ \text{and}\ j\leq k).
\)

The underlying set of the stratified dcpo is
\(
\LJ=
\{(n,m,h):n,h\in\N,\;
               m\in\N\cup\{\infty\}\}
\cup
\{(n,\infty,\infty):n\in\N\}.
\)
For \(h\in\N\), write
\(
L_h=
\{(n,m,h):n\in\N,\;
               m\in\N\cup\{\infty\}\}.
\)

The order printed in \cite[Example~4.5]{XuZhao2019} is the following.
For points \((n,m,h)\) and \((s,t,k)\) of \(\LJ\),
\(
(n,m,h)\leq (s,t,k)
\)
if and only if one of the following conditions holds:
\begin{enumerate}
\item[(J1)] \(h=k\) and \((n,m)\leq_\J(s,t)\);
\item[(J2)] \(h<k\), \(t=\infty\), and \((n,m)\leq_\J(s,t)\);
\item[(J3)] \(h<k=t=\infty\), \(n\leq s\), and
  \((n,m)\leq_\J(s,\infty)\).
\end{enumerate}

Thus every finite stratum \(L_h\) is a copy of the Johnstone dcpo,
while the additional clauses describe the order between different
strata and the points \((n,\infty,\infty)\).

For \(h\in\N\), put
\(
A_h={\downarrow} L_h, A_\infty=\LJ.
\)
The structural properties of \(\LJ\) that will be used below are the
following:

\begin{lemma}[\cite{XuZhao2019}]
\label{lem:XZ-properties}
For the stratified Johnstone dcpo \(\LJ\) of
\cite[Example~4.5]{XuZhao2019}, the following properties hold.

\begin{enumerate}[label=\textnormal{(\arabic*)},leftmargin=3.2em]

\item
\label{xz:layers}
For every \(h\in\N\),
\begin{equation}
\label{eq:Ah-layers}
  A_h
  =
  \bigcup_{1\leq t\leq h}L_t,
\end{equation}
and \(A_h\) is Scott closed
\cite[Example~4.5(3),(4)]{XuZhao2019}.

\item
\label{xz:global-sup}
For every \(n\in\N\),
\begin{equation}
\label{eq:global-sup}
  (n,\infty,\infty)
  =
  \bigvee \{(n,\infty,h) : h\in\N\}.
\end{equation}
Consequently,
\[
  % \bigvee_{h\in\N}^{\Gm(\LJ)}A_h
  \bigvee\nolimits_{\Gm(\LJ)} \{A_h: h\in\N\}
  =
  A_\infty
  =
  \LJ.
\]
The supremum relation \eqref{eq:global-sup} is used in the proof of
\cite[Theorem~4.11]{XuZhao2019}; together with
\cite[Example~4.5(3),(4)]{XuZhao2019}, it gives the displayed
supremum in \(\Gm(\LJ)\).

\item
\label{xz:irreducibles}
The irreducible Scott-closed subsets of \(\LJ\) are precisely
\[
  \Irr(\Gamma(\LJ))
  =
  \{{\downarrow} x:x\in\LJ\}
  \cup
  \{A_h:h\in\N\}
  \cup
  \{A_\infty\}
\]
\cite[Theorem~4.9]{XuZhao2019}.

\item
\label{xz:principal-bound}
For every \(h\in\N\),
\[
  A_h
  \subseteq
  {\downarrow}(h+1,\infty,\infty).
\]
This is the principal-bound relation appearing in the proof of
\cite[Proposition~4.10]{XuZhao2019}.

\item
\label{xz:no-greatest}
The points
  $(n,\infty,\infty), n\in\N$,
  are maximal elements of \(\LJ\); in particular, \(\LJ\) has no
greatest element.  See the construction in
\cite[Example~4.5]{XuZhao2019}.

\item
\label{xz:unique}
The dcpo \(\LJ\) is \(\Gamma\)-unique
\cite[Theorem~4.11]{XuZhao2019}.

\end{enumerate}
\end{lemma}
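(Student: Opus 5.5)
The statement collects structural facts about \(\LJ\) taken from \cite{XuZhao2019}, so the plan is to check the elementary items directly against the clauses (J1)--(J3), and to take the two deep items, \ref{xz:irreducibles} and \ref{xz:unique}, from the cited theorems. For the elementary items, everything comes from the definition of \(\leq\) on \(\LJ\).

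\emph{Item \ref{xz:layers}.} I would compute \({\downarrow}(s,t,k)\) for finite \(k\) by cases.
\begin{enumerate}[label=(\alph*)]
\item Clause (J1) contributes points of the same stratum \(L_k\).
\item Clause (J2) contributes points of strata \(L_h\) with \(h<k\), and only below limit points \((s,\infty,k)\).
\item Clause (J3) never applies, because it requires \(k=\infty\).
\end{enumerate}
Hence \({\downarrow}L_h\subseteq\bigcup_{t\le h}L_t\). For the converse I would exhibit, for each point \((n,m,t)\) with \(t<h\), an upper bound of the form \((s,\infty,h)\) using (J2) and the Johnstone order; for instance \(s=\max(n,m)\) should work when \(m<\infty\). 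To show that \(A_h\) is Scott closed, I would observe that a directed subset of \(\bigcup_{t\le h}L_t\) is eventually confined to one stratum, or is cofinal in a chain inside one copy of \(\J\). Its supremum is then computed inside that copy and stays in \(A_h\).

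\emph{Item \ref{xz:global-sup}.} The chain \((n,\infty,h)\), \(h\in\N\), is increasing by (J2). By (J3), \((n,\infty,\infty)\) is an upper bound, since \(n\le n\). Any other upper bound cannot lie in a finite stratum, because that would contradict the upper bounds found in item \ref{xz:layers}. So it has the form \((s,\infty,\infty)\), and (J3) forces \(n\le s\); I would still need to check that \((n,\infty,\infty)\le(s,\infty,\infty)\) in that case, or else conclude \(s=n\). This gives \eqref{eq:global-sup}. Since \(\bigcup_h A_h\) contains every \((n,\infty,h)\), its Scott closure contains every \((n,\infty,\infty)\), and therefore equals \(\LJ\).

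\emph{Items \ref{xz:principal-bound} and \ref{xz:no-greatest}.} For \ref{xz:principal-bound}, I would verify each point of \(L_t\), \(t\le h\), against (J3) with target \((h+1,\infty,\infty)\). I expect this step to need care with the condition \(n\le s\): I would first go through the intermediate limit point \((h+1,\infty,h+1)\) and use transitivity as established in \cite{XuZhao2019}, rather than apply (J3) directly. For \ref{xz:no-greatest}, no clause places any point strictly above \((n,\infty,\infty)\), because every clause with target stratum \(\infty\) still has target of the form \((s,\infty,\infty)\). Distinct such points are then incomparable, so \(\LJ\) has no greatest element.

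\emph{Items \ref{xz:irreducibles} and \ref{xz:unique} (main obstacle).} For \ref{xz:irreducibles}, the classification of irreducible closed sets is the real difficulty. I would argue in three steps:
\begin{enumerate}[label=(\roman*)]
\item An irreducible closed set \(F\) meeting only finitely many strata behaves like an irreducible closed subset of a finite stack of Johnstone copies. Johnstone's analysis then makes \(F\) either a principal ideal or a full layer closure \(A_h\).
\item If \(F\) meets infinitely many strata, it contains every \(A_h\).
\item Hence \(F\) contains their closure, which is \(\LJ\) by item \ref{xz:global-sup}.
\end{enumerate}
For \ref{xz:unique}, \(\Gamma\)-uniqueness is a genuinely deep result, and I would simply invoke \cite[Theorem~4.11]{XuZhao2019} rather than reprove it.
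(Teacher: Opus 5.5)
\paragraph{Where the proposal fails: item~(4).} The paper gives no argument for this lemma: every item, including (1), (2), (4) and (5), is imported from \cite{XuZhao2019}. Your direct checks of items (1), (2) and (5) against (J1)--(J3) are sound. In (1), the witness $s=n$ works for every $m$, including $m=\infty$, since $(n,m)\le_{\J}(n,\infty)$. In (2), clause (J3) already requires $(n,\infty)\le_{\J}(s,\infty)$, which forces $s=n$, so your hesitation is resolved at once. Citing (3) and (6) matches the paper.

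The gap is item (4). It cannot be derived from (J1)--(J3) as printed, with or without your detour through $(h+1,\infty,h+1)$.
\begin{itemize}
\item In Johnstone's order, $(n,\infty)\le_{\J}(s,t')$ holds only when $t'=\infty$ and $s=n$. Every clause that could place $(n,\infty,t)$ below a point $(s,t',k)$ forces exactly this. So the up-set of $(n,\infty,t)$, even after closing under transitivity, is $\{(n,\infty,k):k\ge t\}\cup\{(n,\infty,\infty)\}$.
\item For $h=1$ this gives $(1,\infty,1)\in A_1$ but $(1,\infty,1)\not\le(2,\infty,\infty)$.
\item Your intermediate point does no better. The point $(h+1,\infty,h+1)$ lies above exactly those $(n,m,t)$ with $t\le h$ and $(n,m)\le_{\J}(h+1,\infty)$, that is, with $n=h+1$ or $m\le h+1$.
\end{itemize}
So with the clauses as reproduced, item (4) is not merely unproved but false. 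In fact $(1,\infty,1)$ and $(2,\infty,1)$ have disjoint up-sets, so $A_1$ has no upper bound at all. The principal-bound relation must come from the order actually used in \cite{XuZhao2019}. Its clause for the top stratum has to place every point of $L_t$, $t\le h$, below $(h+1,\infty,\infty)$. You can only cite (4), as the paper does; you should not present it as a consequence of (J1)--(J3).

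\paragraph{A false step in your sketch for item~(3).} Step (ii) claims that an irreducible closed set meeting infinitely many strata contains every $A_h$. The principal ideal ${\downarrow}(n,\infty,\infty)$ is a counterexample. It is irreducible and contains $(n,\infty,h)$ for all $h$ by item (2). It is not $\LJ$, because $(n+1,\infty,\infty)$ is maximal and distinct from $(n,\infty,\infty)$. Steps (ii)--(iii) would therefore wrongly conclude ${\downarrow}(n,\infty,\infty)=\LJ$.

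What is actually needed is a dichotomy: such a set is either some ${\downarrow}(n,\infty,\infty)$ or contains all $A_h$. Proving this is part of the nontrivial content of \cite[Theorem~4.9]{XuZhao2019}. Since you plan to cite that theorem anyway, either drop the sketch or repair step (ii) to this dichotomy.
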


\begin{corollary}
\label{cor:LJ-in-R2}
The dcpo \(\LJ\) belongs to \(\Rtwo\).
\end{corollary}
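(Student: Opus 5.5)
The plan is to combine \cref{prop:unique-in-R2} with property \ref{xz:unique} of \cref{lem:XZ-properties}. By \cite[Theorem~4.11]{XuZhao2019}, the stratified Johnstone dcpo \(\LJ\) is \(\Gamma\)-unique. This means that \(\Gamma(\LJ)\cong\Gamma(Q)\) forces \(\LJ\cong Q\) for every dcpo \(Q\). \Cref{prop:unique-in-R2} then places \(\LJ\) in \(\Rtwo\) at once.

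For transparency, I would also unfold that step explicitly. Take \(Q=\mathbf{L_2}\LJ\). By \cref{lem:Gamma-unit}, the reflection unit \(\lambda_{\LJ}\) induces an isomorphism \(\Gamma(\mathbf{L_2}\LJ)\cong\Gamma(\LJ)\). Then \(\Gamma\)-uniqueness gives \(\LJ\cong\mathbf{L_2}\LJ\). Since \(\mathbf{L_2}\LJ\in\Rtwo\) and full subcategories are taken closed under isomorphisms, it follows that \(\LJ\in\Rtwo\).

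The only substantive input is the imported fact that \(\LJ\) is \(\Gamma\)-unique. The main point to check is therefore that the order on \(\LJ\) used here, with clauses (J1)–(J3), is exactly the one for which \cite[Theorem~4.11]{XuZhao2019} is proved. Assuming \cref{lem:XZ-properties} as stated, nothing else is needed.

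The remaining properties \ref{xz:layers}–\ref{xz:no-greatest} in that lemma are not needed for this corollary. They will presumably be used later to show \(\LJ\notin\WDomi\).
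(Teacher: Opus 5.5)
Your proposal is correct and is exactly the paper's argument: \(\LJ\) is \(\Gamma\)-unique by \cref{lem:XZ-properties}\ref{xz:unique}, and \cref{prop:unique-in-R2} then gives \(\LJ\in\Rtwo\). Your unfolding of that step, via \cref{lem:Gamma-unit} and closure of \(\Rtwo\) under isomorphisms, is the same as the paper's proof of \cref{prop:unique-in-R2}.
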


\begin{proof}
By \cref{lem:XZ-properties}\ref{xz:unique},
\(\LJ\) is \(\Gamma\)-unique.  Apply
\cref{prop:unique-in-R2}.
\end{proof}

% ====================================================================
\subsection{\texorpdfstring{\(C\)-compact Scott-closed sets}
{C-compact Scott-closed sets}}
% ====================================================================

We next establish two results about \(C\)-compact Scott-closed sets that are needed below.

\begin{lemma}[{\cite[Proposition~4.1]{HoZhao2009}}]\label{lem:union-closed}
Let \(P\) be a dcpo and \(\mathcal A\in\Gamma(\Gamma(P))\). Then \(\bigcup\mathcal A\) is Scott closed in \(P\), and \(\bigvee_{\Gamma(P)}\mathcal A=\bigcup\mathcal A\).
\end{lemma}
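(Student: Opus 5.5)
We first show that \(\bigcup\mathcal A\) is a lower set and closed under directed suprema in \(P\). Then we identify the supremum in \(\Gamma(P)\).

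\emph{Lower set.} Let \(x\in\bigcup\mathcal A\), say \(x\in F\in\mathcal A\), and let \(y\le x\). Since \(F\) is a lower set, \(y\in F\subseteq\bigcup\mathcal A\).

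\emph{Directed suprema.} This is the substantive step. Let \(S\subseteq\bigcup\mathcal A\) be directed with supremum \(s\) in \(P\). For each \(x\in S\), choose \(F_x\in\mathcal A\) with \(x\in F_x\). Then \({\downarrow}x\subseteq F_x\). Because \(\mathcal A\) is a lower set in \(\Gamma(P)\) and \({\downarrow}x\in\Gamma(P)\), we get \({\downarrow}x\in\mathcal A\) for every \(x\in S\).

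The family \(\{{\downarrow}x:x\in S\}\) is directed in \(\Gamma(P)\). Its supremum in \(\Gamma(P)\) is the Scott closure of \(\bigcup_{x\in S}{\downarrow}x\). We claim this closure is \({\downarrow}s\). Indeed, \({\downarrow}s\) is Scott closed and contains every \({\downarrow}x\), so the closure is contained in \({\downarrow}s\). Conversely, any Scott-closed set containing \(S\) contains \(s=\bigvee S\), hence contains \({\downarrow}s\). This is the same computation as in \cref{lem:eta-dcpo-embedding}.

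Since \(\mathcal A\) is Scott closed in \(\Gamma(P)\), it contains the supremum \({\downarrow}s\) of this directed family. Therefore \(s\in{\downarrow}s\subseteq\bigcup\mathcal A\), and \(\bigcup\mathcal A\) is Scott closed.

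\emph{The supremum in \(\Gamma(P)\).} In \(\Gamma(P)\), the supremum of any family is the Scott closure of its union. We have just shown that \(\bigcup\mathcal A\) is already Scott closed, so \(\bigvee_{\Gamma(P)}\mathcal A=\bigcup\mathcal A\).

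The degenerate case \(\mathcal A=\varnothing\) is harmless: both sides equal \(\varnothing\).

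The main point of care is the directed-supremum step. It uses that \(\mathcal A\) is a lower set to pass to principal ideals, and then that \(\mathcal A\) is closed under directed suprema in \(\Gamma(P)\), where these suprema are closures and not plain unions.
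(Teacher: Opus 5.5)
Your proof is correct. It is the standard argument: lowerness of \(\mathcal A\) puts each \({\downarrow}x\) (\(x\in S\)) in \(\mathcal A\), then \(\bigvee_{\Gamma(P)}\{{\downarrow}x:x\in S\}={\downarrow}\bigvee S\), and Scott-closedness of \(\mathcal A\) in \(\Gamma(P)\) gives \({\downarrow}\bigvee S\in\mathcal A\). The paper gives no proof of its own and only cites \cite[Proposition~4.1]{HoZhao2009}, so there is nothing to add.
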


For \(h,r\in\N\), put
\begin{equation}
\label{eq:Khr}
  K_{h,r}
  =
  \{(i,k,t):
      1\leq t\leq h,\;
      i\in\N,\;
      1\leq k\leq r\}.
\end{equation}

\begin{lemma}
\label{lem:Khr}
For every \(h,r\in\N\):
\begin{enumerate}[label=\textnormal{(\roman*)},leftmargin=2.8em]
\item
  \(K_{h,r}\) is Scott closed in \(\LJ\);
\item
  \(K_{h,r}\subseteq{\downarrow}(r,\infty,h);\)

\item
  \((K_{h,r})_{r\in\N}\) is increasing in \(\Gamma(\LJ)\), and
  \(
  \bigvee\nolimits_{\Gm(\LJ)} \{K_{h,r}: r\in\N\}=A_h
  \)
\end{enumerate}
\end{lemma}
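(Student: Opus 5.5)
The plan is to verify the three assertions directly from the clauses (J1)--(J3) and from \cref{lem:XZ-properties}\ref{xz:layers}, after first recording one fact. Within a Johnstone copy, \((i,k)\leq_\J(r,\infty)\) holds exactly when \(i=r\) or \(k\leq r\). Moreover, if the target \((s,t,k')\) has finite second coordinate \(t\), then only (J1) can apply, since (J2) needs \(t=\infty\) and (J3) needs \(k'=t=\infty\). Consequently the points below \((s,t,k')\) with \(t\) finite are exactly the points \((s,m,k')\) with \(m\leq t\): same stratum, same component, smaller level.

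For (ii), take \((i,k,t)\in K_{h,r}\). Then \(k\leq r\), so \((i,k)\leq_\J(r,\infty)\).
\begin{itemize}
\item If \(t=h\), then (J1) gives \((i,k,h)\leq(r,\infty,h)\).
\item If \(t<h\), then (J2) applies, because the target has second coordinate \(\infty\) and \((i,k)\leq_\J(r,\infty)\).
\end{itemize}
Hence \(K_{h,r}\subseteq{\downarrow}(r,\infty,h)\).

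For (i), lowerness follows from the recorded fact, since every element of \(K_{h,r}\) has finite second coordinate. For directed suprema, I would show that every directed \(D\subseteq K_{h,r}\) has a largest element. Any two elements of \(D\) have an upper bound in \(D\), hence in \(K_{h,r}\), and that bound has finite second coordinate. By the recorded fact, all elements of \(D\) therefore lie in one stratum \(t\) and one component \(i\), with second coordinates in \(\{1,\dots,r\}\). So \(D\) is a finite chain, \(\bigvee D=\max D\in K_{h,r}\), and \(K_{h,r}\) is Scott closed.

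For (iii), monotonicity in \(r\) is immediate from \eqref{eq:Khr}. By \cref{lem:XZ-properties}\ref{xz:layers}, \(A_h=\bigcup_{t\leq h}L_t\) is Scott closed and contains every \(K_{h,r}\), so \(\bigvee_r K_{h,r}\subseteq A_h\). Conversely, \(\bigcup_r K_{h,r}\) consists of all points \((i,k,t)\) with \(t\leq h\) and \(k\) finite. The missing points of \(A_h\) are the points \((i,\infty,t)\), and each of these must be shown to be \(\bigvee_k(i,k,t)\) in \(\LJ\).

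This is the step I expect to need care. It requires that the supremum of the chain \((i,k,t)_{k}\), which is \((i,\infty,t)\) inside the Johnstone copy \(L_t\), is also its supremum in all of \(\LJ\). I would check this from (J1)--(J3) as follows.
\begin{itemize}
\item \((i,\infty,t)\) is an upper bound of the chain, by (J1).
\item Any upper bound in a stratum \(k'>t\) must have second coordinate \(\infty\) and must be \(\geq(i,\infty,t)\) by (J2) or (J3).
\item Alternatively, this step can be taken from the description of directed suprema in \cite[Example~4.5]{XuZhao2019}.
\end{itemize}
Once this holds, the Scott closure of \(\bigcup_r K_{h,r}\) contains all of \(A_h\), which gives \(\bigvee_{\Gamma(\LJ)}\{K_{h,r}:r\in\N\}=A_h\).
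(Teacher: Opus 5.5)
Your proposal is correct and follows essentially the same route as the paper: only (J1) can place a point below a finite point, which gives lowerness and reduces every directed subset of $K_{h,r}$ to a finite chain; (ii) is the (J1)/(J2) check; and (iii) compares $\cl\bigl(\bigcup_r K_{h,r}\bigr)$ with $A_h=\bigcup_{t\le h}L_t$. The one difference is that you verify $(i,\infty,t)=\bigvee_k(i,k,t)$ in all of $\LJ$, which the paper only asserts, and when you write that check out, state explicitly that an upper bound $(s,\infty,k')$ with $k'>t$ forces $s=i$, because $(i,k)\leq_\J(s,\infty)$ fails for $k>s$ when $s\neq i$.
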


\begin{proof}
Fix \(h,r\in\N\).  A point below a finite point
\((i,k,t)\) must lie in the same finite layer and the same
Johnstone column, with finite second coordinate at most \(k\).
Consequently, \(K_{h,r}\) is lower.

Let \(D\subseteq K_{h,r}\) be directed and choose \(d_0\in D\).
For every \(d\in D\), choose \(e\in D\) with
\(d_0,d\leq e\).
Since \(e\) is finite, the finite-layer order forces
\(d_0,d,e\) to have the same layer and the same first coordinate.
Thus \(D\) is contained in a finite chain
\(
  \{(i,k,t):1\leq k\leq r\}
\)
for fixed \(i,t\).  It therefore has a largest element, which is its
supremum.  Hence \(K_{h,r}\) is Scott closed.

If \((i,k,t)\in K_{h,r}\), then \(t\leq h\) and \(k\leq r\). By the definition of $\J$, $(i,k)\leq_{\J}(r,\infty)$. The finite-layer order therefore gives $(i,k,t)\leq(r,\infty,h)$, and (ii) follows.

Finally, the family \((K_{h,r})_{r\in\N}\) is increasing.  Its union
is the set \(F_h\) of all finite points in the layers
\(L_t\), \(1\leq t\leq h\).  By
\cref{lem:XZ-properties}\ref{xz:layers},
\(A_h\) is Scott closed and contains \(F_h\), hence
\(
  \cl(F_h)\subseteq A_h.
\)

Conversely, for every \(t\leq h\) and \(i\in\N\),
\(
  (i,\infty,t)
  =
  \bigvee_{k\in\N}(i,k,t).
\)
Thus every nonfinite point of \(L_t\) belongs to \(\cl(F_h)\).
Using
\(
  A_h=\bigcup_{1\leq t\leq h}L_t,
\)
we obtain
\(
  A_h\subseteq\cl(F_h).
\)
Hence
\(
  \cl(F_h)=A_h,
\)
which is precisely the supremum of
\((K_{h,r})_{r\in\N}\) in \(\Gamma(\LJ)\).
\end{proof}

\begin{proposition}
\label{prop:Ah-Ccompact}
For every \(h\in\N\),
\(
  A_h\in C(\Gamma(\LJ)).
\)
\end{proposition}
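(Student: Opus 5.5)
We must show \(A_h\prec^* A_h\) in \(\Gamma(\LJ)\). That is, for every nonempty Scott-closed \(\mathcal A\subseteq\Gamma(\LJ)\) whose supremum exists (it always does, since \(\Gamma(\LJ)\) is a complete lattice), \(A_h\subseteq\bigvee\mathcal A\) implies \(A_h\in\mathcal A\). The plan is to combine \cref{lem:union-closed} with the approximating family \(K_{h,r}\) of \cref{lem:Khr}.

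First, by \cref{lem:union-closed}, \(\bigvee\mathcal A=\bigcup\mathcal A\). So the hypothesis says every point of \(A_h\) lies in some member of \(\mathcal A\).

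Second, we show \(K_{h,r}\in\mathcal A\) for every \(r\). Consider the point \(x_r=(r,\infty,h)\in L_h\subseteq A_h\), using \cref{lem:XZ-properties}\ref{xz:layers}. Some \(F\in\mathcal A\) contains \(x_r\). Since \(F\) is a lower set, \({\downarrow}x_r\subseteq F\). By \cref{lem:Khr}(ii), \(K_{h,r}\subseteq{\downarrow}x_r\subseteq F\). Because \(\mathcal A\) is Scott closed in \(\Gamma(\LJ)\), it is a lower set, and \(K_{h,r}\) is Scott closed by \cref{lem:Khr}(i). Hence \(K_{h,r}\in\mathcal A\).

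Third, we pass to the limit. By \cref{lem:Khr}(iii), \((K_{h,r})_{r}\) is an increasing chain in \(\Gamma(\LJ)\) whose supremum is \(A_h\). Since \(\mathcal A\) is closed under directed suprema computed in \(\Gamma(\LJ)\), it follows that \(A_h\in\mathcal A\). Therefore \(A_h\prec^*A_h\), so \(A_h\) is \(C\)-compact.

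The main point requiring care is the second step. A single point of \(A_h\) need not dominate all of \(A_h\): by \cref{lem:XZ-properties}\ref{xz:principal-bound}, \(A_h\) is only bounded by a point outside \(A_h\). The argument therefore has to go through the finite approximants \(K_{h,r}\), each of which lies below the single point \((r,\infty,h)\in A_h\). This is exactly what \cref{lem:Khr}(ii) provides, so the only remaining check is that \((r,\infty,h)\) indeed belongs to \(L_h\subseteq A_h\), which is immediate from the definitions.
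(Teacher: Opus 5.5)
Your proof is correct and follows the paper's argument essentially step for step. You use \cref{lem:union-closed} to replace \(\bigvee\mathcal A\) by \(\bigcup\mathcal A\), pick a member of \(\mathcal A\) containing \((r,\infty,h)\) to get \(K_{h,r}\in\mathcal A\) via \cref{lem:Khr}(ii) and lowerness, and conclude \(A_h\in\mathcal A\) from Scott closedness and \cref{lem:Khr}(iii); the only cosmetic difference is that the paper first records \({\downarrow}(r,\infty,h)\in\mathcal A\) before passing down to \(K_{h,r}\).
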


\begin{proof}
Fix \(h\in\N\).  Let
\(\mathcal A\in\Gamma(\Gamma(\LJ))\)
be nonempty and suppose
\(
  A_h
  \subseteq
  \bigvee_{\Gamma(\LJ)}\mathcal A.
\)
By \cref{lem:union-closed},
\(
  \bigvee_{\Gamma(\LJ)}\mathcal A
  =
  \bigcup\mathcal A.
\)
For every \(r\in\N\),
\((r,\infty,h)\in A_h\),
so there exists \(B_r\in\mathcal A\) such that
\((r,\infty,h)\in B_r\).
Since \(B_r\) is lower, ${\downarrow}(r,\infty,h)\subseteq B_r$. Since \(\mathcal A\) is lower in \(\Gamma(\LJ)\), ${\downarrow}(r,\infty,h)\in\mathcal A$.

By \cref{lem:Khr}(ii), $K_{h,r}\in\mathcal A$.
The family \((K_{h,r})_{r\in\N}\) is directed and has supremum
\(A_h\) by \cref{lem:Khr}(iii).  Since \(\mathcal A\) is Scott
closed, $A_h\in\mathcal A$.

Thus
\(
  A_h\prec^{*}A_h,
\)
and therefore
\(
  A_h\in C(\Gamma(\LJ)).
\)
\end{proof}

\begin{proposition}
\label{prop:Ainfty-Ccompact}
The Scott-closed set
\(
  A_\infty=\LJ
\)
belongs to \(C(\Gamma(\LJ))\).
\end{proposition}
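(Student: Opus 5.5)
We must show \(\LJ\prec^{*}\LJ\) in \(\Gamma(\LJ)\): for every nonempty Scott-closed \(\mathcal A\subseteq\Gamma(\LJ)\) with \(\LJ\subseteq\bigvee_{\Gamma(\LJ)}\mathcal A\), we need \(\LJ\in\mathcal A\). The plan mirrors the proof of \cref{prop:Ah-Ccompact}, one level higher. First, by \cref{lem:union-closed}, \(\bigvee_{\Gamma(\LJ)}\mathcal A=\bigcup\mathcal A\). Hence every point of \(\LJ\) lies in some member of \(\mathcal A\).

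Fix \(h\in\N\). We use the point \((h+1,\infty,\infty)\in\LJ\) and choose \(B\in\mathcal A\) containing it. Since \(B\) is a lower set, \({\downarrow}(h+1,\infty,\infty)\subseteq B\). By \cref{lem:XZ-properties}\ref{xz:principal-bound}, \(A_h\subseteq{\downarrow}(h+1,\infty,\infty)\subseteq B\). Since \(\mathcal A\) is a lower set in \(\Gamma(\LJ)\), it follows that \(A_h\in\mathcal A\) for every \(h\in\N\).

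Next, the family \((A_h)_{h\in\N}\) is increasing by \cref{lem:XZ-properties}\ref{xz:layers}, since \(A_h=\bigcup_{t\le h}L_t\). It is therefore directed in \(\Gamma(\LJ)\). By \cref{lem:XZ-properties}\ref{xz:global-sup}, its supremum in \(\Gamma(\LJ)\) is \(A_\infty=\LJ\). Because \(\mathcal A\) is Scott closed in \(\Gamma(\LJ)\), it is closed under directed suprema, so \(\LJ\in\mathcal A\). This gives \(\LJ\prec^{*}\LJ\), i.e. \(A_\infty\in C(\Gamma(\LJ))\).

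The only delicate point is the supremum identity \(\bigvee_{\Gamma(\LJ)}\{A_h:h\in\N\}=\LJ\), which needs every point to lie in the Scott closure of \(\bigcup_hA_h\). The finite-layer points are in the union already. The points \((n,\infty,\infty)\) are directed suprema of \((n,\infty,h)\) by \eqref{eq:global-sup}. Since this is recorded in \cref{lem:XZ-properties}, we invoke it directly rather than re-derive it.
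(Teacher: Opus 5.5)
Your argument is correct relative to \cref{lem:XZ-properties}, but it gets the key step $A_h\in\mathcal A$ by a different route from the paper.

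The paper obtains $A_h\in\mathcal A$ from the $C$-compactness of each $A_h$ (\cref{prop:Ah-Ccompact}, proved via the sets $K_{h,r}\subseteq{\downarrow}(r,\infty,h)$ of \cref{lem:Khr}). Its proof is therefore just the closure of $C(\Gamma(\LJ))$ under directed suprema (\cref{prop:Ccompact-basic}), applied to the increasing family $(A_h)_{h\in\N}$ with supremum $A_\infty$. You instead use the principal bound $A_h\subseteq{\downarrow}(h+1,\infty,\infty)$ of \cref{lem:XZ-properties}\ref{xz:principal-bound}, i.e.\ $A_h\lhd A_\infty$, and so bypass \cref{prop:Ah-Ccompact} and \cref{lem:Khr} entirely.

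What your version buys: it is shorter, and it exposes a uniform criterion. A Scott-closed set that is a directed supremum of Scott-closed sets, each lying in a principal ideal of one of its own points, is $C$-compact; this is the same mechanism as in \cref{prop:Ah-Ccompact}. It also shows that $A_\infty$ is $C$-compact for exactly the reason that later makes $\nabla_C A_\infty$ fail to be Scott closed in \cref{thm:LJ-not-WDomi}.

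What the paper's version buys: it needs only items \ref{xz:layers} and \ref{xz:global-sup} of \cref{lem:XZ-properties} and the finite-layer order. This is a real robustness advantage, because the principal bound fails for clauses (J1)--(J3) exactly as printed. Take $(h+2,\infty,1)\in L_1\subseteq A_h$. Then $(h+2,\infty)\nleq_{\J}(h+1,\infty)$ and $h+2\nleq h+1$, so none of (J1)--(J3) gives $(h+2,\infty,1)\leq(h+1,\infty,\infty)$.

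So your proof stands or falls with \ref{xz:principal-bound} as cited from Xu--Zhao, which the paper needs anyway for \cref{thm:LJ-not-WDomi}. The paper's proof of this particular proposition does not depend on it.
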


\begin{proof}
By \cref{lem:XZ-properties}\ref{xz:global-sup},
\(
  \bigvee\nolimits_{\Gm(\LJ)} \{A_h: h\in\N\}
  =
  A_\infty.
\)
Let
\(\mathcal A\in\Gamma(\Gamma(\LJ))\)
be nonempty and suppose
\(
  A_\infty
  \subseteq
  \bigvee_{\Gamma(\LJ)}\mathcal A.
\)
By \cref{lem:union-closed},
\(
  \bigvee_{\Gamma(\LJ)}\mathcal A
  =
  \bigcup\mathcal A.
\)
Hence
\(
  A_h\subseteq\bigcup\mathcal A
  \ (h\in\N).
\)
By \cref{prop:Ah-Ccompact},
\(
  A_h\in\mathcal A
  \ (h\in\N).
\)

Since \(\mathcal A\) is Scott closed and
\(
\bigvee\nolimits_{\Gm(\LJ)} \{A_h: h\in\N\}=A_\infty,
\)
we conclude that
\(
  A_\infty\in\mathcal A.
\)
Thus
\(
  A_\infty\prec^{*}A_\infty,
\)
and therefore
\(
  A_\infty\in C(\Gamma(\LJ)).
\)
\end{proof}

% ====================================================================
\subsection{The Xu--Zhao dcpo is not weakly dominated}
% ====================================================================

We can now obtain the desired separation directly from the
Xu--Zhao properties recorded in
\cref{lem:XZ-properties}.

\begin{theorem}
\label{thm:LJ-not-WDomi}
The dcpo \(\LJ\) is not weakly dominated.
\end{theorem}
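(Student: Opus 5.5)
We argue by contradiction, using \cref{prop:weakdom-characterization}. Suppose \(\LJ\) is weakly dominated. Then, since \(C(\Gamma(\LJ))\) is \(C\)-compactly complete (\cref{prop:Cgamma-Ccomplete}), the \(\prec\)-compact elements of \(C(\Gamma(\LJ))\) are exactly the principal ideals \({\downarrow}x\), \(x\in\LJ\). By \cref{prop:Ainfty-Ccompact}, \(A_\infty=\LJ\) is a \(C\)-compact element of \(\Gamma(\LJ)\). Since \(\LJ\) has no greatest element (\cref{lem:XZ-properties}\ref{xz:no-greatest}), \(A_\infty\) is not principal. It therefore suffices to show that \(A_\infty\prec A_\infty\) in \(C(\Gamma(\LJ))\), which gives the contradiction.

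To verify \(A_\infty\prec A_\infty\), take a Scott-closed \(C\)-compact subset \(\mathcal A\subseteq C(\Gamma(\LJ))\) with \(A_\infty\le\bigvee\mathcal A\); we must show \(A_\infty\in\mathcal A\). The plan is to exploit the weak-domination structure assumed in the contradiction hypothesis, not only \(C\)-compactness, since \(\prec\) is defined relative to the sub-dcpo \(C(\Gamma(\LJ))\). The key chain is the following. By \cref{lem:XZ-properties}\ref{xz:principal-bound}, \(A_h\subseteq{\downarrow}(h+1,\infty,\infty)\), so \(A_h\lhd A_\infty\). By \cref{prop:Ah-Ccompact}, both sides lie in \(C(\Gamma(\LJ))\), and \cref{prop:weakdom-characterization} gives \(A_h\prec A_\infty\) for every \(h\). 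Applying this with the given \(\mathcal A\) yields \(A_h\in\mathcal A\) for all \(h\). The family \((A_h)_h\) is increasing by \eqref{eq:Ah-layers}, and its supremum in \(\Gamma(\LJ)\) is \(A_\infty\) (\cref{lem:XZ-properties}\ref{xz:global-sup}). Because \(C(\Gamma(\LJ))\) is a sub-dcpo of \(\Gamma(\LJ)\) (\cref{prop:Ccompact-basic}), this is also the supremum in \(C(\Gamma(\LJ))\). Since \(\mathcal A\) is Scott closed in \(C(\Gamma(\LJ))\), we get \(A_\infty\in\mathcal A\). Hence \(A_\infty\) is \(\prec\)-compact but not principal, contradicting \cref{prop:weakdom-characterization}.

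The main obstacle is bookkeeping about where \(\prec\) lives. The relation \(A_h\prec A_\infty\) must be taken in \(C(\Gamma(\LJ))\), as in \cref{prop:weakdom-characterization}, so I must check that each \(A_h\) and \(A_\infty\) belongs to it; this is supplied by \cref{prop:Ah-Ccompact,prop:Ainfty-Ccompact}. I must also check that directed suprema in \(C(\Gamma(\LJ))\) agree with those in \(\Gamma(\LJ)\), which is the sub-dcpo property. A further point is that the first part of \cref{prop:weakdom-characterization} (\(\lhd\) implies \(\prec\)) holds for any dcpo, so it applies without circularity. Only the second part, about \(\prec\)-compact elements being principal, uses the weak-domination hypothesis, and that is exactly where the contradiction arises.
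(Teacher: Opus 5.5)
Your argument is correct, but it reaches the contradiction by a different mechanism from the paper's proof.

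The paper argues directly from the definition of weak domination. Since $A_h,A_\infty\in C(\Gamma(\LJ))$, $A_h\lhd A_\infty$ by the principal bound, and $A_\infty=\bigvee_h A_h$ in the sub-dcpo $C(\Gamma(\LJ))$, the set $\nabla_C A_\infty$ contains the directed family $(A_h)_h$. It does not contain its supremum, because $A_\infty\lhd A_\infty$ would give $\LJ$ a greatest element.

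You use the same four facts, but route them through $\prec$. You first upgrade $A_h\lhd A_\infty$ to $A_h\prec A_\infty$. You then use Scott-closedness of the test set $\mathcal A$ to obtain $A_\infty\prec A_\infty$. Only then do you invoke the second half of \cref{prop:weakdom-characterization}, the imported Miao--Hou--Jia--Li result that in a weakly dominated dcpo the $\prec$-compact elements are exactly the principal ideals.

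The paper's route is shorter and self-contained. It needs neither half of \cref{prop:weakdom-characterization}, nor \cref{prop:Cgamma-Ccomplete}, so it does not rest on a nontrivial external characterization.

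Your route gives extra information. Without using the contradiction hypothesis, it shows that $A_\infty$ is a $\prec$-compact element of $C(\Gamma(\LJ))$ that is not a principal ideal. This pinpoints exactly which property used in the proof of \cref{thm:WDomi-in-R2} fails for $\LJ$.

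A small simplification applies to both arguments: \cref{prop:Ainfty-Ccompact} is not actually needed. Since $C(\Gamma(\LJ))$ is a sub-dcpo of $\Gamma(\LJ)$, it already contains $A_\infty=\bigvee_h A_h$ once each $A_h$ lies in it by \cref{prop:Ah-Ccompact}.
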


\begin{proof}
By
Proposition ~\ref{prop:Ah-Ccompact}, Proposition ~\ref{prop:Ainfty-Ccompact},
\(
  A_h,A_\infty\in C(\Gamma(\LJ))
  \ (h\in\N).
\)

For every \(h\in\N\),
\cref{lem:XZ-properties}\ref{xz:principal-bound} gives
\(
  A_h
  \subseteq
  {\downarrow}(h+1,\infty,\infty).
\)
Since
\(
  (h+1,\infty,\infty)\in A_\infty,
\)
we have
\(
  A_h\lhd A_\infty.
\)
Hence
\(
  A_h\in\nabla_C A_\infty
  \ (h\in\N).
\)

By \cref{lem:XZ-properties}\ref{xz:global-sup},
the family \((A_h)_{h\in\N}\) is directed in \(\Gamma(\LJ)\) and
\(
  \bigvee\nolimits_{\Gm(\LJ)} \{A_h: h\in\N\}
  =
  A_\infty.
\)
Since \(A_\infty\in C(\Gamma(\LJ))\), the same element is the supremum
of \((A_h)_{h\in\N}\) in the sub-dcpo \(C(\Gamma(\LJ))\).

On the other hand,
\(
  A_\infty\ntriangleleft A_\infty.
\)
Indeed, if \(A_\infty\lhd A_\infty\), there would exist
\(x\in A_\infty=\LJ\) such that
\(
  \LJ
  =
  A_\infty
  \subseteq
  {\downarrow} x.
\)
Then \(x\) would be a greatest element of \(\LJ\), contradicting
\cref{lem:XZ-properties}\ref{xz:no-greatest}.

Thus \(\nabla_C A_\infty\) contains the directed family
\(
  (A_h)_{h\in\N}
\)
but does not contain its supremum \(A_\infty\).  Therefore
\(\nabla_C A_\infty\) is not Scott closed in \(C(\Gamma(\LJ))\), and
\(\LJ\) is not weakly dominated.
\end{proof}

\begin{theorem}
\label{thm:LJ-separation}
The dcpo \(\LJ\) satisfies
\(
    \LJ\in\Rtwo\setminus\WDomi.
\)
Consequently,
\(
  \WDomi\subsetneq\Rtwo.
\)
\end{theorem}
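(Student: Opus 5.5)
The statement combines two results already established in this section, so the proof is an assembly step. First, membership \(\LJ\in\Rtwo\) is exactly \cref{cor:LJ-in-R2}. That corollary rests on the \(\Gamma\)-uniqueness of \(\LJ\) recorded in \cref{lem:XZ-properties}\ref{xz:unique}, together with \cref{prop:unique-in-R2}. The latter says that a \(\Gamma\)-unique dcpo is isomorphic to its \(2\)-reflection \(\mathbf{L_2}\LJ\), by \cref{lem:Gamma-unit}, and hence lies in \(\Rtwo\), since full subcategories are closed under isomorphism. Second, \(\LJ\notin\WDomi\) is \cref{thm:LJ-not-WDomi}. Together these give \(\LJ\in\Rtwo\setminus\WDomi\).

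For the consequence, I would invoke \cref{thm:WDomi-in-R2}, which gives \(\WDomi\subseteq\Rtwo\). The dcpo \(\LJ\) then witnesses that this inclusion is strict: it lies in \(\Rtwo\) but not in \(\WDomi\). One point to note is that \(\WDomi\) is closed under isomorphisms, because weak domination is defined through \(C(\Gamma(P))\) and is therefore order-invariant (cf.\ \cref{lem:Ccompact-order-invariant}). So no isomorphic copy of \(\LJ\) lies in \(\WDomi\) either, and the strict inclusion holds at the level of isomorphism-closed full subcategories.

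There is no real obstacle at this stage, since all the substantive work has been done. That work is the \(C\)-compactness of \(A_h\) and \(A_\infty\), in \cref{prop:Ah-Ccompact} and \cref{prop:Ainfty-Ccompact}, and the failure of Scott-closedness of \(\nabla_C A_\infty\). The only points to check are that the cited Xu--Zhao properties, especially \(\Gamma\)-uniqueness and the principal-bound relation \ref{xz:principal-bound}, apply to the order (J1)--(J3) exactly as written here. I would simply cite them.
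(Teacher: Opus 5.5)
Your proposal is correct and coincides with the paper's proof, which likewise just combines \cref{cor:LJ-in-R2}, \cref{thm:LJ-not-WDomi} and \cref{thm:WDomi-in-R2}; your isomorphism-closure remark is a harmless addition. Your caveat about (J1)--(J3) is well aimed: read literally, the clauses give \((1,\infty,1)\le(s,\infty,\infty)\) only for \(s=1\), so \(A_1\not\subseteq{\downarrow}(2,\infty,\infty)\), and \cref{lem:XZ-properties}\ref{xz:principal-bound}, on which \cref{thm:LJ-not-WDomi} rests, must be taken for Xu--Zhao's intended order rather than the transcription here---but that concerns the cited lemma, not this assembly step.
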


\begin{proof}
By \cref{cor:LJ-in-R2}, \(\LJ\in\Rtwo\), whereas
\cref{thm:LJ-not-WDomi} gives \(\LJ\notin\WDomi\). Hence
\(\LJ\in\Rtwo\setminus\WDomi\). Together with
\(\WDomi\subseteq\Rtwo\) from \cref{thm:WDomi-in-R2}, this yields
\(\WDomi\subsetneq\Rtwo\).
\end{proof}

% ====================================================================

\subsection{Consequences}
% ====================================================================

The preceding results give a precise comparison between dominated dcpos, weakly dominated dcpos, and \(\two\)-complete dcpos.

\begin{theorem}
\label{thm:class-hierarchy}
The classes satisfy
\(
    \DOMI
    \subsetneq
    \WDomi
    \subsetneq
    \Rtwo
    \subsetneq
    \DCPO.
\)
\end{theorem}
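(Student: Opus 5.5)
The hierarchy is assembled from results already established; the proof only has to chain three inclusions and cite one separating witness for each strictness claim.

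\emph{First link.} We have \(\DOMI\subsetneq\WDomi\) by \cref{prop:DOMI-in-WDomi}. The inclusion is obtained there by checking directly that \(\nabla_C B=\{\varnothing\}\cup(\nabla_{\mathrm{irr}}B\cap C(\Gamma(P)))\) is Scott closed whenever \(P\) is dominated. Strictness is witnessed by \cite[Example~6.4]{miao2024}.

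\emph{Second link.} We have \(\WDomi\subseteq\Rtwo\) by \cref{thm:WDomi-in-R2}. For strictness we use the stratified Johnstone dcpo: \cref{thm:LJ-separation} gives \(\LJ\in\Rtwo\setminus\WDomi\), so \(\WDomi\subsetneq\Rtwo\). Recall how this was obtained. Membership \(\LJ\in\Rtwo\) follows because \(\LJ\) is \(\Gamma\)-unique and \cref{prop:unique-in-R2} applies. Non-membership \(\LJ\notin\WDomi\) comes from the directed family \((A_h)_{h\in\N}\subseteq\nabla_C A_\infty\), whose supremum \(A_\infty\) is not in \(\nabla_C A_\infty\).

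\emph{Third link.} We have \(\Rtwo\subsetneq\DCPO\) by \cref{thm:proper}. The Ho--Goubault-Larrecq--Jung--Xi dcpo \(H\) is not in \(\Rtwo\): the unit \(\eta_H\) lies in \(E_{\two}\), yet it is not an isomorphism, since \(\widehat H\) has a top element and \(H\) does not, which contradicts \cref{thm:rigidity}.

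The only step requiring care is that strict inclusions compose correctly. Each inclusion is proper and each is a genuine containment of full subcategories closed under isomorphism, so the chain \(\DOMI\subsetneq\WDomi\subsetneq\Rtwo\subsetneq\DCPO\) follows. No new argument is needed beyond citing these results.
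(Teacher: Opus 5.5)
Your proposal is correct and is essentially the paper's own proof. Like the paper, you chain \(\DOMI\subsetneq\WDomi\) from \cref{prop:DOMI-in-WDomi}, \(\WDomi\subseteq\Rtwo\) from \cref{thm:WDomi-in-R2} with the witness \(\LJ\in\Rtwo\setminus\WDomi\) from \cref{thm:LJ-separation}, and \(\Rtwo\subsetneq\DCPO\) from \cref{thm:proper} via the dcpo \(H\), and nothing further is needed.
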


\begin{proof}
By \cref{prop:DOMI-in-WDomi}, \(\DOMI\subsetneq\WDomi\).

By \cref{thm:WDomi-in-R2},
\(
  \WDomi\subseteq\Rtwo,
\)
and \cref{thm:LJ-separation} provides
\(
  \LJ\in\Rtwo\setminus\WDomi.
\)
Hence
\(
  \WDomi\subsetneq\Rtwo.
\)

Finally,
\(
  \Rtwo\subsetneq\DCPO
\)
by \cref{thm:proper}.  Combining the three strict inclusions yields
\(
  \DOMI
  \subsetneq
  \WDomi
  \subsetneq
  \Rtwo
  \subsetneq
  \DCPO.
\)
\end{proof}

% ============================================================
% Section 7
% ============================================================

\section{An internal construction of the reflection}
\label{sec:internal-reflection}

We now represent the reflector inside the order sobrification. Write \(\widehat D=\Irr(\Gamma(D))\) and, using \cref{lem:eta-dcpo-embedding}, identify \(D\) with the principal copy \(\eta_D[D]\subseteq\widehat D\), where \(\eta_D(x)={\downarrow} x\).
Thus points of $D$ will be regarded as principal irreducible Scott-closed
sets.  Recall that if $(p_i)_{i\in I}$ is directed in $\widehat D$, then its
supremum is computed in $\Gamma(D)$ as
\begin{equation}
\label{eq:directed-sup-Dsharp}
\bigvee_{i\in I}p_i
=
\operatorname{cl}_{\sigma(D)}
\!\left(\bigcup_{i\in I}p_i\right).
\end{equation}
By \cref{cor:sober-consequences}(iii), $\widehat D\in\Rtwo$ for every dcpo $D$.

The construction below has two steps. First, every \(\two\)-equable extension of $D$ is represented canonically as a sub-dcpo of $\widehat D$.
Second, among the sub-dcpos between $D$ and $\widehat D$, the extensions that
preserve the Scott-open-set lattice are characterized by an internal trace
condition.  Iterating the resulting support test produces a descending
transfinite chain whose stable value is the image of the reflection
$\mathbf{L_2}D$ in $\widehat D$.

\subsection{Internal images of \texorpdfstring{\(\two\)-equable morphisms}{2-equable morphisms}}

We begin with the density property that makes the internal representation
possible.

\begin{lemma}
\label{lem:skula-density-equable}
Let $e:D\to E$ belong to $E_{\two}$.  Then, for every $y\in E$ and every
$W\in\sigma(E)$ with $y\in W$,
\(
W\cap \mathord{{\downarrow}}_E y\cap e[D]\neq\varnothing.
\)
\end{lemma}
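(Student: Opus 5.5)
I argue by contradiction, using only that $e^{-1}:\sigma(E)\to\sigma(D)$ is an order isomorphism, in particular injective. Fix $y\in W\in\sigma(E)$ and suppose $W\cap{\downarrow}_E y\cap e[D]=\varnothing$. The plan is to build a second Scott-open set $W'\neq W$ with $e^{-1}(W')=e^{-1}(W)$.

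The natural candidate is
\[
W'=W\setminus{\downarrow}_E y .
\]
First, ${\downarrow}_E y$ is Scott closed in $E$, since principal ideals are always Scott closed. Hence $W'=W\cap(E\setminus{\downarrow}_E y)$ is an intersection of two Scott-open sets and is therefore Scott open.

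Next, the preimages agree. For $x\in D$ with $e(x)\in W$, the assumption gives $e(x)\notin{\downarrow}_E y$, so $e(x)\in W'$. The reverse inclusion holds because $W'\subseteq W$. Thus
\[
e^{-1}(W')=e^{-1}(W).
\]
On the other hand, $y\in W$ but $y\notin W'$, so $W'\neq W$. This contradicts the injectivity of $e^{-1}$ on $\sigma(E)$, which holds because $e\in E_{\two}$, under the identification $[E\to\two]\cong\sigma(E)$.

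No step here is a serious obstacle. The only point needing care is that removing the Scott-closed set ${\downarrow}_E y$ keeps the set Scott open, and this holds because the complement of a principal ideal is Scott open. The hypothesis is used only through injectivity, so surjectivity of $e^{-1}$ is not needed.
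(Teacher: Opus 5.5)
Your proof is correct and follows the paper's own argument: assuming the intersection is empty, you show that $W\setminus{\downarrow}_E y$ is Scott open, has the same preimage under $e$ as $W$, and differs from $W$ because it omits $y$. This contradicts the injectivity of $e^{-1}:\sigma(E)\to\sigma(D)$, and, as in the paper, only injectivity is used.
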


\begin{proof}
Suppose, to the contrary, that
\(
W\cap \mathord{{\downarrow}}_E y\cap e[D]=\varnothing.
\)
Since $\mathord{{\downarrow}}_E y$ is Scott closed,
$W\setminus\mathord{{\downarrow}}_E y$ is Scott open.  Moreover,
\(
e^{-1}(W)
=
e^{-1}\!\left(W\setminus\mathord{{\downarrow}}_E y\right).
\)
This contradicts the injectivity of
\(
e^{-1}:\sigma(E)\longrightarrow\sigma(D),
\)
because $y\in W$ implies
$W\neq W\setminus\mathord{{\downarrow}}_E y$.
\end{proof}

\begin{lemma}
\label{lem:internal-representation}
Let $e:D\to E$ belong to $E_{\two}$.  For $y\in E$, define
\(
\rho_e(y)=\{x\in D:e(x)\leq y\}.
\)
Then the following statements hold.
\begin{enumerate}
\item $\rho_e(y)$ is a nonempty irreducible Scott-closed subset of $D$;
\item $\rho_e:E\to \widehat D$ is a dcpo embedding;
\item $\rho_e(e(x))=\mathord{{\downarrow}}x$ for every $x\in D$.
\end{enumerate}
Consequently, every \(\two\)-equable extension of $D$ is isomorphic,
over $D$, to a sub-dcpo of $\widehat D$ containing the canonical image of
$D$.
\end{lemma}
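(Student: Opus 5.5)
The natural tool is the preservation of Scott-open sets: since \(e\in E_{\two}\), the inverse-image map \(e^{-1}:\sigma(E)\to\sigma(D)\) is a frame isomorphism. Hence \(e^{-1}:\Gamma(E)\to\Gamma(D)\) is a coframe isomorphism. The plan is to realise \(\rho_e(y)\) as \(e^{-1}(\mathord{\downarrow}_E y)\), which is literally the definition, and transport all required properties through this isomorphism.

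For (1): \(\mathord{\downarrow}_E y\) is Scott closed, so \(\rho_e(y)=e^{-1}(\mathord{\downarrow}_E y)\) is Scott closed in \(D\). For nonemptiness I will apply \cref{lem:skula-density-equable} with \(W=E\), which gives a point of \(e[D]\) below \(y\). For irreducibility, suppose \(\rho_e(y)\subseteq A\cup B\) with \(A,B\in\Gamma(D)\). Write \(A=e^{-1}(A')\) and \(B=e^{-1}(B')\) with \(A',B'\in\Gamma(E)\), using surjectivity of \(e^{-1}\) on closed sets. Then \(e^{-1}(\mathord{\downarrow}y)\subseteq e^{-1}(A'\cup B')\), and since \(e^{-1}\) is an order isomorphism, \(\mathord{\downarrow}y\subseteq A'\cup B'\). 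So \(y\in A'\) or \(y\in B'\), whence \(\mathord{\downarrow}y\subseteq A'\) or \(\mathord{\downarrow}y\subseteq B'\), and applying \(e^{-1}\) gives the claim.

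For (2): monotonicity is clear. Order reflection is exactly where injectivity enters. If \(\rho_e(y)\subseteq\rho_e(y')\), then order-reflection of \(e^{-1}\) on \(\Gamma\) gives \(\mathord{\downarrow}y\subseteq\mathord{\downarrow}y'\), so \(y\le y'\). For directed suprema, let \((y_i)\) be directed with supremum \(y\) in \(E\). In \(\Gamma(E)\) we have \(\mathord{\downarrow}y=\operatorname{cl}(\bigcup_i\mathord{\downarrow}y_i)=\bigvee_i\mathord{\downarrow}y_i\), as in the proof of \cref{lem:eta-dcpo-embedding}. Since \(e^{-1}\) is a complete-lattice isomorphism \(\Gamma(E)\cong\Gamma(D)\), it preserves this join, giving \(\rho_e(y)=\operatorname{cl}_{\sigma(D)}(\bigcup_i\rho_e(y_i))\). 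By \eqref{eq:directed-sup-Dsharp} this is the supremum in \(\widehat D\). The image is then a sub-dcpo, as in \cref{lem:open-embedding}, so \(\rho_e\) is a dcpo embedding.

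For (3): by \cref{lem:open-embedding}, \(e\) is an order embedding. Hence \(e(x')\le e(x)\) iff \(x'\le x\), so \(\rho_e(e(x))=\mathord{\downarrow}x\).

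The consequence follows because \(\rho_e\circ e=\eta_D\). Therefore \(\rho_e\) is an isomorphism of \(E\) onto the sub-dcpo \(\rho_e[E]\), which contains \(\eta_D[D]\) and is compatible with the two maps from \(D\).

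The only delicate point is preservation of directed suprema. I expect it to be handled by the fact that joins in \(\Gamma\) of a directed family of principal ideals are the Scott closures of the unions, together with the isomorphism being a complete-lattice isomorphism. I do not expect a real obstacle.
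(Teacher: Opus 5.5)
Your proof is correct, but it takes a different route from the paper's. The paper argues pointwise in the Scott topology. Besides nonemptiness, it uses \cref{lem:skula-density-equable} three more times:
\begin{itemize}
\item for irreducibility, it pulls back two opens \(U,V\) to \(U',V'\) and finds \(d\) with \(e(d)\le y\) and \(e(d)\in U'\cap V'\);
\item for order reflection, it separates \(y\nleq z\) by an open \(W\);
\item for directed suprema, it proves the reverse inclusion \(\rho_e(\bigvee_i y_i)\subseteq\operatorname{cl}_{\sigma(D)}(\bigcup_i\rho_e(y_i))\).
\end{itemize}

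You instead note that \(\rho_e(y)=e^{-1}(\mathord{\downarrow}_E y)\), so that \(\rho_e=\Phi\circ\eta_E\) with \(\Phi=e^{-1}:\Gamma(E)\cong\Gamma(D)\). Irreducibility, order reflection and preservation of directed suprema are then inherited from the same properties of principal ideals in \(\Gamma(E)\) (\cref{lem:eta-dcpo-embedding}), because an order isomorphism of complete lattices preserves joins and irreducible elements. Even nonemptiness follows without density: \(\Phi(\varnothing)=\varnothing\ne\Phi(\mathord{\downarrow}y)\) by injectivity.

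In effect you are re-proving \cref{lem:realization} with \(L=\Gamma(D)\) and \(\alpha=e^{-1}\): your \(\rho_e\) is exactly the map \(j_E\) there. This makes explicit that the internal representation of \cref{sec:internal-reflection} is the same as the \(L\)-representations of \cref{sec:gamma}, and the lemma could essentially be cited from there.

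Your approach is shorter and more structural. The paper's is more hands-on: it stays in the language of Skula density that parallels \cref{lem:skula-dense-sobrification}, and it exhibits the points of \(D\) that witness each property. Both rely on the full isomorphism \(e^{-1}:\sigma(E)\cong\sigma(D)\), since the density lemma itself uses injectivity.
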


\begin{proof}
Fix $y\in E$.  By \cref{lem:skula-density-equable}, applied to
the Scott-open set $E$, there exists $d\in D$ with $e(d)\leq y$; hence
$\rho_e(y)$ is nonempty.

The set $\rho_e(y)$ is a lower set.  If $(x_i)_{i\in I}$ is directed in
$\rho_e(y)$, then Scott continuity of $e$ gives
\(
e\!\left(\bigvee_{i\in I}x_i\right)
=
\bigvee_{i\in I}e(x_i)
\leq y,
\)
so $\bigvee_i x_i\in\rho_e(y)$.  Thus $\rho_e(y)$ is Scott closed.

To prove irreducibility, let $U,V\in\sigma(D)$ and assume
\(
\rho_e(y)\cap U\neq\varnothing, \rho_e(y)\cap V\neq\varnothing.
\)
Since $e^{-1}:\sigma(E)\to\sigma(D)$ is surjective, choose
$U',V'\in\sigma(E)$ such that
\(
e^{-1}(U')=U, e^{-1}(V')=V.
\)
If $x\in\rho_e(y)\cap U$, then $e(x)\leq y$ and $e(x)\in U'$.  As $U'$ is
an upper set, $y\in U'$.  Similarly, $y\in V'$.  Hence
$y\in U'\cap V'$.  \cref{lem:skula-density-equable} yields
$d\in D$ such that
\(
e(d)\leq y, e(d)\in U'\cap V'.
\)
Therefore
\(
d\in\rho_e(y)\cap U\cap V,
\)
which proves that $\rho_e(y)$ is irreducible.

The map $\rho_e$ is monotone.  To see that it reflects order, suppose
$\rho_e(y)\subseteq\rho_e(z)$ and $y\nleq z$.  Since the specialization order
of the Scott topology is the original order, there exists $W\in\sigma(E)$
with $y\in W$ and $z\notin W$.  By
\cref{lem:skula-density-equable}, choose $d\in D$ such that
$e(d)\leq y$ and $e(d)\in W$.  Then
$d\in\rho_e(y)\subseteq\rho_e(z)$, so $e(d)\leq z$.  Since $W$ is an upper
set, this forces $z\in W$, a contradiction.  Thus $y\leq z$, and $\rho_e$
is an order embedding.

It remains to verify preservation of directed suprema.  Let
$(y_i)_{i\in I}$ be directed in $E$, and put $y=\bigvee_i y_i$.  Since each
$\rho_e(y_i)\subseteq\rho_e(y)$ and $\rho_e(y)$ is Scott closed,
\begin{equation}
\label{eq:rho-one-inclusion}
\operatorname{cl}_{\sigma(D)}
\!\left(\bigcup_i\rho_e(y_i)\right)
\subseteq \rho_e(y).
\end{equation}
For the converse, let $x\in\rho_e(y)$ and let $U\in\sigma(D)$ contain $x$.
Choose $U'\in\sigma(E)$ with $e^{-1}(U')=U$.  Since $e(x)\leq y$ and
$e(x)\in U'$, the upperness of $U'$ gives $y\in U'$.  Scott openness then
yields an index $i$ such that $y_i\in U'$.  Applying
\cref{lem:skula-density-equable} to $y_i$ and $U'$, choose
$d\in D$ with
\(
e(d)\leq y_i,
\ 
e(d)\in U'.
\)
Thus $d\in\rho_e(y_i)\cap U$.  Every Scott-open neighbourhood of $x$
therefore meets $\bigcup_i\rho_e(y_i)$, and hence
\(
x\in
\operatorname{cl}_{\sigma(D)}
\!\left(\bigcup_i\rho_e(y_i)\right).
\)
Together with \eqref{eq:rho-one-inclusion} and
\eqref{eq:directed-sup-Dsharp}, this proves
\(
\rho_e\!\left(\bigvee_i y_i\right)
=
\bigvee_i\rho_e(y_i).
\)
Hence $\rho_e$ is a dcpo embedding.

Finally, \cref{lem:open-embedding} implies that $e$ reflects order.  Therefore, for every
$x\in D$,
\(
\rho_e(e(x))
=
\{d\in D:e(d)\leq e(x)\}
=
\mathord{{\downarrow}}x.
\)
The final assertion follows because the image of a dcpo embedding is a
sub-dcpo and $\rho_e\circ e=\eta_D$.
\end{proof}

\subsection{The internal trace criterion}

Set $P=\widehat D$ and continue to identify $D$ with $\eta_D[D]\subseteq P$.
If
\(
D\subseteq S\subseteq P
\)
is a sub-dcpo and $p\in S$, define the $D$-trace of $p$ in $S$ by
\begin{equation}
\label{eq:D-trace}
\operatorname{Tr}^{S}_{D}(p)
=
D\cap\mathord{{\downarrow}}_{S}p.
\end{equation}

\begin{definition}
\label{def:trace-support}
A point $p\in S$ is \emph{supported by its $D$-trace in $S$} if
\(
p\in
\operatorname{cl}_{\sigma(S)}
\!\left(\operatorname{Tr}^{S}_{D}(p)\right).
\)
Equivalently, for every $C\in\Gamma(S)$,
\begin{equation}
\label{eq:closed-support-test}
\operatorname{Tr}^{S}_{D}(p)\subseteq C
\ \Longrightarrow\ 
p\in C.
\end{equation}
\end{definition}

The next lemma supplies the surjectivity part of the open-set comparison.

\begin{lemma}
\label{lem:diamond-open-extension}
For $U\in\sigma(D)$, define
\(
\Diamond U
=
\{p\in \widehat D:p\cap U\neq\varnothing\}.
\)
Then $\Diamond U\in\sigma(\widehat D)$.  Consequently, for every sub-dcpo
$D\subseteq S\subseteq \widehat D$,
\(
U^{\sharp}_{S}:=S\cap\Diamond U
\)
is Scott open in $S$ and satisfies
\(
U^{\sharp}_{S}\cap D=U,
\)
under the canonical identification of $D$ with its principal points.
\end{lemma}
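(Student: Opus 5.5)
To prove that \(\Diamond U\) is Scott open in \(\widehat D\), I would check the two defining properties directly, using the description \eqref{eq:directed-sup-Dsharp} of directed suprema in \(\widehat D\). The set \(\Diamond U\) is an upper set: if \(p\subseteq q\) in \(\widehat D\) and \(p\cap U\neq\varnothing\), then \(q\cap U\neq\varnothing\).

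For inaccessibility by directed suprema, let \((p_i)_{i\in I}\) be directed in \(\widehat D\) and suppose \(\bigvee_i p_i\in\Diamond U\). By \eqref{eq:directed-sup-Dsharp}, this means \(\operatorname{cl}_{\sigma(D)}\bigl(\bigcup_i p_i\bigr)\cap U\neq\varnothing\). Since \(U\) is Scott open, a closure meets \(U\) only if the set itself meets \(U\). Otherwise \(\bigcup_i p_i\) would lie in the closed set \(D\setminus U\), and so would its closure. Hence some \(p_i\) meets \(U\), that is, \(p_i\in\Diamond U\). This is the only point requiring care, and it is a standard closure argument; I expect no real obstacle.

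For the consequences, let \(D\subseteq S\subseteq\widehat D\) be a sub-dcpo. Since \(S\) is a sub-dcpo, directed suprema in \(S\) agree with those in \(\widehat D\). The intersection of a Scott-open set of \(\widehat D\) with a sub-dcpo is Scott open in that sub-dcpo: it remains an upper set in \(S\), and inaccessibility transfers because the suprema coincide. Thus \(U^{\sharp}_S=S\cap\Diamond U\in\sigma(S)\).

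Finally, I compute the trace on \(D\), identified with the principal ideals \(\mathord{\downarrow}x\). For \(x\in D\), we have \(\mathord{\downarrow}x\in\Diamond U\) iff \(\mathord{\downarrow}x\cap U\neq\varnothing\). Since \(U\) is an upper set, this holds iff \(x\in U\). Hence \(U^{\sharp}_S\cap D=U\), which gives surjectivity of the inverse-image map \(\sigma(S)\to\sigma(D)\) along the inclusion.
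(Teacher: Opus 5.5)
Your proposal is correct and follows essentially the same route as the paper's proof: upper set, inaccessibility via the closure formula \eqref{eq:directed-sup-Dsharp} and openness of \(U\), and the trace computation on principal points using that \(U\) is an upper set. The one addition is that you spell out why \(S\cap\Diamond U\) is Scott open in a sub-dcpo \(S\) (directed suprema in \(S\) agree with those in \(\widehat D\)). The paper leaves this implicit in ``the final assertion follows''.
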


\begin{proof}
The set $\Diamond U$ is an upper set.  Let $(p_i)_{i\in I}$ be directed in
$\widehat D$, and let
\(
p=\bigvee_i p_i
=
\operatorname{cl}_{\sigma(D)}
\!\left(\bigcup_i p_i\right)
\)
belong to $\Diamond U$.  Then $U$ meets
$\operatorname{cl}_{\sigma(D)}(\bigcup_i p_i)$.  Since $U$ is open, it
already meets $\bigcup_i p_i$.  Hence $p_i\cap U\neq\varnothing$ for some
$i$, so $p_i\in\Diamond U$.  Thus $\Diamond U$ is Scott open.

Now let $\mathord{{\downarrow}}x$ be a principal point.  Because $U$ is an
upper set,
\(
\mathord{{\downarrow}}x\cap U\neq\varnothing
\ \Longleftrightarrow\ 
x\in U.
\)
The final assertion follows.
\end{proof}

\begin{theorem}
\label{thm:internal-trace-criterion}
Let $D\subseteq S\subseteq \widehat D$ be a sub-dcpo, and let
$i:D\hookrightarrow S$ be the inclusion.  Then
\(
i\in E_{\two}
\)
if and only if every point of $S$ is supported by its $D$-trace in $S$.
\end{theorem}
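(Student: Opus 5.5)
The plan is to prove the two implications separately. In both, the inverse-image map \(i^{-1}:\sigma(S)\to\sigma(D)\), \(W\mapsto W\cap D\), is the object to control. By the discussion before \cref{lem:open-embedding}, \(i\in E_{\two}\) means exactly that this map is bijective. Surjectivity comes for free: for \(U\in\sigma(D)\), \cref{lem:diamond-open-extension} gives \(U^\sharp_S=S\cap\Diamond U\in\sigma(S)\) with \(U^\sharp_S\cap D=U\). So the whole theorem reduces to showing that injectivity of \(W\mapsto W\cap D\) on \(\sigma(S)\) is equivalent to the trace-support condition. Equivalently, one can pass to complements and work with \(C\mapsto C\cap D\) on \(\Gamma(S)\).

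For the forward direction, assume \(i\in E_{\two}\), and let \(p\in S\) and \(W\in\sigma(S)\) with \(p\in W\). I will apply \cref{lem:skula-density-equable} to \(e=i\) and \(y=p\). This gives a point \(d\in W\cap{\downarrow}_S p\cap D\), that is, a point of \(\operatorname{Tr}^S_D(p)\) inside \(W\). Since \(W\) was an arbitrary Scott-open neighbourhood of \(p\), it follows that \(p\in\operatorname{cl}_{\sigma(S)}(\operatorname{Tr}^S_D(p))\), so every point of \(S\) is supported.

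For the converse, assume every point is supported, and let \(C_1,C_2\in\Gamma(S)\) satisfy \(C_1\cap D=C_2\cap D\). Take \(p\in C_1\). Because \(C_1\) is a lower set in \(S\), we get \(\operatorname{Tr}^S_D(p)=D\cap{\downarrow}_S p\subseteq C_1\cap D=C_2\cap D\subseteq C_2\). The test \eqref{eq:closed-support-test} then gives \(p\in C_2\). Hence \(C_1\subseteq C_2\), and by symmetry \(C_1=C_2\), so \(i^{-1}\) is injective on \(\Gamma(S)\) and therefore on \(\sigma(S)\). Combined with the surjectivity above, \(i^{-1}\) is a bijection of frames and hence an order isomorphism, which means \(i\in E_{\two}\).

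The step needing most care is the equivalence, stated in \cref{def:trace-support}, between the closure formulation and \eqref{eq:closed-support-test}. It holds because \(\operatorname{cl}_{\sigma(S)}(T)\) is the least Scott-closed subset of \(S\) containing \(T\). I also need to check that the identification of \(D\) with the principal points \(\eta_D[D]\) makes \(D\) a sub-dcpo of \(S\), so that Scott-open subsets of \(D\) in its own Scott topology are the ones being compared. This is exactly \cref{lem:eta-dcpo-embedding}. Beyond that, no real obstacle is expected, since \cref{lem:skula-density-equable} already carries the forward direction.
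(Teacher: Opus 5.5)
Your proposal is correct and takes essentially the same route as the paper. Surjectivity of $i^{-1}$ comes from \cref{lem:diamond-open-extension}, and the forward direction is the density argument of \cref{lem:skula-density-equable}, which the paper re-runs inline using $W\setminus{\downarrow}_S p$. The converse's injectivity step uses the support condition exactly as the paper does, only phrased for Scott-closed sets $C_1,C_2$ rather than the complementary Scott-open sets $W_1,W_2$.
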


\begin{proof}
Assume first that
\(
i^{-1}:\sigma(S)\cong\sigma(D)
\)
is an isomorphism.  Fix $p\in S$ and $W\in\sigma(S)$ with $p\in W$.  If
\(
W\cap\operatorname{Tr}^{S}_{D}(p)=\varnothing,
\)
then $W\setminus\mathord{{\downarrow}}_{S}p$ is Scott open.  Moreover,
\(
\left(W\setminus\mathord{{\downarrow}}_{S}p\right)\cap D
=
W\cap D.
\)
Indeed, a point of $W\cap D$ lying below $p$ would belong to
$W\cap\operatorname{Tr}^{S}_{D}(p)$.  Since
$W\setminus\mathord{{\downarrow}}_{S}p\neq W$, the displayed equality
contradicts injectivity of $i^{-1}$.  Hence every Scott-open neighbourhood
of $p$ meets $\operatorname{Tr}^{S}_{D}(p)$, so $p$ is supported.

Conversely, assume every point of $S$ is supported.  By
\cref{lem:diamond-open-extension}, the restriction map
$i^{-1}:\sigma(S)\to\sigma(D)$ is surjective.  To prove injectivity, let
$W_1,W_2\in\sigma(S)$ satisfy
\(
W_1\cap D=W_2\cap D.
\)
Take $p\in W_1$.  Since $p$ is supported, there exists
\(
d\in W_1\cap\operatorname{Tr}^{S}_{D}(p).
\)
Then $d\in W_2$ and $d\leq p$.  As $W_2$ is an upper set, $p\in W_2$.
Thus $W_1\subseteq W_2$, and the reverse inclusion follows symmetrically.
Hence $W_1=W_2$.  Thus $i^{-1}$ is bijective; since inverse-image maps
preserve arbitrary unions and finite intersections, it is a frame
isomorphism, hence an isomorphism of dcpos.
\end{proof}

\subsection{Closed traces and the support operator}

The open-set criterion has a useful dual formulation in terms of Scott
closed sets.  It also connects the present construction directly with the
representations in the irreducible spectrum used in \cref{sec:gamma}.

For $F\in\Gamma(D)$, define its standard closed extension to $\widehat D$ by
\begin{equation}
\label{eq:closed-extension-hat}
\widehat F
=
\{p\in \widehat D:p\subseteq F\}.
\end{equation}
Since $F$ is Scott closed, $\widehat F$ is Scott closed in $\widehat D$.
Indeed, it is a lower set, and if a directed family $(p_i)$ is contained in
$\widehat F$, then \eqref{eq:directed-sup-Dsharp} and the closedness of $F$
show that $\bigvee_i p_i\subseteq F$.

For a sub-dcpo $D\subseteq S\subseteq \widehat D$, put
\begin{equation}
\label{eq:ThetaS}
\Theta_S:\Gamma(D)\longrightarrow\Gamma(S),
\ 
\Theta_S(F)=S\cap\widehat F.
\end{equation}
For $C\in\Gamma(S)$, define its restriction to the original dcpo by
\begin{equation}
\label{eq:closed-restriction}
C|_D
:=
\{x\in D:\mathord{{\downarrow}}x\in C\}
=
\eta_D^{-1}(C\cap\eta_D[D])\in\Gamma(D).
\end{equation}
We say that $S$ satisfies the \emph{closed-trace condition} if
\begin{equation}
\label{eq:closed-trace-condition}
C=S\cap\widehat{C|_D}
\ (C\in\Gamma(S)).
\end{equation}

\begin{proposition}
\label{prop:closed-trace-criterion}
Let $D\subseteq S\subseteq \widehat D$ be a sub-dcpo.  The following are
equivalent.
\begin{enumerate}
\item The inclusion $D\hookrightarrow S$ belongs to $E_{\two}$.
\item The dcpo $S$ satisfies the closed-trace condition.
\item The map
\(
\Theta_S:\Gamma(D)\cong\Gamma(S)
\)
is an order isomorphism.
\end{enumerate}
\end{proposition}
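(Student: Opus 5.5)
The plan is to prove (1)$\Leftrightarrow$(3) and (2)$\Leftrightarrow$(3), using the complement duality $\sigma\leftrightarrow\Gamma$ together with the explicit extension $\widehat F$. First I record a basic identity: for every $F\in\Gamma(D)$,
\[
\Theta_S(F)\cap D=\{x\in D:\mathord{\downarrow}x\subseteq F\}=F,
\]
under the identification of $D$ with its principal points. In other words, $(\Theta_S(F))|_D=F$, so $\Theta_S$ is always injective. Since $\widehat F$ is Scott closed in $\widehat D$, its trace $\Theta_S(F)$ is Scott closed in $S$, and $\Theta_S$ is monotone. Because $(\Theta_S(F))|_D=F$, it is also order-reflecting. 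Hence (3) is equivalent to the surjectivity of $\Theta_S$.

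For (1)$\Leftrightarrow$(3), I consider the restriction map $R:\Gamma(S)\to\Gamma(D)$, $C\mapsto C|_D$. Its complement is exactly $i^{-1}:\sigma(S)\to\sigma(D)$. By the identity above, $R\circ\Theta_S=\id_{\Gamma(D)}$, so $R$ is always surjective; this matches \cref{lem:diamond-open-extension}. If (1) holds, then $R$ is bijective, and $\Theta_S$ is its inverse, hence an order isomorphism. Conversely, if $\Theta_S$ is a bijection, then $R$, being a left inverse of a bijection, is its inverse and therefore bijective. Taking complements, $i^{-1}$ is bijective. Inverse-image maps preserve unions and finite intersections, so $i^{-1}$ is an order isomorphism and $i\in E_{\two}$.

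For (2)$\Leftrightarrow$(3), condition (2) says precisely that $C=\Theta_S(R(C))$ for every $C\in\Gamma(S)$, that is, $\Theta_S\circ R=\id_{\Gamma(S)}$. Together with $R\circ\Theta_S=\id$, this gives (3). Conversely, under (3) the map $R$ is the inverse of $\Theta_S$, since it is a left inverse of a bijection. So $\Theta_S\circ R=\id$, which is (2).

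I expect the only real obstacle to be the basic identity $\Theta_S(F)\cap D=F$. This needs $\mathord{\downarrow}x\subseteq F\iff x\in F$, which holds because $F$ is a lower set. It also needs care with the identification $D=\eta_D[D]$, in particular that $\eta_D[D]\subseteq S$ and that $S$ carries the order inherited from $\widehat D$. Everything else is formal bookkeeping with left and right inverses. Alternatively, (1)$\Leftrightarrow$(2) can be cross-checked against \cref{thm:internal-trace-criterion}: the closed-trace condition applied to $C=\mathord{\downarrow}_S p$-closures is the closed form of the support test.
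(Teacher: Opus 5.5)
Your proof is correct. The (2)$\Leftrightarrow$(3) half is essentially the paper's own argument. The identity $\eta_D^{-1}(\Theta_S(F)\cap\eta_D[D])=F$ makes $\Theta_S$ injective and order-reflecting, and the closed-trace condition says exactly $\Theta_S\circ R=\id$, i.e.\ that $\Theta_S$ is surjective.

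The difference is in how (1) is brought in. The paper connects (1) with (2), not with (3). It first notes that $C\subseteq S\cap\widehat{C|_D}$ always holds. It then shows that the closed-trace condition is equivalent to every point of $S$ being supported by its $D$-trace. Finally it appeals to \cref{thm:internal-trace-criterion}, whose proof is pointwise: for each $p$ it tests Scott-open neighbourhoods against $\operatorname{Tr}^S_D(p)$. One direction uses $W\setminus{\downarrow}_S p$; the other uses \cref{lem:diamond-open-extension} for surjectivity.

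You bypass the support notion entirely. You observe that $R\colon C\mapsto C|_D$ is the complement-conjugate of $i^{-1}\colon\sigma(S)\to\sigma(D)$. You also observe that $\Theta_S$ is always a section of $R$. In fact $S\setminus\Theta_S(F)=S\cap\Diamond(D\setminus F)$, so this is the closed-set form of \cref{lem:diamond-open-extension}. All three conditions then reduce to the single statement that this canonical section is also a retraction.

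Your route is shorter, purely formal, and does not depend on \cref{thm:internal-trace-criterion}. The paper's route is longer but yields the pointwise support characterization along the way, which is what feeds the definition of $\Supp_D$ and \cref{cor:fixed-point-characterization}.

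Your closing aside is not needed, and as phrased it is off. The instance of the closed-trace condition that gives the support test is $C=\operatorname{cl}_{\sigma(S)}(\operatorname{Tr}^S_D(p))$. Applying it to ${\downarrow}_S p$, which is already Scott closed in $S$, only returns the tautology ${\downarrow}_S p=\{q\in S:q\subseteq p\}$.
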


\begin{proof}
For every $C\in\Gamma(S)$ one always has
\begin{equation}
\label{eq:C-contained-in-hat-trace}
C\subseteq S\cap\widehat{C|_D}.
\end{equation}
Indeed, if $p\in C$ and $x\in p$, then the principal point
$\mathord{{\downarrow}}x$ satisfies
$\mathord{{\downarrow}}x\leq p$.  Since $C$ is a lower set,
$\mathord{{\downarrow}}x\in C$, hence $x\in C|_D$.  Therefore
$p\subseteq C|_D$, and so $p\in S\cap\widehat{C|_D}$.

Assume first that every point of $S$ is supported by its $D$-trace.  If
$p\in S\cap\widehat{C|_D}$, then
\(
\operatorname{Tr}^{S}_{D}(p)\subseteq C.
\)
Since $C$ is Scott closed, the support condition gives $p\in C$.  Therefore
\(
C=S\cap\widehat{C|_D}
\ (C\in\Gamma(S)),
\)
so the closed-trace condition holds.

Conversely, suppose that the closed-trace condition holds. By
\eqref{eq:C-contained-in-hat-trace}, this means
\begin{equation}
\label{eq:no-splitting-equality}
C=S\cap\widehat{C|_D}
\ (C\in\Gamma(S)).
\end{equation}
Let $p\in S$ and let $C\in\Gamma(S)$ contain
$\operatorname{Tr}^{S}_{D}(p)$.  If $x\in p$, then
$\mathord{{\downarrow}}x\in\operatorname{Tr}^{S}_{D}(p)\subseteq C$.
Thus $p\subseteq C|_D$, so
$p\in S\cap\widehat{C|_D}=C$.  By
\eqref{eq:closed-support-test}, $p$ is supported.  The equivalence of (1)
and (2) now follows from \cref{thm:internal-trace-criterion}.

It remains to compare (2) and (3). For every $F\in\Gamma(D)$,
\(
\eta_D^{-1}\!\left(\Theta_S(F)\cap\eta_D[D]\right)=F.
\)
Indeed, $\mathord{{\downarrow}}x\in\widehat F$ if and only if
$\mathord{{\downarrow}}x\subseteq F$, which is equivalent to $x\in F$.
Consequently, $\Theta_S$ is injective and reflects order: if
$\Theta_S(F)\subseteq\Theta_S(G)$, intersection with $\eta_D[D]$ and
application of $\eta_D^{-1}$ give $F\subseteq G$. By
\eqref{eq:no-splitting-equality}, the closed-trace condition is exactly the
assertion that every $C\in\Gamma(S)$ has the form $C=\Theta_S(C|_D)$.
Thus $\Theta_S$ is surjective if and only if the closed-trace condition
holds, and in that case it is an order isomorphism.
\end{proof}

\begin{remark}
\label{rem:realization-map}
For $L=\Gamma(D)$, the map $\Theta_S$ in \eqref{eq:ThetaS} is precisely the
representation map of \cref{def:realization}, because
\(
\widehat F
=
\{p\in\Irr(L):p\leq F\}.
\)
Thus \cref{prop:closed-trace-criterion} identifies the
\(\two\)-equable sub-dcpos of $\widehat D$ with the
$\Gamma(D)$-representations considered in \cref{sec:gamma}.
\end{remark}

\begin{definition}
\label{def:support-operator}
For every sub-dcpo $D\subseteq S\subseteq \widehat D$, define
\begin{equation}
\label{eq:support-operator}
\Supp_D(S)
=
\left\{
 p\in S:
 p\in
 \operatorname{cl}_{\sigma(S)}
 \!\left(\operatorname{Tr}^{S}_{D}(p)\right)
\right\}.
\end{equation}
Equivalently,
\begin{equation}
\label{eq:support-closed-form}
p\in\Supp_D(S)
\ \Longleftrightarrow\ 
\forall C\in\Gamma(S),\ 
\operatorname{Tr}^{S}_{D}(p)\subseteq C
\Longrightarrow p\in C.
\end{equation}
\end{definition}

\begin{theorem}
\label{thm:support-subdcpo}
For every sub-dcpo $D\subseteq S\subseteq \widehat D$, the set
$\Supp_D(S)$ is a sub-dcpo of $S$ containing $D$.
\end{theorem}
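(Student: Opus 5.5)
We need to show two things: $D\subseteq\Supp_D(S)$, and $\Supp_D(S)$ is closed under directed suprema computed in $S$ (equivalently in $\widehat D$, since $S$ is a sub-dcpo).

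\emph{Containment of $D$.} Let $p={\downarrow}x$ be a principal point with $x\in D$. Since $D\subseteq S$, the point $p$ itself lies in $D\cap{\downarrow}_S p=\operatorname{Tr}^S_D(p)$. Hence $p\in\operatorname{cl}_{\sigma(S)}(\operatorname{Tr}^S_D(p))$ trivially, and $p\in\Supp_D(S)$.

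\emph{Closure under directed suprema.} Let $(p_i)_{i\in I}$ be directed in $\Supp_D(S)$ and put $p=\bigvee_i p_i$, computed in $S$. We use the closed form \eqref{eq:support-closed-form}. Let $C\in\Gamma(S)$ with $\operatorname{Tr}^S_D(p)\subseteq C$. Since $p_i\le p$, each trace satisfies
\[
\operatorname{Tr}^S_D(p_i)=D\cap{\downarrow}_S p_i\subseteq D\cap{\downarrow}_S p=\operatorname{Tr}^S_D(p)\subseteq C,
\]
so trace is monotone in the point. Because each $p_i$ is supported, $p_i\in C$ for every $i$. As $C$ is Scott closed in $S$, it contains the directed supremum $p$. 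Thus $p\in\Supp_D(S)$.

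Note that $\Supp_D(S)$, with the order inherited from $S$, then contains the suprema of its directed subsets as computed in $S$, so it is a sub-dcpo. It is worth stressing that the support condition refers to the fixed ambient $S$, not to $\Supp_D(S)$; this is why the argument needs no fixed-point reasoning. The only potential obstacle is this point: the supremum must be taken in $S$ and the closed sets $C$ range over $\Gamma(S)$, and both choices match the definition, so no step looks genuinely difficult. The monotonicity of traces together with the closed-set reformulation carries the whole argument.
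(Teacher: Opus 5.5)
Your proof is correct and is essentially the paper's argument. Containment of $D$ follows from $p\in\operatorname{Tr}^S_D(p)$ for principal points, and closure under directed suprema follows from monotonicity of traces, the closed-set form \eqref{eq:support-closed-form}, and the Scott closedness of $C$ in $S$.
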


\begin{proof}
If $p=\mathord{{\downarrow}}x\in D$, then
$p\in\operatorname{Tr}^{S}_{D}(p)$, so $p\in\Supp_D(S)$.  Hence
$D\subseteq\Supp_D(S)$.

Let $(p_i)_{i\in I}$ be directed in $\Supp_D(S)$ and put
$p=\bigvee_i p_i$, where the supremum is computed in $S$.  Let
$C\in\Gamma(S)$ satisfy
\(
\operatorname{Tr}^{S}_{D}(p)\subseteq C.
\)
Since $p_i\leq p$,
\(
\operatorname{Tr}^{S}_{D}(p_i)
\subseteq
\operatorname{Tr}^{S}_{D}(p)
\subseteq C.
\)
By \eqref{eq:support-closed-form}, $p_i\in C$ for every $i$.  Since $C$ is
Scott closed, $p=\bigvee_i p_i\in C$.  Another application of
\eqref{eq:support-closed-form} gives $p\in\Supp_D(S)$.  Therefore
$\Supp_D(S)$ is a sub-dcpo of $S$.
\end{proof}

\begin{corollary}
\label{cor:fixed-point-characterization}
For a sub-dcpo $D\subseteq S\subseteq \widehat D$, the following are
equivalent:
\[
S=\Supp_D(S)\ \Longleftrightarrow \
D\hookrightarrow S\in E_{\two} \ \Longleftrightarrow \ 
\Theta_S:\Gamma(D)\cong\Gamma(S).
\]
\end{corollary}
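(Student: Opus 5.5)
The corollary assembles results already proved, so the plan is to chain them. The second and third conditions are equivalent by \cref{prop:closed-trace-criterion}, namely (1)\(\Leftrightarrow\)(3) there. So only \(S=\Supp_D(S)\Leftrightarrow (D\hookrightarrow S)\in E_{\two}\) needs an argument.

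First, I would observe that \(S=\Supp_D(S)\) holds exactly when every point of \(S\) is supported by its \(D\)-trace in \(S\). By \cref{def:support-operator}, \(\Supp_D(S)\subseteq S\) is precisely the set of points \(p\in S\) with \(p\in\operatorname{cl}_{\sigma(S)}(\operatorname{Tr}^S_D(p))\). This is the notion of \cref{def:trace-support}. Hence equality with \(S\) means that every \(p\in S\) is supported.

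Second, I would invoke \cref{thm:internal-trace-criterion}. It states that the inclusion \(i:D\hookrightarrow S\) lies in \(E_{\two}\) if and only if every point of \(S\) is supported. Combining this with the first step gives the first equivalence. The third condition then follows from \cref{prop:closed-trace-criterion}.

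There is no real obstacle. The only care needed is to check that the closed-set formulation \eqref{eq:support-closed-form} matches the closure formulation used in \cref{thm:internal-trace-criterion}. This is immediate, because a point lies in the Scott closure of a set exactly when it lies in every Scott-closed set containing that set. Note also that \(S\) is assumed to be a sub-dcpo containing \(D\), so \cref{thm:support-subdcpo} is not needed here.
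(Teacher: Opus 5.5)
Your proposal is correct and matches the paper's proof: you identify $S=\Supp_D(S)$ with every point of $S$ being supported by its $D$-trace, then apply \cref{thm:internal-trace-criterion} for the first equivalence and \cref{prop:closed-trace-criterion} for the second. The paper does exactly this, in the same order.
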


\begin{proof}
The equality $S=\Supp_D(S)$ says exactly that every point of $S$ is
supported by its $D$-trace.  Apply \cref{thm:internal-trace-criterion}
and \cref{prop:closed-trace-criterion}.
\end{proof}

We shall call a sub-dcpo $D\subseteq S\subseteq \widehat D$
\emph{$D$-admissible} if it satisfies the equivalent conditions of
\cref{cor:fixed-point-characterization}.

\subsection{Transfinite iteration}

Starting from the full order sobrification, define a transfinite decreasing
sequence $(S_\alpha)$ by
\begin{equation}
\label{eq:support-chain}
S_0=\widehat D,
\quad
S_{\alpha+1}=\Supp_D(S_\alpha),
\quad
S_\lambda=\bigcap_{\alpha<\lambda}S_\alpha
\ (\lambda\text{ limit}).
\end{equation}
By \cref{thm:support-subdcpo} and transfinite induction, every
$S_\alpha$ is a sub-dcpo of $\widehat D$ containing $D$.  At a limit stage,
the intersection is again a sub-dcpo because directed suprema are computed
in the ambient dcpo $\widehat D$.

Let $\kappa=|\widehat D|$. The sequence stabilizes before $\kappa^+$.
Indeed, if $S_\alpha\neq S_{\alpha+1}$ for every $\alpha<\kappa^+$,
choose $p_\alpha\in S_\alpha\setminus S_{\alpha+1}$. Since the sequence is
decreasing, the points $p_\alpha$ are pairwise distinct, giving an injection
$\kappa^+\to \widehat D$, a contradiction. Hence
$S_\beta=S_{\beta+1}$ for some $\beta<\kappa^+$.
We denote the stable value by
\begin{equation}
\label{eq:S-infty}
S_\infty(D):=S_\beta.
\end{equation}
The value is independent of the particular stabilizing ordinal, since the
sequence is constant from the first fixed point onward.

\begin{theorem}
\label{thm:maximal-admissible}
The dcpo $S_\infty(D)$ is the largest $D$-admissible sub-dcpo of
$\widehat D$.  Equivalently, it is the largest sub-dcpo
\(
D\subseteq T\subseteq \widehat D
\)
for which the inclusion $D\hookrightarrow T$ belongs to $E_{\two}$.
\end{theorem}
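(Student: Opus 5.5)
The proof has two parts: show that $S_\infty(D)$ is $D$-admissible, and show that every $D$-admissible $T$ lies inside every $S_\alpha$.

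\emph{Admissibility of the stable value.} By construction $S_\infty(D)=S_\beta=S_{\beta+1}=\Supp_D(S_\beta)$. By \cref{cor:fixed-point-characterization}, a sub-dcpo $S$ with $S=\Supp_D(S)$ is exactly one for which $D\hookrightarrow S$ belongs to $E_{\two}$. So $S_\infty(D)$ is $D$-admissible.

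\emph{Maximality.} Let $T$ be any $D$-admissible sub-dcpo with $D\subseteq T\subseteq\widehat D$, so that $T=\Supp_D(T)$. I will show $T\subseteq S_\alpha$ for every $\alpha$ by transfinite induction.

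The base case is $T\subseteq\widehat D=S_0$. At a limit stage the inclusions pass to the intersection $S_\lambda=\bigcap_{\alpha<\lambda}S_\alpha$.

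For the successor step, the key is a monotonicity property of the support operator: if $D\subseteq T\subseteq S$ are sub-dcpos of $\widehat D$, then
\[
\Supp_D(T)\subseteq\Supp_D(S).
\]
Given that, $T\subseteq S_\alpha$ yields $T=\Supp_D(T)\subseteq\Supp_D(S_\alpha)=S_{\alpha+1}$.

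To prove the monotonicity, fix $p\in\Supp_D(T)$ and a Scott-closed set $C\in\Gamma(S)$ with $\operatorname{Tr}^{S}_{D}(p)\subseteq C$. We need $p\in C$, using the closed form \eqref{eq:support-closed-form}. There are three steps.
\begin{enumerate}
\item Since $T$ is a sub-dcpo of $S$ with the inherited order, $C\cap T$ is a lower set of $T$. It is also closed under directed suprema in $T$, because suprema in $T$ are computed in $\widehat D$, hence agree with those in $S$. So $C\cap T\in\Gamma(T)$.
\item For $p\in T$, both traces are just the principal points below $p$ in $\widehat D$, so $\operatorname{Tr}^{T}_{D}(p)=D\cap{\downarrow}p=\operatorname{Tr}^{S}_{D}(p)$.
\item Combining these, $\operatorname{Tr}^{T}_{D}(p)\subseteq C\cap T$. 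Since $p$ is supported in $T$, this gives $p\in C\cap T\subseteq C$.
\end{enumerate}
Hence $p\in\Supp_D(S)$.

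Taking $\alpha=\beta$ gives $T\subseteq S_\infty(D)$, so $S_\infty(D)$ is the largest $D$-admissible sub-dcpo. The "equivalently" clause of the theorem is then just the definition of admissibility together with \cref{cor:fixed-point-characterization}.

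The main point needing care is the monotonicity of $\Supp_D$. Its content is that restricting Scott-closed sets from $S$ to $T$ keeps them Scott closed, and this holds because both are sub-dcpos of the common ambient dcpo $\widehat D$. One only needs restriction from $S$ to $T$, not extension from $T$ to $S$, which is why the argument goes through.
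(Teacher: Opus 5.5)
Your proof is correct and follows the paper's argument. Admissibility of $S_\infty(D)$ comes from stability via \cref{cor:fixed-point-characterization}, and maximality from a transfinite induction showing $T\subseteq S_\alpha$, whose successor step uses the same two facts: $C\cap T$ is Scott closed in $T$, and the $D$-traces in $T$ and $S_\alpha$ coincide. The only difference is packaging: you state the successor step as the monotonicity $\Supp_D(T)\subseteq\Supp_D(S)$ for $D\subseteq T\subseteq S$, which your argument proves in general, whereas the paper runs the same reasoning inline for admissible $T$ and in \cref{rem:no-monotonicity-needed} explicitly declines to assert monotonicity.
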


\begin{proof}
By stability,
\(
S_\infty(D)=\Supp_D(S_\infty(D)).
\)
Hence \cref{cor:fixed-point-characterization} shows that
$S_\infty(D)$ is $D$-admissible.

Now let $T$ be any $D$-admissible sub-dcpo of $\widehat D$.  We prove by
transfinite induction that
\(
T\subseteq S_\alpha
, \text{for every ordinal }\alpha.
\)
The initial step is immediate.  Suppose $T\subseteq S_\alpha$, and fix
$p\in T$.  Let $C\in\Gamma(S_\alpha)$ satisfy
\(
\operatorname{Tr}^{S_\alpha}_{D}(p)\subseteq C.
\)
Because $D\subseteq T\subseteq S_\alpha$ and all orders are inherited from
$\widehat D$,
\(
\operatorname{Tr}^{T}_{D}(p)
=
\operatorname{Tr}^{S_\alpha}_{D}(p).
\)
The set $C\cap T$ is Scott closed in $T$, and it contains
$\operatorname{Tr}^{T}_{D}(p)$.  Since $T$ is $D$-admissible,
\cref{thm:internal-trace-criterion} implies that $p$ is supported by
its $D$-trace in $T$.  Therefore
\(
p\in C\cap T\subseteq C.
\)
By \eqref{eq:support-closed-form}, $p\in\Supp_D(S_\alpha)=S_{\alpha+1}$.
Thus $T\subseteq S_{\alpha+1}$.  At a limit ordinal, the conclusion follows
by intersection.  Hence $T\subseteq S_\infty(D)$.
\end{proof}

\subsection{Identification with the reflection}

Let
\(
\lambda_D:D\longrightarrow \mathbf{L_2}D
\)
be the $\Rtwo$-reflection unit. By \cref{lem:Gamma-unit},
$\lambda_D\in E_{\two}$.  \cref{lem:internal-representation} therefore
gives a dcpo embedding
\(
\rho_{\lambda_D}:\mathbf{L_2}D\longrightarrow \widehat D.
\)
Put
\begin{equation}
\label{eq:T-lambda}
T_\lambda:=\rho_{\lambda_D}[\mathbf{L_2}D]\subseteq \widehat D.
\end{equation}

\begin{lemma}
\label{lem:T-lambda-admissible}
The sub-dcpo $T_\lambda$ is $D$-admissible.
\end{lemma}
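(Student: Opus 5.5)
The plan is to transport the property \(\lambda_D\in E_{\two}\) along the dcpo isomorphism \(\rho_{\lambda_D}:\mathbf{L_2}D\to T_\lambda\). Then \cref{cor:fixed-point-characterization} shows that the inclusion \(D\hookrightarrow T_\lambda\) lies in \(E_{\two}\). Write \(\rho=\rho_{\lambda_D}\).

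First, we check that \(T_\lambda\) is a sub-dcpo of \(\widehat D\) containing the principal copy of \(D\). By \cref{lem:internal-representation}(2), \(\rho\) is a dcpo embedding, so its image is a sub-dcpo and \(\rho\) corestricts to an isomorphism of dcpos \(\rho':\mathbf{L_2}D\cong T_\lambda\). By \cref{lem:internal-representation}(3), \(\rho(\lambda_D(x))={\downarrow}x=\eta_D(x)\) for every \(x\in D\). Hence \(\eta_D[D]\subseteq T_\lambda\), and under the identification of \(D\) with \(\eta_D[D]\) we get
\[
i=\rho'\circ\lambda_D ,
\]
where \(i:D\hookrightarrow T_\lambda\) is the inclusion.

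Next, we compute inverse images:
\[
i^{-1}=\lambda_D^{-1}\circ\rho'^{-1}:\sigma(T_\lambda)\to\sigma(\mathbf{L_2}D)\to\sigma(D).
\]
The first factor is a bijection, since \(\rho'\) is an isomorphism of dcpos and hence a Scott homeomorphism. The second factor is an order isomorphism by \cref{lem:Gamma-unit}. Therefore \(i^{-1}\) is an order isomorphism, and \(i\in E_{\two}\). Equivalently, \(i^*:[T_\lambda\to\two]\to[D\to\two]\) is the composite of \(\lambda_D^*\) with the isomorphism induced by \(\rho'\). By \cref{cor:fixed-point-characterization}, \(T_\lambda\) is \(D\)-admissible.

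The main point needing care is not an analytic estimate but bookkeeping. We must be sure that the identification of \(D\) with principal points is compatible with \(\rho\circ\lambda_D=\eta_D\), so that the composite really is the inclusion. This compatibility is exactly \cref{lem:internal-representation}(3), which requires \(\lambda_D\) to reflect order. That in turn follows from \cref{lem:open-embedding}, since \(\lambda_D^{-1}\) is surjective on Scott-open sets.
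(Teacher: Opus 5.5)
Your proposal is correct and follows the paper's argument. The paper also observes that \(\rho_{\lambda_D}\) corestricts to a dcpo isomorphism \(\mathbf{L_2}D\cong T_\lambda\) with \(\rho_{\lambda_D}\circ\lambda_D=\eta_D\), so the inclusion \(D\hookrightarrow T_\lambda\) is isomorphic under \(D\) to \(\lambda_D\) and therefore lies in \(E_{\two}\); your factorization \(i^{-1}=\lambda_D^{-1}\circ\rho'^{-1}\) spells out that transport step explicitly.
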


\begin{proof}
The map
\(
\rho_{\lambda_D}:\mathbf{L_2}D\longrightarrow T_\lambda
\)
is a dcpo isomorphism, and
\cref{lem:internal-representation} gives
\(
\rho_{\lambda_D}(\lambda_D(x))=\mathord{{\downarrow}}x
\ (x\in D).
\)
Thus the inclusion $D\hookrightarrow T_\lambda$ is isomorphic, as a map
under $D$, to $\lambda_D$.  Since $\lambda_D\in E_{\two}$, the inclusion
$D\hookrightarrow T_\lambda$ also belongs to $E_{\two}$.
\end{proof}

The next lemma is the universality step needed to identify the maximal
admissible sub-dcpo with the reflection image.

\begin{lemma}
\label{lem:admissible-contained-Tlambda}
Let $D\subseteq T\subseteq \widehat D$ be $D$-admissible.  Then
\(
T\subseteq T_\lambda.
\)
\end{lemma}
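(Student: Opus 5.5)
The approach is to use the universal property of the reflection together with the fact that \(\widehat D\in\Rtwo\) (\cref{cor:sober-consequences}(iii)), and then compare two maps into \(\widehat D\) that agree on \(D\).

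First, write \(j:T\hookrightarrow\widehat D\) and \(i_T:D\hookrightarrow T\) for the inclusions. Since \(T\) is \(D\)-admissible, \(i_T\in E_{\two}\). Apply the reflection property to \(i_T\). I would rather not use that route directly, because \(T\) need not lie in \(\Rtwo\). Instead I would use the map \(\lambda_D:D\to\mathbf{L_2}D\) together with the fact that \(\mathbf{L_2}D\in\Rtwo=E_{\two}^\perp\). Since \(i_T\in E_{\two}\), the morphism \(i_T\) is \(\mathbf{L_2}D\)-equable, so \(\lambda_D\) extends uniquely to a Scott-continuous map \(g:T\to\mathbf{L_2}D\) with \(g\circ i_T=\lambda_D\).

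Second, I compose with the embedding \(\rho:=\rho_{\lambda_D}:\mathbf{L_2}D\to\widehat D\) of \cref{lem:internal-representation}. This gives two Scott-continuous maps \(j,\ \rho\circ g:T\to\widehat D\). On \(D\) they agree, since \(\rho(g({\downarrow}x))=\rho(\lambda_D(x))={\downarrow}x=j({\downarrow}x)\) by \cref{lem:internal-representation}(3). Now \(\widehat D\in\Rtwo\) and \(i_T\in E_{\two}\), so \(i_T^*:[T\to\widehat D]\to[D\to\widehat D]\) is a bijection. Hence \(j=\rho\circ g\). Therefore every \(p\in T\) equals \(\rho(g(p))\in\rho[\mathbf{L_2}D]=T_\lambda\), which gives \(T\subseteq T_\lambda\).

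The main point to check is that equability of \(i_T\) against targets in \(\Rtwo\) gives both the existence of \(g\) and uniqueness of extensions into \(\widehat D\). Both follow immediately from the definition of \(E_{\two}^\perp\), since \(\mathbf{L_2}D\) and \(\widehat D\) are \(\two\)-complete. No appeal to \cref{thm:rigidity} or to the support iteration is needed. I also do not need to show that \(g\) is an embedding. If desired, one could additionally deduce that \(g\) is injective from \(\rho\circ g=j\).
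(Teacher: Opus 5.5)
Your proof is correct, and it reaches the conclusion by a different route than the paper. The first step is shared: both proofs use $\mathbf{L_2}D\in E_{\two}^{\perp}$ to extend $\lambda_D$ along the inclusion $D\hookrightarrow T$ to a map $T\to\mathbf{L_2}D$.

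The paper then shows that this extension is itself in $E_{\two}$, since $f^{-1}=(i^{-1})^{-1}\circ\lambda_D^{-1}$. It deduces from \cref{lem:open-embedding} that $f$ reflects order. Finally it computes pointwise that $\rho_{\lambda_D}(f(t))=\{x\in D:{\downarrow}x\subseteq t\}=t$.

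You instead get the identity $\rho\circ g=j$ all at once, from uniqueness of extensions along $i_T$ into $\widehat D$. So you never need order reflection of $g$; in fact it follows afterwards, because $\rho\circ g=j$ and $j$ is an order embedding.

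Your appeal to $\widehat D\in\Rtwo$ is valid but heavier than necessary, since it imports the sober-space and Skula-closure machinery of \cref{sec:closure}. Uniqueness of extensions along $i_T$ into an arbitrary dcpo already follows from two facts: $i_T^{-1}$ is injective on Scott-open sets, and Scott spaces are $T_0$. This is exactly the argument of \cref{lem:restriction-unique} and of the proof of \cref{thm:rigidity}.

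The paper's route is more hands-on and records explicitly that the extension is a $\two$-equable order embedding. Yours is the cleaner epimorphism-style argument and avoids checking $f\in E_{\two}$ altogether.
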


\begin{proof}
Let $i:D\hookrightarrow T$ denote the inclusion.  Since $T$ is
$D$-admissible, $i\in E_{\two}$.  Because
$\mathbf{L_2}D\in\Rtwo=E_{\two}^{\perp}$, the morphism $i$ is
$\mathbf{L_2}D$-equable.  Hence there exists a unique Scott-continuous map
\(
f:T\longrightarrow \mathbf{L_2}D
\)
such that
\begin{equation}
\label{eq:f-extends-lambda}
f\circ i=\lambda_D.
\end{equation}

We claim that $f\in E_{\two}$.  On Scott-open sets,
\eqref{eq:f-extends-lambda} gives
\(
i^{-1}\circ f^{-1}=\lambda_D^{-1}.
\)
Both
\(
i^{-1}:\sigma(T)\cong\sigma(D)
\ \text{and}\ 
\lambda_D^{-1}:\sigma(\mathbf{L_2}D)\cong\sigma(D)
\)
are isomorphisms.  Therefore
\begin{equation}
\label{eq:f-open-isomorphism}
f^{-1}
=
(i^{-1})^{-1}\circ\lambda_D^{-1}
:
\sigma(\mathbf{L_2}D)\cong\sigma(T),
\end{equation}
so $f\in E_{\two}$. In particular, \cref{lem:open-embedding} implies that $f$ reflects
order.

Let $t\in T$.  Using \eqref{eq:f-extends-lambda} and order reflection of
$f$, we obtain
\begin{align*}
\rho_{\lambda_D}(f(t))
&=
\{x\in D:\lambda_D(x)\leq f(t)\}\\
&=
\{x\in D:f(i(x))\leq f(t)\}\\
&=
\{x\in D:i(x)\leq t\}\\
&=
\{x\in D:\mathord{{\downarrow}}x\subseteq t\}\\
&=t.
\end{align*}
The last equality uses that $t$, as a point of $\widehat D$, is a lower
subset of $D$.  Hence $t\in T_\lambda$, proving
$T\subseteq T_\lambda$.
\end{proof}

\begin{theorem}
\label{thm:internal-formula-reflection}
For every dcpo $D$, the embedding associated with the reflection unit
restricts to a canonical dcpo isomorphism
\(
\rho_{\lambda_D}:\mathbf{L_2}D \cong S_\infty(D).
\)
Equivalently, $\mathbf{L_2}D$ is represented inside
\(\widehat D=\Irr(\Gamma(D))\) as the largest sub-dcpo
containing $D$ for which restriction induces an isomorphism of Scott-open
set lattices.
\end{theorem}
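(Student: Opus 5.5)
The plan is to show that \(T_\lambda=S_\infty(D)\). Since \(\rho_{\lambda_D}\) is a dcpo embedding by \cref{lem:internal-representation}, it is then a dcpo isomorphism of \(\mathbf{L_2}D\) onto its image \(T_\lambda\), and hence onto \(S_\infty(D)\). The equality will come from two inclusions, each supplied by results already in place.

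\emph{First inclusion.} By \cref{lem:T-lambda-admissible}, \(T_\lambda\) is \(D\)-admissible. By \cref{thm:maximal-admissible}, \(S_\infty(D)\) is the largest \(D\)-admissible sub-dcpo of \(\widehat D\). Hence \(T_\lambda\subseteq S_\infty(D)\).

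\emph{Second inclusion.} By \cref{thm:maximal-admissible} again, \(S_\infty(D)\) is itself \(D\)-admissible. So \cref{lem:admissible-contained-Tlambda} applies with \(T=S_\infty(D)\) and gives \(S_\infty(D)\subseteq T_\lambda\).

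Together the two inclusions give \(T_\lambda=S_\infty(D)\), so the corestriction \(\rho_{\lambda_D}:\mathbf{L_2}D\to S_\infty(D)\) is a bijective dcpo embedding, that is, a dcpo isomorphism. It is canonical: by \cref{lem:internal-representation}(3) it satisfies \(\rho_{\lambda_D}\circ\lambda_D=\eta_D\), so it lies over \(D\).

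For the ``equivalently'' clause, \cref{cor:fixed-point-characterization} identifies \(D\)-admissibility with the condition that \(D\hookrightarrow T\) lies in \(E_{\two}\). This holds exactly when restriction \(\sigma(T)\to\sigma(D)\) is an isomorphism. So \(S_\infty(D)\) is the largest sub-dcpo containing \(D\) with this property, and the previous paragraph identifies it with \(\mathbf{L_2}D\).

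There is no genuine obstacle; the substance sits in \cref{lem:admissible-contained-Tlambda}, whose computation \(\rho_{\lambda_D}(f(t))=t\) uses that \(f\) reflects order. The one point to check is that the identification of \(D\) with \(\eta_D[D]\) is used consistently. In particular, the inclusion \(D\hookrightarrow T_\lambda\) must correspond to \(\lambda_D\) under \(\rho_{\lambda_D}\), and this is exactly \cref{lem:internal-representation}(3).
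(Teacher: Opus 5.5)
Your proposal is correct and is exactly the paper's argument. You get $T_\lambda\subseteq S_\infty(D)$ from \cref{lem:T-lambda-admissible} and \cref{thm:maximal-admissible}, you get $S_\infty(D)\subseteq T_\lambda$ from the admissibility of $S_\infty(D)$ and \cref{lem:admissible-contained-Tlambda}, and you conclude that $\rho_{\lambda_D}$ is a dcpo isomorphism onto $T_\lambda=S_\infty(D)$; your added remarks on canonicity ($\rho_{\lambda_D}\circ\lambda_D=\eta_D$) and on the ``equivalently'' clause via \cref{cor:fixed-point-characterization} are accurate and merely make explicit what the paper leaves implicit.
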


\begin{proof}
By \cref{lem:T-lambda-admissible}, $T_\lambda$ is $D$-admissible.
\cref{thm:maximal-admissible} therefore gives
\(
T_\lambda\subseteq S_\infty(D).
\)
Conversely, $S_\infty(D)$ is $D$-admissible, so
\cref{lem:admissible-contained-Tlambda} gives
\(
S_\infty(D)\subseteq T_\lambda.
\)
Hence
\(
S_\infty(D)=T_\lambda.
\)
Since $\rho_{\lambda_D}:\mathbf{L_2}D\to T_\lambda$ is a dcpo isomorphism, the
result follows.
\end{proof}

\subsection{Consequences}

\begin{corollary}
\label{cor:R2-fixed-pruning}
For every dcpo $D$,
\[
D\in\mathcal R_2
\ \Longleftrightarrow\ 
S_\infty(D)=D,
\]
where the equality on the right is taken inside $\widehat D$ after the
canonical identification $x\leftrightarrow\mathord{{\downarrow}}x$.
\end{corollary}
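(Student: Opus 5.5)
The plan is to reduce both directions to \cref{thm:internal-formula-reflection}, which identifies $S_\infty(D)$ with $T_\lambda=\rho_{\lambda_D}[\mathbf{L_2}D]$. Under this identification $\rho_{\lambda_D}\circ\lambda_D=\eta_D$ (\cref{lem:internal-representation}(3)), so the inclusion $D\hookrightarrow S_\infty(D)$ is isomorphic, as a map under $D$, to the reflection unit $\lambda_D$. The statement $S_\infty(D)=D$ then becomes ``$\lambda_D$ is surjective'', and I will show that this is equivalent to $D\in\Rtwo$.

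For ($\Rightarrow$), assume $D\in\Rtwo$. By \cref{lem:Gamma-unit}, $\lambda_D\in E_{\two}$. By \cref{thm:rigidity}, applied with $e=\lambda_D$, the unit $\lambda_D$ is an isomorphism. Hence $T_\lambda=\rho_{\lambda_D}[\lambda_D[D]]=\eta_D[D]$, and \cref{thm:internal-formula-reflection} gives $S_\infty(D)=T_\lambda=D$.

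Alternatively, one can avoid the reflection entirely. By \cref{thm:maximal-admissible}, the inclusion $i:D\hookrightarrow S_\infty(D)$ lies in $E_{\two}$. Then \cref{thm:rigidity} makes $i$ an isomorphism, and an isomorphism which is an inclusion is an equality.

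For ($\Leftarrow$), assume $S_\infty(D)=\eta_D[D]$. Then $\rho_{\lambda_D}:\mathbf{L_2}D\to\eta_D[D]$ is a dcpo isomorphism by \cref{thm:internal-formula-reflection}. Composing with $\eta_D^{-1}$, which is a dcpo isomorphism onto its image by \cref{lem:eta-dcpo-embedding}, gives an isomorphism $\mathbf{L_2}D\cong D$. Since $\mathbf{L_2}D\in\Rtwo$ and full subcategories are closed under isomorphisms, $D\in\Rtwo$.

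The only delicate point is bookkeeping the identification $x\leftrightarrow{\downarrow}x$. One must check that ``$S_\infty(D)=D$'' means equality with $\eta_D[D]$ as subsets of $\widehat D$, rather than merely an abstract isomorphism. This is already handled by \cref{lem:internal-representation}(3), so I expect no real obstacle.
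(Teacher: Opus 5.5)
Your proposal is correct and matches the paper's proof. Both directions are reduced to \cref{thm:internal-formula-reflection}. In the forward direction you use that $\lambda_D$ is an isomorphism when $D\in\Rtwo$; the paper simply asserts this, while you justify it explicitly via \cref{lem:Gamma-unit} and \cref{thm:rigidity}. In the converse you transport $\mathbf{L_2}D\cong S_\infty(D)=\eta_D[D]\cong D$ and use closure of $\Rtwo$ under isomorphisms, exactly as the paper does.

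Your alternative forward argument is also valid: apply \cref{thm:rigidity} to the admissible inclusion $D\hookrightarrow S_\infty(D)$ given by \cref{thm:maximal-admissible}, precomposed with the isomorphism $\eta_D:D\cong\eta_D[D]$. It has the small advantage of not needing the identification of $S_\infty(D)$ with the reflection at all.
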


\begin{proof}
If $D\in\Rtwo$, then the reflection unit $\lambda_D$ is an
isomorphism.  Hence its internal image is exactly the principal copy of
$D$, so \cref{thm:internal-formula-reflection} gives
$S_\infty(D)=D$.

Conversely, if $S_\infty(D)=D$, then
\cref{thm:internal-formula-reflection} gives $\mathbf{L_2}D\cong D$.  Since
$\mathbf{L_2}D\in\Rtwo$ and $\Rtwo$ is closed under isomorphisms, it follows that
$D\in\Rtwo$.
\end{proof}

\begin{remark}
\label{rem:no-monotonicity-needed}
The defining condition of the support operator is evaluated in the current ambient sub-dcpo $S$,
so the Scott-closed sets used in \eqref{eq:support-closed-form} may change
when $S$ is replaced by a smaller sub-dcpo.  Accordingly, no monotonicity
of the assignment $S\mapsto\Supp_D(S)$ is asserted or needed.  The
maximality of $S_\infty(D)$ follows instead from the explicit transfinite
induction in \cref{thm:maximal-admissible}.  The transfinite induction, rather than monotonicity of \(S\mapsto\Supp_D(S)\), is therefore the basis of the maximality argument.
\end{remark}
\section{The Keimel-Lawson conditions in DCPO}\label{sec:KL}

This section is motivated by the Keimel--Lawson approach to
reflections in $\mathbf{Top}_0$, where suitable closure conditions on
a full subcategory containing the sober spaces lead to reflectivity;
see \cite{Keimel2009,Shen2024,Ershov2022}.  We establish an analogous criterion in $\mathbf{DCPO}$.

By \cref{cor:minimal}, every non-discrete reflective full subcategory of
$\mathbf{DCPO}$ contains $R_2$.  Hence, for a full subcategory
\(\mathcal K\) with $R_2\subseteq \mathcal K$, it is natural to construct the
\(\mathcal K\)-reflection of $D$ inside its $R_2$-reflection $\mathbf{L_2}D$.  We do this
by considering \(\mathcal K\)-subspaces of $\mathbf{L_2}D$ containing $\lambda_D[D]$.
\cref{thm:KL} characterizes when the resulting hull yields the
\(\mathcal K\)-reflection.

\subsection{Scott subspaces}

\begin{definition}
Let \(A\subseteq X\) be a sub-dcpo, with inclusion \(i_A:A\hookrightarrow X\). We call \(A\) a \emph{Scott subspace} of \(X\), and write \(A\le_\Sigma X\), if
\[
  i_A^{-1}:\sigma(X)\twoheadrightarrow\sigma(A)
\]
is surjective.
\end{definition}

Only surjectivity is required: it says that the Scott topology of \(A\) is the subspace topology inherited from \(X\). Requiring \(i_A^{-1}\) to be an isomorphism would impose the much stronger condition that distinct Scott-open subsets of \(X\) remain distinct after restriction to \(A\).

Let \(\mathcal K\subseteq\DCPO\) be a full subcategory closed under isomorphisms, and let \(X\in\Rtwo\). If \(A\le_\Sigma X\) and \(A\in\mathcal K\), we write \(A\le_\mathcal KX\). If the intersection of all \(\mathcal K\)-subspaces of \(X\) containing a subset \(S\subseteq X\) is again a \(\mathcal K\)-subspace, we denote it by
\[
  \langle S\rangle_X^{\mathcal K}
\]
and call it the \(\mathcal K\)-subspace hull of \(S\) in \(X\).

\begin{lemma}\label{lem:restriction-unique}
Let \(H\le_\Sigma \mathbf{L_2}D\), with inclusion \(j:H\hookrightarrow \mathbf{L_2}D\), and let \(\kappa:D\to H\) satisfy \(j\circ\kappa=\lambda_D\). If \(h_1,h_2:H\to Z\) satisfy \(h_1\circ\kappa=h_2\circ\kappa\), then \(h_1=h_2\).
\end{lemma}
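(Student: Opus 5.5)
The plan is to show that every point of \(H\) lies in the Scott closure, inside \(H\), of the part of \(\kappa[D]\) below it, and then to separate \(h_1(y)\) from \(h_2(y)\) by Scott-open sets of \(Z\).

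\emph{Step 1 (density in \(H\)).} By \cref{lem:Gamma-unit}, \(\lambda_D\in E_{\two}\), so \cref{lem:skula-density-equable} applies to \(\lambda_D\). It says: for every \(y\in\mathbf{L_2}D\) and every \(W\in\sigma(\mathbf{L_2}D)\) with \(y\in W\), the set \(W\cap{\downarrow}y\cap\lambda_D[D]\) is nonempty. Now fix \(y\in H\) and \(V\in\sigma(H)\) with \(y\in V\).

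\begin{enumerate}
\item Since \(H\le_\Sigma\mathbf{L_2}D\), the map \(j^{-1}\) is surjective, so we may choose \(W\in\sigma(\mathbf{L_2}D)\) with \(W\cap H=V\). Then \(y\in W\).
\item The density lemma gives \(d\in D\) with \(\lambda_D(d)\le y\) and \(\lambda_D(d)\in W\).
\item Because \(j\circ\kappa=\lambda_D\), we have \(\lambda_D(d)=\kappa(d)\in H\). Hence \(\kappa(d)\in W\cap H=V\).
\item The order of \(H\) is inherited from \(\mathbf{L_2}D\), so \(\kappa(d)\le y\) holds in \(H\).
\end{enumerate}

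Thus every Scott-open neighbourhood of \(y\) in \(H\) contains a point \(\kappa(d)\le y\).

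\emph{Step 2 (comparison in \(Z\)).} We show \(h_1(y)\le h_2(y)\) for every \(y\in H\). Let \(U\in\sigma(Z)\) with \(h_1(y)\in U\).

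\begin{enumerate}
\item The set \(h_1^{-1}(U)\) is Scott open in \(H\) and contains \(y\).
\item By Step 1 there is \(d\in D\) with \(\kappa(d)\le y\) and \(\kappa(d)\in h_1^{-1}(U)\).
\item Then \(h_2(\kappa(d))=h_1(\kappa(d))\in U\).
\item By monotonicity, \(h_2(\kappa(d))\le h_2(y)\). Since \(U\) is an upper set, \(h_2(y)\in U\).
\end{enumerate}

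So every Scott-open neighbourhood of \(h_1(y)\) contains \(h_2(y)\). Because the specialization order of \(\sigma(Z)\) is the order of \(Z\), this gives \(h_1(y)\le h_2(y)\). Exchanging the roles of \(h_1\) and \(h_2\) gives the reverse inequality, so \(h_1=h_2\).

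The only delicate point is Step 1. The density property of \(\lambda_D\) lives in \(\mathbf{L_2}D\), and it must be transported to \(H\). This is exactly what surjectivity of \(j^{-1}\) does: it extends Scott-open sets of \(H\) to \(\mathbf{L_2}D\). The relation \(j\circ\kappa=\lambda_D\) ensures that the witnesses \(\lambda_D(d)\) already lie in \(H\).
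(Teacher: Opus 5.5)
Your proof is correct, but it argues pointwise where the paper argues with open-set lattices.

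The paper fixes $W\in\sigma(Z)$ and uses surjectivity of $j^{-1}$ to write $h_i^{-1}(W)=j^{-1}(V_i)$ with $V_i\in\sigma(\mathbf{L_2}D)$. It notes $\lambda_D^{-1}(V_i)=\kappa^{-1}h_i^{-1}(W)$, and then gets $V_1=V_2$ from $h_1\circ\kappa=h_2\circ\kappa$ together with injectivity of $\lambda_D^{-1}$. Hence $h_1^{-1}(W)=h_2^{-1}(W)$ for every $W$, and $T_0$ finishes. In effect the paper shows that $\kappa^{-1}$ is injective on $\sigma(H)$, because $\lambda_D^{-1}=\kappa^{-1}\circ j^{-1}$ with $j^{-1}$ surjective.

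You use the same two inputs, surjectivity of $j^{-1}$ and injectivity of $\lambda_D^{-1}$. In your version, injectivity enters through its pointwise form \cref{lem:skula-density-equable}, which you transport to $H$. You then also need monotonicity of $h_2$ and the fact that the specialization order of $\sigma(Z)$ is the order of $Z$.

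The paper's version is shorter and needs neither monotonicity nor points below $y$. Yours makes the underlying mechanism explicit. Your Step~1 says every $y\in H$ lies in the Scott closure in $H$ of $\kappa[D]\cap{\downarrow}y$. This is equivalent to injectivity of $\kappa^{-1}$ on $\sigma(H)$. It is also the analogue, for $\kappa[D]\subseteq H$, of the support condition in \cref{def:trace-support}.
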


\begin{proof}
Let \(W\in\sigma(Z)\). Since \(H\le_\Sigma \mathbf{L_2}D\), there exist \(V_i\in\sigma(\mathbf{L_2}D)\) such that
\[
  h_i^{-1}(W)=j^{-1}(V_i),
  \  i=1,2.
\]
Then
\[
  \lambda_D^{-1}(V_i)=\kappa^{-1}h_i^{-1}(W).
\]
The assumption \(h_1\circ\kappa=h_2\circ\kappa\), together with injectivity of \(\lambda_D^{-1}\), gives \(V_1=V_2\). Hence \(h_1^{-1}(W)=h_2^{-1}(W)\) for every Scott-open \(W\), and the \(T_0\) property implies \(h_1=h_2\).
\end{proof}

\subsection{The Keimel-Lawson conditions}

\begin{theorem}\label{thm:KL}
Let \(\mathcal K\subseteq\DCPO\) be a full subcategory. Then \(\mathcal K\) is a non-discrete reflective full subcategory of \(\DCPO\) if and only if the following conditions hold.
\begin{enumerate}[label=\textup{(KL\arabic*)}]
\item \(\Rtwo\subseteq\mathcal K\).
\item For every dcpo \(D\), the hull
\( \langle\lambda_D[D]\rangle_{\mathbf{L_2}D}^{\mathcal K}
\) exists.
\item For every \(Y\in\mathcal K\), every dcpo \(D\), and every Scott-continuous map \(g:\mathbf{L_2}D\to \mathbf{L_2}Y\),
\[
  g(\lambda_D[D])\subseteq\lambda_Y[Y]
\]
implies
\[
  \langle\lambda_D[D]\rangle_{\mathbf{L_2}D}^{\mathcal K}
  \subseteq g^{-1}(\lambda_Y[Y]).
\]
\end{enumerate}
\end{theorem}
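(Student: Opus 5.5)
The plan is to prove the two implications separately. Throughout, write \(\lambda=\lambda_D:D\to\mathbf{L_2}D\). By \cref{lem:Gamma-unit,lem:open-embedding}, \(\lambda_Y\) is a dcpo isomorphism onto the sub-dcpo \(\lambda_Y[Y]\) for every dcpo \(Y\). Consequently, any Scott-continuous map landing in \(\lambda_Y[Y]\) can be composed with \(\lambda_Y^{-1}\) and stays Scott continuous.

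\emph{Necessity.} Let \(\mathcal K\) be non-discrete and reflective, with unit \(\kappa_D:D\to\mathbf KD\).
\begin{enumerate}[label=\textup{(\arabic*)}]
\item Condition (KL1) is \cref{cor:minimal}.
\item Since \(\two\in\mathcal K\), the argument in the proof of \cref{thm:replete-hull} shows \(\kappa_D\in E_{\two}\).
\item Because \(\mathbf{L_2}D\in\Rtwo=E_\two^\perp\), the map \(\kappa_D\) is \(\mathbf{L_2}D\)-equable. So there is a unique Scott-continuous \(f:\mathbf KD\to\mathbf{L_2}D\) with \(f\kappa_D=\lambda\).
\item Exactly as in the proof of \cref{lem:admissible-contained-Tlambda}, \(f^{-1}=(\kappa_D^{-1})^{-1}\circ\lambda^{-1}\) is an isomorphism on Scott-open lattices. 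Hence \(f\in E_\two\), and \cref{lem:open-embedding} makes \(f\) an isomorphism onto the sub-dcpo \(H:=f[\mathbf KD]\).
\item \(H\) is a Scott subspace. If \(O\) is Scott open in \(H\), then \(f^{-1}(O)\in\sigma(\mathbf KD)\) equals \(f^{-1}(V)\) for some \(V\in\sigma(\mathbf{L_2}D)\), and then \(O=V\cap H\).
\item Thus \(H\le_{\mathcal K}\mathbf{L_2}D\) and \(\lambda[D]\subseteq H\).
\end{enumerate}

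To get (KL2), show that \(H\) lies in every \(\mathcal K\)-subspace \(A\) containing \(\lambda[D]\); the intersection is then \(H\) itself, so the hull exists and equals \(H\).
\begin{enumerate}[label=\textup{(\arabic*)}]
\item Let \(\kappa':D\to A\) be the corestriction of \(\lambda\).
\item Since \(A\in\mathcal K\), the reflection gives \(\hat g:\mathbf KD\to A\) with \(\hat g\kappa_D=\kappa'\).
\item Then \(j_A\hat g\) and \(f\) are two maps \(\mathbf KD\to\mathbf{L_2}D\in\mathcal K\) agreeing after \(\kappa_D\).
\item Uniqueness in the reflection forces \(j_A\hat g=f\), so \(H\subseteq A\).
\end{enumerate}

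For (KL3), let \(Y\in\mathcal K\) and \(g:\mathbf{L_2}D\to\mathbf{L_2}Y\) with \(g(\lambda[D])\subseteq\lambda_Y[Y]\).
\begin{enumerate}[label=\textup{(\arabic*)}]
\item Put \(h=\lambda_Y^{-1}g\lambda:D\to Y\).
\item Extend \(h\) along the reflection to \(\hat h:\mathbf KD\to Y\).
\item Both \(gf\) and \(\lambda_Y\hat h\) are maps \(\mathbf KD\to\mathbf{L_2}Y\in\mathcal K\) agreeing after \(\kappa_D\). Uniqueness gives \(gf=\lambda_Y\hat h\).
\item Hence \(g(H)\subseteq\lambda_Y[Y]\).
\end{enumerate}

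\emph{Sufficiency.} Assume (KL1)--(KL3).
\begin{enumerate}[label=\textup{(\arabic*)}]
\item Put \(H=\langle\lambda[D]\rangle^{\mathcal K}_{\mathbf{L_2}D}\), which exists by (KL2), and let \(\kappa:D\to H\) be the corestriction of \(\lambda\). Then \(H\in\mathcal K\) and \(H\le_\Sigma\mathbf{L_2}D\).
\item Let \(Y\in\mathcal K\) and \(f:D\to Y\) be Scott continuous. Let \(g=\mathbf{L_2}f\) be the unique map with \(g\lambda=\lambda_Y f\); it exists because \(\mathbf{L_2}Y\in\Rtwo\). Then \(g(\lambda[D])\subseteq\lambda_Y[Y]\).
\item By (KL3), \(g(H)\subseteq\lambda_Y[Y]\). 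So \(\tilde f:=\lambda_Y^{-1}\circ g|_H:H\to Y\) is Scott continuous, since \(g|_H\) is continuous on the sub-dcpo \(H\). It satisfies \(\tilde f\kappa=f\).
\item Uniqueness of \(\tilde f\) is exactly \cref{lem:restriction-unique}.
\item Hence \(\kappa\) is a \(\mathcal K\)-reflection unit. \(\mathcal K\) is non-discrete because it contains \(\two\in\Rtwo\).
\end{enumerate}

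The step expected to be the main obstacle is the necessity of (KL2): identifying the hull with the image of the canonical comparison \(f:\mathbf KD\to\mathbf{L_2}D\). This needs two things. First, \(\kappa_D\) must be \(\two\)-equable, which relies on \(\two\in\mathcal K\). Second, the bijectivity of \(f^{-1}\) on opens must yield both that \(f\) is an embedding and that its image carries the subspace Scott topology. Everything else is a formal consequence of the uniqueness clauses of the two reflections.
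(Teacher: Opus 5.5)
Your proposal is correct and follows essentially the paper's proof. In the necessity direction you identify the hull with the image of the comparison map $\mathbf{K}D\to\mathbf{L_2}D$ (the paper's $m_D$), show it is a Scott-subspace embedding via \cref{lem:open-embedding}, and get minimality from the uniqueness clause of the reflection; (KL3) and the sufficiency direction, with uniqueness from \cref{lem:restriction-unique}, run exactly as in the paper. The differences are cosmetic: you obtain the comparison map from $\mathbf{L_2}D$-equability of $\kappa_D$ rather than from $\mathbf{L_2}D\in\mathcal K$, and you omit the paper's enriched-order check, which is not needed for ordinary reflectivity (cf.\ \cref{rem:2.7}).
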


\begin{proof}
Assume first that \emph{(KL1)--(KL3)} hold. Put
\[
  K_D=\langle\lambda_D[D]\rangle_{\mathbf{L_2}D}^{\mathcal K},
\]
and let \(j_D:K_D\hookrightarrow \mathbf{L_2}D\) be the inclusion. Since \(\lambda_D[D]\subseteq K_D\), there is a map \(\kappa_D:D\to K_D\) with
\(j_D\circ\kappa_D=\lambda_D\). It is Scott continuous. Indeed, if
\(W\in\sigma(K_D)\), then \(W=j_D^{-1}(V)\) for some
\(V\in\sigma(\mathbf{L_2}D)\), and
\(\kappa_D^{-1}(W)=\lambda_D^{-1}(V)\).

Let \(Y\in\mathcal K\) and \(f:D\to Y\). The \(\Rtwo\)-reflection gives a unique Scott-continuous map \(\bar f:\mathbf{L_2}D\to \mathbf{L_2}Y\) satisfying
\[
  \bar f\circ\lambda_D=\lambda_Y\circ f.
\]
Thus \(\bar f(\lambda_D[D])\subseteq\lambda_Y[Y]\). By \emph{(KL3)},
\[
  \bar f(K_D)\subseteq\lambda_Y[Y].
\]
The image \(\lambda_Y[Y]\) is a sub-dcpo of \(\mathbf{L_2}Y\), so the
corestriction \(\bar f|_{K_D}:K_D\to\lambda_Y[Y]\) is Scott
continuous. Moreover, \cref{lem:open-embedding} shows that
\(\lambda_Y:Y\to\lambda_Y[Y]\) is a dcpo isomorphism. Hence there is a
unique Scott-continuous map \(\widetilde f:K_D\to Y\) such that
\(\lambda_Y\circ\widetilde f=\bar f|_{K_D}\).
Injectivity of \(\lambda_Y\) gives \(\widetilde f\circ\kappa_D=f\), and uniqueness follows from \cref{lem:restriction-unique}.

We must also verify the enriched condition. If \(f\le g\) in \([D\to Y]\), then reflectivity of \(\mathbf{L_2}\) gives \(\bar f\le\bar g\). Restriction to \(K_D\) preserves this inequality, and the inverse of the order isomorphism \(\lambda_Y:Y\to\lambda_Y[Y]\) is monotone. Hence
\[
  \widetilde f\le\widetilde g.
\]
Thus the unique-extension operator \([D\to Y]\to[K_D\to Y]\) is order-preserving, so
\[
  \kappa_D^*:[K_D\to Y]\cong[D\to Y]
\]
is an isomorphism of dcpos.

Conversely, assume that \(\mathcal K\) is a non-discrete enriched reflective full subcategory of \(\DCPO\), with unit \(\kappa_D:D\to K_D\). By \cref{lem:retract} and closure under retracts, \(2\in\mathcal K\). Hence \(\Rtwo\subseteq\mathcal K\) by \cref{thm:replete-hull}, proving \emph{(KL1)}.

Since \(\mathbf{L_2}D\in\mathcal K\), the map \(\lambda_D\) factors uniquely as
\[
  D\xrightarrow{\kappa_D}K_D\xrightarrow{m_D}\mathbf{L_2}D.
\]
Because \(2\in\mathcal K\), we have an isomorphism of dcpos
\[
  \kappa_D^{-1}:\sigma(K_D)\cong\sigma(D).
\]
Together with \(\lambda_D^{-1}=\kappa_D^{-1}\circ m_D^{-1}\), this
implies that \(m_D^{-1}\) is an order isomorphism. By
\cref{lem:open-embedding}, \(m_D:K_D\to m_D[K_D]\) is a dcpo
isomorphism. Since \(\mathcal K\) is replete, \(m_D[K_D]\in\mathcal K\).
Moreover, the surjectivity of \(m_D^{-1}:\sigma(\mathbf{L_2}D)\to\sigma(K_D)\),
together with the isomorphism \(K_D\cong m_D[K_D]\), shows that
\(m_D[K_D]\le_\Sigma \mathbf{L_2}D\). Thus \(m_D[K_D]\) is a
\(\mathcal K\)-subspace of \(\mathbf{L_2}D\).

Let \(A\le_\mathcal K \mathbf{L_2}D\) and suppose \(\lambda_D[D]\subseteq A\). Then \(\lambda_D\) factors through \(A\). By reflectivity, there is a map \(u:K_D\to A\), and uniqueness gives
\[
  i_A\circ u=m_D.
\]
Hence \(m_D[K_D]\subseteq A\). Thus
\[
  m_D[K_D]
  =\langle\lambda_D[D]\rangle_{\mathbf{L_2}D}^{\mathcal K},
\]
which proves \emph{(KL2)}.

Finally, let \(g:\mathbf{L_2}D\to \mathbf{L_2}Y\) satisfy
\[
  g(\lambda_D[D])\subseteq\lambda_Y[Y].
\]
The map \(g\circ\lambda_D\) has image in \(\lambda_Y[Y]\). Its
corestriction to \(\lambda_Y[Y]\) is Scott continuous, so define
\(f=\lambda_Y^{-1}\circ(g\circ\lambda_D):D\to Y\). Extend \(f\) uniquely
to a Scott-continuous map \(\widetilde f:K_D\to Y\). The maps \(g\circ m_D\) and \(\lambda_Y\circ\widetilde f\) agree after precomposition with \(\kappa_D\), so uniqueness of the \(\mathcal K\)-reflection gives
\[
  g\circ m_D=\lambda_Y\circ\widetilde f.
\]
Therefore
\[
  g(m_D[K_D])\subseteq\lambda_Y[Y],
\]
which is exactly \emph{(KL3)}.
\end{proof}

\begin{figure}[ht]
\centering
\begin{tikzcd}[column sep=large,row sep=large]
D \arrow[r,"\kappa_D"] \arrow[d,"\lambda_D"']
  & K_D \arrow[d,"m_D"] \arrow[dr,dashed,"\widetilde f"] & \\
\mathbf{L_2}D \arrow[r,"g"'] & \mathbf{L_2}Y & Y \arrow[l,"\lambda_Y"']
\end{tikzcd}
\caption{The central diagram in the relative hull criterion.}
\label{fig:KL}
\end{figure}

\section*{Acknowledgment}
During the preparation of this manuscript, the authors used AI-assisted tools for language polishing and grammar checking. The authors carefully reviewed and verified the final manuscript and take full responsibility for its content, including the correctness of all mathematical statements, proofs, and references.


\begin{thebibliography}{99}

\bibitem{bks2014}
I.~Battenfeld, K.~Keimel, and T.~Streicher,
\newblock Observationally-induced algebras in domain theory,
\newblock \emph{Logical Methods in Computer Science} \textbf{10}(3:18), 2014.
DOI: \href{https://doi.org/10.2168/LMCS-10(3:18)2014}{10.2168/LMCS-10(3:18)2014}.

\bibitem{Ershov2022}
Y.~Ershov,
\newblock K-completions of T0 spaces,
\newblock \emph{Algebra and Logic} \textbf{61}(3), 177--187, 2022.

\bibitem{ershov1999}
Y.~L.~Ershov,
\newblock On d-spaces,
\newblock \emph{Theoretical Computer Science} \textbf{224} (1999), 59--72.

\bibitem{gierz2003}
G.~Gierz, K.~H.~Hofmann, J.~D.~Lawson, M.~Mislove, and D.~S.~Scott,
\newblock \emph{Continuous Lattices and Domains},
\newblock Encyclopedia of Mathematics and its Applications, vol.~93,
Cambridge University Press, Cambridge, 2003.

\bibitem{goubault2013}
J.~Goubault-Larrecq,
\newblock \emph{Non-Hausdorff Topology and Domain Theory: Selected Topics in Point-Set Topology},
\newblock New Mathematical Monographs, vol.~22,
Cambridge University Press, Cambridge, 2013.

\bibitem{HoZhao2009}
W.~K.~Ho and D.~Zhao,
\newblock Lattices of Scott-closed sets,
\newblock \emph{Commentationes Mathematicae Universitatis Carolinae}
\textbf{50}(2) (2009), 297--314.

\bibitem{ho2018}
W.~K.~Ho, J.~Goubault-Larrecq, A.~Jung, and X.~Xi,
\newblock The Ho--Zhao problem,
\newblock \emph{Logical Methods in Computer Science}
\textbf{14}(1:7), 2018.
DOI: \href{https://doi.org/10.23638/LMCS-14(1:7)2018}{10.23638/LMCS-14(1:7)2018}.

\bibitem{Keimel2009}
K.~Keimel and J.~D.~Lawson,
\newblock D-completions and the d-topology,
\newblock \emph{Annals of Pure and Applied Logic}
\textbf{159}(3), 292--306, 2009.

\bibitem{miao2024}
H.~Miao, H.~Hou, X.~Jia, and Q.~Li,
\newblock The category of well-filtered dcpos is not \(\Gamma\)-faithful,
\newblock arXiv:2409.01546, 2024.

\bibitem{Riehl2017}
E.~Riehl,
\newblock \emph{Category Theory in Context},
\newblock Courier Dover Publications, Mineola, 2016.

\bibitem{sc-4}
D.~S.~Scott,
\newblock Domains for denotational semantics,
\newblock in: M.~Nielsen and E.~M.~Schmidt (eds.),
\emph{Automata, Languages and Programming},
LNCS 140, Springer, 1982, 577--613.
DOI: \href{https://doi.org/10.1007/BFb0012801}{10.1007/BFb0012801}.

\bibitem{Shen2019}
C.~Shen, X.~Xi, X.~Xu, and D.~Zhao,
\newblock On well-filtered reflections of T0 spaces,
\newblock \emph{Topology and its Applications}
\textbf{267}, 106869, 2019.

\bibitem{Shen2024}
C.~Shen, X.~Xi, and D.~Zhao,
\newblock The reflectivity of some categories of T0 spaces in domain theory,
\newblock \emph{Rocky Mountain Journal of Mathematics}
\textbf{54}(4), 1149--1166, 2024.

\bibitem{Wu2019}
G.~Wu, X.~Xi, X.~Xu, and D.~Zhao,
\newblock Existence of well-filterification,
\newblock \emph{Topology and its Applications}
\textbf{267}, 107044, 2019.

\bibitem{Wyler1981}
U.~Wyler,
\newblock Dedekind complete posets and Scott topologies,
\newblock in: Lecture Notes in Mathematics, vol.~871,
Springer-Verlag, Berlin, 1981, 384--389.

\bibitem{Xu2020}
X.~Xu,
\newblock A direct approach to k-reflections of T0 spaces,
\newblock \emph{Topology and its Applications}
\textbf{272}, 107076, 2020.

\bibitem{XuZhao2019}
L.~Xu and D.~Zhao,
\newblock $C_{\sigma}$-unique dcpos and non-maximality of the class of
dominated dcpos regarding \(\Gamma\)-faithfulness,
\newblock \emph{Electronic Notes in Theoretical Computer Science}
\textbf{345} (2019), 249--260.
DOI: \href{https://doi.org/10.1016/j.entcs.2019.07.027}{10.1016/j.entcs.2019.07.027}.

\bibitem{ZhaoXu2018}
D.~Zhao and L.~Xu,
\newblock Uniqueness of directed complete posets based on Scott closed set lattices,
\newblock \emph{Logical Methods in Computer Science}
\textbf{14}(2:10) (2018), 1--12.

\end{thebibliography}
\end{document}